\numberwithin{equation}{section}
\newtheorem{theorem}{Theorem}[section]
\newtheorem{proposition}[theorem]{Proposition}
\newtheorem{lemma}[theorem]{Lemma}
\newtheorem{corollary}[theorem]{Corollary}
\newtheorem{Definition}[theorem]{Definition}
\newtheorem{Remark}[theorem]{Remark}
\newenvironment{remark}{\begin{Remark}\rm}{\end{Remark}}
\newcommand{\C}{\mathbb{C}}
\newcommand{\N}{\mathbb{N}}
\newcommand{\R}{\mathbb{R}}
\newcommand{\RR}{\mathcal{R}}
\renewcommand{\Re}{{\rm Re} \,}
\renewcommand{\Im}{{\rm Im} \,}
\def\diag{\mathop{\mathrm{diag}}\nolimits}
\def\Ai{\mathop{\mathrm{Ai}}\nolimits}
\def\det{\mathop{\mathrm{det}}\nolimits}
\def\supp{\mathop{\mathrm{supp}}\nolimits}
\def\Tr{\mathop{\mathrm{Tr}}\nolimits}
\begin{document}
\title{Non-intersecting squared Bessel paths
and multiple orthogonal polynomials for modified Bessel weights}
\author{A.B.J.\ Kuijlaars, A.\ Mart\'{\i}nez-Finkelshtein, and F.\ Wielonsky}
\date{}

\maketitle

\begin{abstract}
We study a model of $n$ non-intersecting squared Bessel processes
in the confluent case: all paths start at time $t = 0$ at the
same positive value $x = a$, remain positive, and are conditioned to end at time $t
= T$ at $x = 0$. In the limit $n \to \infty$, after appropriate
rescaling, the paths fill out a region in the $tx$-plane that we describe
explicitly.
In particular, the paths initially stay away from the hard edge at $x
= 0$, but at a certain critical time $t^*$ the smallest paths hit the
hard edge and from then on are stuck to it. For $t \neq t^*$
we obtain the usual scaling limits from random matrix theory, namely
the sine, Airy, and Bessel kernels.
 A key fact is that the
positions of the paths at any time $t$ constitute a multiple orthogonal polynomial
ensemble, corresponding to a system of two modified
Bessel-type weights. As a consequence, there is a $3 \times 3$ matrix valued
Riemann-Hilbert problem characterizing this model, that we analyze in the large $n$ limit
using the Deift-Zhou steepest descent method. There are some novel
ingredients in the Riemann-Hilbert analysis that are of independent interest.
\end{abstract}

\section{Introduction}

Determinantal point processes are of considerable current interest in
probability theory and mathematical physics, since they arise
naturally in random matrix theory, non-intersecting paths, certain combinatorial
and stochastic growth models and representation theory of large groups,
see e.g.\ Deift \cite{De}, Johansson \cite{Jo1},
Katori and Tanemura \cite{KT1},
Borodin and Olshanski \cite{BO}, and many other papers cited therein.
See also the surveys of Soshnikov \cite{So}, K\"onig \cite{Ko}, Hough et al.\ \cite{HKPV},
and Johansson \cite{Jo2}.

A determinantal point process  is characterized by a
correlation kernel $K$  such that for every $m$ the
$m$-point correlation function (or joint intensities) takes
the determinantal form
\[  \det \left[ K(x_j,x_k)\right]_{j,k=1,\ldots,m} \]
We will only consider determinantal point processes on $\mathbb R$.

As pointed out by Borodin \cite{Bo} certain determinantal point processes
arise as biorthogonal ensembles, i.e., joint probability density functions
on $\mathbb R^n$ of the form
\begin{equation} \label{biorthogonal}
    \mathcal P(x_1, \ldots, x_n) = \frac{1}{Z_n} \det[ f_j(x_k)]_{j,k=1, \ldots,n} \det[g_j(x_k)]_{j,k=1, \ldots, n}
    \end{equation}
for certain given functions $f_1, \ldots, f_n$, and $g_1, \ldots, g_n$.
The correlation kernel is then given by
\begin{equation} \label{biorthogonalkernel}
    K(x,y) = \sum_{j=1}^n \phi_j(x) \psi_j(y)
    \end{equation}
where $\phi_j$, $\psi_j$, $j=1,\ldots, n$ are such that
\[ \textrm{span}\{\phi_1, \ldots, \phi_n\} = \textrm{span}\{f_1, \ldots, f_n\}, \qquad
   \textrm{span}\{\psi_1, \ldots, \psi_n\} = \textrm{span}\{g_1, \ldots, g_n\} \]
and they have the biorthogonality property
\[ \int_{\mathbb R} \phi_j(x) \psi_k(x) \, dx = \delta_{j,k}. \]

The joint probability distribution function for the eigenvalues of unitary
invariant ensembles of random Hermitian matrices
$ (1/\widetilde{Z}_n) e^{- \Tr V(M)} dM$
has the form \eqref{biorthogonal} where
\begin{equation} \label{fjgjfromRMT}
    f_j(x) = g_j(x) = x^{j-1} e^{-\frac{1}{2} V(x)}, \qquad j=1,2, \ldots, n.
    \end{equation}
Orthogonalizing the functions \eqref{fjgjfromRMT} leads to
\[ \phi_j(x) = \psi_j(x) = p_{j-1}(x) e^{-\frac{1}{2} V(x)}, \qquad j=1,2, \ldots, n,
\]
where $p_{j-1}$ is the orthonormal polynomial of degree $j-1$ with respect to
the weight $e^{-V(x)}$ on $\mathbb R$. The kernel \eqref{biorthogonalkernel}
is then the orthogonal polynomial kernel, also called the Christoffel-Darboux kernel
because of the Christoffel-Darboux formula for orthogonal polynomials,
and the ensemble is called an orthogonal polynomial ensemble \cite{Ko}.

Other examples for biorthogonal ensembles arise in the context
of non-intersecting paths as follows.
Consider a one-dimensional diffusion process $X(t)$ (i.e., a strong Markov process on $\mathbb R$
with continuous sample paths) with transition probability functions
$p_t(x,y)$, $t > 0$, $x,y \in \mathbb R$. Take  $n$ independent copies
$X_j(t)$, $j=1, \ldots, n$, conditioned so that
\begin{itemize}
\item $X_j(0) = a_j$, $X_j(T) = b_j$, where $T > 0$, and $a_1 < a_2 < \cdots < a_n$,
    $b_1 < b_2 < \cdots < b_n$ are given values,
\item the paths do not intersect for $0 < t < T$.
\end{itemize}
It then follows from a remarkable theorem of Karlin and McGregor \cite{KM}
that the positions of the paths at any given time $t \in (0,T)$ have
the joint probability density \eqref{biorthogonal} with functions
\begin{equation*} 
    f_j(x) = p_t(a_j,x), \qquad g_j(x) = p_{T-t}(x,b_j),
    \qquad j=1,\ldots, n.
\end{equation*}
[Properly speaking the joint probability density function is first defined
for ordered $n$-tuples $x_1 < x_2 < \cdots < x_n$ only. It is extended
in a symmetric way to all of $\mathbb R^n$.]

An important feature of determinantal point processes is that
they seem to have universal limits. By now, this is well-established
for the eigenvalue distributions of unitary random matrix ensembles.
Indeed if $K_n$ is the eigenvalue correlation kernel for
the random matrix ensemble (note the $n$-dependence of the potential)
\[ \frac{1}{\widetilde{Z}_n} e^{- n \Tr V(M)} dM \]
then we have under mild assumptions on $V$ that
\[ \lim_{n \to \infty} \frac{1}{n} K_n(x,x) =: \rho(x) \]
exists. In addition if $V$ is real analytic, and if $x^*$
is in the bulk of the spectrum (i.e., $\rho(x*) > 0$), then (see \cite{DKMVZ1})
\begin{equation} \label{sinekernel}
    \lim_{n \to \infty} \frac{1}{n \rho(x^*)}
        K_n\left(x^* + \frac{x}{n \rho(x^*)}, x^* + \frac{y}{n \rho(x^*)} \right)
    = \frac{\sin \pi(x-y)}{\pi(x-y)}.
\end{equation}
Universality of local eigenvalue statistics is expressed
by \eqref{sinekernel} in the sense that the sine kernel arises
as the limit regardless of $V$ and $x^*$. The universality \eqref{sinekernel}
is extended in many ways and (as its name suggests) under very mild assumptions (see the recent works \cite{L1, L2}).

The limit \eqref{sinekernel} does not hold at special points $x^*$ of
the spectrum where $\rho(x^*) =0$.
However it turns out that $K_n$ has scaling limits at such special points
that are determined by the macroscopic nature of $x^*$, and in that sense
they are again universal (see e.g.\ \cite{CK1, CK2, CKV, CV, DG}).

It is reasonable to expect that such universal limit results hold generically for
non-intersecting paths as well, although results are more sparse. For recent progress related to
discrete random walks,
random tilings and random matrices with external source see \cite{ABK3, BKMM, BaSu, BK1, BK2, BK4, OR, TW1}.

It is the aim of this paper to study a model of $n$ non-intersecting squared
Bessel processes in the limit $n \to \infty$. Recall that if $\{ \mathbf X(t):\, t\geq 0\}$ is
a $d$-dimensional Brownian motion, then the diffusion process
$$
R(t) = \|\mathbf X(t)\|_2 = \sqrt{ X_1(t)^2 + \dots + X_d(t)^2},\quad t\geq 0,
$$
is the \emph{Bessel process} with parameter $\alpha = \frac{d}{2}-1  $,   while $R^2(t)$ is the
\emph{squared Bessel process} usually denoted by BESQ$^d$ (see e.g. \cite[Ch.\ 7]{KT}, \cite{KO}).
These are an important family of diffusion processes which have applications in finance and other areas.
The well known Cox-Ingersoll-Ross (CIR) model in finance describing the  short term evolution of interest
rates or different models of the growth optimal portfolio (GOP) represent important examples of  squared
Bessel processes \cite{GJY, P}. The Bessel process $R(t) $ for $d=1$ reduces to the Brownian motion
reflected at the origin, while for $d=3$ it is connected with the Brownian motion absorbed at the
origin \cite{KT0, KT1}.

A system of $n$ particles performing BESQ$^d$ conditioned
never to collide with each other and conditioned to start and end
at the origin, can be realized as a process of
eigenvalues of a hermitian matrix-valued diffusion process, known
as the \emph{chiral} or \emph{Laguerre ensemble}, see e.g.
\cite{DF, KIK, KO, TW2} and below. In this paper we consider the case
where all particles start at the same positive value $a > 0$ and end at
$0$. Of particular interest here is the
interaction of the non-intersecting
paths with the hard edge at $0$. 
Due to the nature of the squared Bessel process, the paths starting at a positive value remain
positive, but they are conditioned to end at time $T$ at $0$.
After appropriate rescaling we will see that in the limit $n \to \infty$
the paths fill out a region in the $tx$-plane. The paths start at $t=0$
and initially stay away from the hard edge at $x=0$. At a certain critical time
the smallest paths hit the hard edge and from then on are stuck to it.
The phase transition at the critical time is a new feature of the present
model. It is a new soft-to-hard edge transition.

We are able to analyze the model in great detail since in the confluent case
the biorthogonal ensemble reduces
to a multiple orthogonal polynomial ensemble, as we will show in
Subsection \ref{section2} below.
The correlation kernel for the multiple orthogonal polynomial ensemble
is expressed via a  $3\times 3$ matrix-valued Riemann-Hilbert (RH) problem
\cite{BK1, DK1}.

We analyze the RH problem in the large $n$ limit using the Deift-Zhou
steepest descent method for RH problems \cite{DZ}.
There are some novel ingredients in our analysis which we feel are of
independent interest.
First of all, there is a first preliminary transformation which makes use of the explicit
structure of the RH jump matrix. It contains the modified Bessel functions
$I_{\alpha}$ and $I_{\alpha+1}$ and we use the explicit properties of Bessel functions.
A result of the first transformation is that a jump is created on the negative
real axis, see Section \ref{section3}.

The multiple orthogonal polynomials for modified Bessel functions were
studied before by Coussement and Van Assche \cite{CV1, CV2}. We use their results to make
an ansatz about an underlying Riemann surface that allows us to define
the second transformation in the steepest descent analysis in Section \ref{section4}.
The use of the Riemann surface is similar to what is done in \cite{BK2, KVAW}.
In the appendix we mention an alternative approach via equilibrium measures
and associated $g$-functions.
The further steps in the RH analysis follow the general scheme laid out
by Deift et al.\ \cite{DKMVZ1, DKMVZ2} in the context of orthogonal polynomials.
An important feature of
the present situation is that there is an unbounded cut along the negative
real axis and we have to deal with this technical issue in the construction
of the global parametrix in Section \ref{section6}. The construction of
the local parametrices at the hard edge $0$ also
presents a new technical issue, see Section \ref{section8}.

The main results of the paper are stated in the next section.

\section{Statement of results} \label{section2}

\subsection{Squared Bessel processes} \label{subsection21}

The transition probability density of a squared Bessel
process with parameter $\alpha > -1$ is given by (see \cite{BS, KO})
\begin{align} \label{pt}
   p_{t}^{\alpha} (x,y) & = \frac{1}{2t}\left(\frac{y}{x} \right)^{\alpha/2}
e^{- (x+y)/(2t)}
I_{\alpha}\left(\frac{\sqrt{xy}}{t} \right), &\quad x,y>0, \\
  p_{t}^{\alpha} (0,y) &= \frac{y^{\alpha}}{(2t)^{\alpha+1}\Gamma (\alpha+1) }e^{-y/(2t)}, & \quad  y>0,
 \label{ptx0}
\end{align}
where $I_{\alpha}$ denotes the modified Bessel function of the
first kind of order $\alpha$,
\begin{align} \label{Ialpha}
I_{\alpha} (z)=
    \sum_{k=0}^{\infty}\frac{(z/2)^{2k+\alpha}}{k! \, \Gamma(k+\alpha+1)}\, ;
\end{align}
see \cite[Section 9.6]{AS} for the main properties of the modified Bessel functions.
If $d = 2(\alpha + 1)$ is an integer, then the squared Bessel process can
be seen as the square of the distance to the origin of a $d$-dimensional
standard Brownian motion.

If the starting points $a_j$ and the endpoints $b_j$ are all different,
then (as explained in the introduction) the positions of the paths at a fixed
time $t \in (0,T)$
have a joint probability density
\begin{equation*}
\mathcal P_{n,t} (x_{1},\ldots,x_{n})=\frac{1}{Z_{n,t}}\det \left[p_{t}^{\alpha}
(a_{j},x_{k}) \right]_{j,k=1, \ldots,n}
\det \left[p_{T-t}^{\alpha}
(x_{j},b_{k}) \right]_{j,k=1, \ldots, n},
\end{equation*}
where $Z_{n,t}$ is the normalization constant such that
\begin{equation*}
\int_{( 0,\infty)^{n}} \mathcal P_{n,t} (x_{1},\ldots ,x_{n})dx_{1}\cdots dx_{n}=1.
\end{equation*}
This is a biorthogonal ensemble \eqref{biorthogonal} with functions
\[ f_j(x) = p_t^{\alpha}(a_j, x), \qquad g_j(x) = p_{T-t}^{\alpha}(x,b_j). \]

We are going to take the confluent limit $a_j \to a > 0$, and $b_j \to 0$.
Then the biorthogonal ensemble structure is preserved. In our first result
we identify the functions $f_j$ and $g_j$ for this situation.

\begin{proposition} \label{prop:coalesce}
    In the confluent limit $a_j \to a > 0$, $b_j \to 0$, $j=1, \ldots, n$,
    the positions of the non-intersecting squared Bessel paths at time $t \in (0,T)$ are a biorthogonal
    ensemble with functions
    \begin{align} \label{fjcoalesce1}
       f_{2j-1}(x) & = x^{j-1} p_t^{\alpha}(a,x), \qquad j =1, \ldots, n_1 := \lceil{n/2}\rceil, \\
       \label{fjcoalesce2}
       f_{2j}(x) & = x^{j-1} p_t^{\alpha+1}(a,x), \qquad j=1, \ldots, n_2 := n-n_1, \\
       \label{gjcoalesce}
       g_j(x) & = x^{j-1} e^{-\frac{x}{2(T-t)}}, \qquad j = 1, \ldots, n.
    \end{align}
\end{proposition}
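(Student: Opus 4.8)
The plan is to start from the biorthogonal structure already in hand for distinct endpoints — the positions at time $t$ form the ensemble \eqref{biorthogonal} with $f_j(x)=p_t^\alpha(a_j,x)$ and $g_j(x)=p_{T-t}^\alpha(x,b_j)$ — and to pass to the confluent limit $a_j\to a$, $b_j\to0$. Since a biorthogonal ensemble depends only on the two linear spans $\mathrm{span}\{f_1,\dots,f_n\}$ and $\mathrm{span}\{g_1,\dots,g_n\}$ (replacing the $f_j$ by a nonsingular linear combination merely rescales $Z_n$), it is enough to identify the limits of these spans. For the limit itself I would use the standard confluent-Vandermonde identity: for a smooth kernel $F$ the determinant $\det\left[F(a_j,x_k)\right]_{j,k}$ is antisymmetric in the $a_j$, hence divisible by $\prod_{i<j}(a_j-a_i)$, and
\[
  \lim_{a_1,\dots,a_n\to a}\frac{\det\left[F(a_j,x_k)\right]_{j,k=1,\dots,n}}{\prod_{i<j}(a_j-a_i)}
  =\det\left[\tfrac{1}{(j-1)!}\,\partial_a^{\,j-1}F(a,x_k)\right]_{j,k=1,\dots,n}.
\]
Applying this in the $a$-variables with $F=p_t^\alpha$ and, after a preliminary step described below, in the $b$-variables with $F=p_{T-t}^\alpha$, and noting that the Vandermonde factors $\prod_{i<j}(a_j-a_i)$ and $\prod_{i<j}(b_j-b_i)$ thus produced cancel against the identical factors carried by $Z_{n,t}$, one finds that the limiting ensemble is again of the form \eqref{biorthogonal}, now with $\mathrm{span}\{f_j\}=\mathrm{span}\{\partial_a^{\,j-1}p_t^\alpha(a,\cdot):j=1,\dots,n\}$ and $\mathrm{span}\{g_j\}$ equal to the corresponding span at the endpoints (the constants $1/(j-1)!$ drop out). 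Two span computations then remain.

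For the endpoint span, the series \eqref{Ialpha} gives
\[
  p_{T-t}^\alpha(x,b)=\frac{b^\alpha e^{-b/(2(T-t))}}{\bigl(2(T-t)\bigr)^{\alpha+1}}\,e^{-x/(2(T-t))}\,h(xb),
  \qquad h(u):=\sum_{k=0}^\infty\frac{u^k}{\bigl(4(T-t)^2\bigr)^k\,k!\,\Gamma(k+\alpha+1)},
\]
where the first factor depends on $b$ alone — so, like the Vandermonde, it cancels against $Z_{n,t}$ — and $h$ is entire with $h(0)=1/\Gamma(\alpha+1)\neq0$. Writing $h(xb)=\sum_\ell \tfrac{h^{(\ell)}(0)}{\ell!}\,x^\ell b^\ell$ and applying the Cauchy--Binet formula (equivalently, expanding the generalized Vandermonde determinants $\det[b_j^{\ell}]$ in Schur functions), the only term surviving division by $\prod_{i<j}(b_j-b_i)$ as $b_j\to0$ is the one built from the smallest exponents $\ell=0,1,\dots,n-1$; it produces $e^{-x/(2(T-t))}\,\mathrm{span}\{1,x,\dots,x^{n-1}\}$. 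Hence $g_j(x)=x^{j-1}e^{-x/(2(T-t))}$, which is \eqref{gjcoalesce}. The only point of care here is that $p_{T-t}^\alpha(x,0)$ is not obtained by naive differentiation in $b$ because of the $b^\alpha$ prefactor, which is exactly why one factors that prefactor out first.

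The starting-point span is the main step. Its engine is a pair of identities for the transition kernel, both immediate either from the series expansion of $p_t^\alpha$ or from the Bessel recurrences $\frac{d}{dz}\bigl(z^{-\alpha}I_\alpha(z)\bigr)=z^{-\alpha}I_{\alpha+1}(z)$ and $I_\alpha(z)-I_{\alpha+2}(z)=\frac{2(\alpha+1)}{z}I_{\alpha+1}(z)$ after the substitution $z=\sqrt{ax}/t$:
\[
  \partial_a p_t^\alpha(a,x)=\tfrac{1}{2t}\bigl(p_t^{\alpha+1}(a,x)-p_t^{\alpha}(a,x)\bigr),\qquad
  a\,p_t^{\alpha+2}(a,x)=x\,p_t^{\alpha}(a,x)-2(\alpha+1)t\,p_t^{\alpha+1}(a,x),
\]
which combine to $\partial_a p_t^{\alpha+1}=\frac{x}{2ta}\,p_t^{\alpha}-\bigl(\frac{\alpha+1}{a}+\frac{1}{2t}\bigr)p_t^{\alpha+1}$. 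Setting $u_j:=x^{j-1}p_t^{\alpha}(a,\cdot)$ and $v_j:=x^{j-1}p_t^{\alpha+1}(a,\cdot)$, so that \eqref{fjcoalesce1}--\eqref{fjcoalesce2} are precisely $f_{2j-1}=u_j$, $f_{2j}=v_j$, these identities read
\[
  \partial_a u_j\in\mathrm{span}\{u_j,v_j\},\qquad \partial_a v_j\in\mathrm{span}\{u_{j+1},v_j\},
\]
with the coefficient of $v_j$ in $\partial_a u_j$ equal to $\frac{1}{2t}\neq0$ and the coefficient of $u_{j+1}$ in $\partial_a v_j$ equal to $\frac{1}{2ta}\neq0$. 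An induction on $n$ then gives
\[
  \mathrm{span}\{\partial_a^{\,j-1}p_t^\alpha(a,\cdot):j=1,\dots,n\}
   =\mathrm{span}\bigl(\{u_j:1\le j\le\lceil n/2\rceil\}\cup\{v_j:1\le j\le\lfloor n/2\rfloor\}\bigr):
\]
the inclusion $\subseteq$ holds because the two relations above show $\partial_a$ carries the right-hand span for $n$ into the one for $n+1$ (the parities fit: $\lceil n/2\rceil=\lfloor(n+1)/2\rfloor$ and $\lfloor n/2\rfloor+1=\lceil(n+1)/2\rceil$), and $\supseteq$ holds because the non-vanishing of the two coefficients forces $\partial_a^{\,n-1}p_t^\alpha$ to have a nonzero component along the single basis vector by which the $n$-th span exceeds the $(n-1)$-st — namely $v_{n/2}$ when $n$ is even and $u_{(n+1)/2}$ when $n$ is odd — so the spans strictly grow. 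Since the right-hand side is exactly $\mathrm{span}\{f_1,\dots,f_n\}$ for the $f_j$ of \eqref{fjcoalesce1}--\eqref{fjcoalesce2}, this would complete the proof.

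I expect the main obstacle to be this last span computation — more precisely, recognizing that $\partial_a$ keeps the two-element family $\{p_t^\alpha,p_t^{\alpha+1}\}$ inside $\mathrm{span}\{p_t^\alpha,\,p_t^{\alpha+1},\,x\,p_t^\alpha\}$, i.e.\ pinning down the two kernel identities above and noticing that multiplication by $x$ is introduced exactly once, in $\partial_a v_j$; after that the induction is parity bookkeeping. A secondary point to settle is that the functions in \eqref{fjcoalesce1}--\eqref{gjcoalesce} are linearly independent, so that the limiting ensemble is non-degenerate: for the $g_j$ this is clear since $1,x,\dots,x^{n-1}$ are independent against the nowhere-vanishing factor $e^{-x/(2(T-t))}$, and for the $f_j$ a linear relation would force $p_t^\alpha(a,\cdot)/p_t^{\alpha+1}(a,\cdot)$ to be a rational function of $x$, which it is not.
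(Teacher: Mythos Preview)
Your proposal is correct and follows essentially the same approach as the paper: for the starting-point span you use the two differential relations $\partial_a p_t^{\alpha}=\tfrac{1}{2t}(p_t^{\alpha+1}-p_t^{\alpha})$ and $\partial_a p_t^{\alpha+1}=\tfrac{x}{2ta}p_t^{\alpha}-(\tfrac{\alpha+1}{a}+\tfrac{1}{2t})p_t^{\alpha+1}$ (the paper states the second of these in the equivalent form $a\,\partial_a p_t^{\alpha+1}=\tfrac{y}{2t}p_t^{\alpha}-(\tfrac{a}{2t}+\alpha+1)p_t^{\alpha+1}$) and run the same induction, and for the endpoint span you factor out $b^\alpha$ and read off the polynomial coefficients from the series for $I_\alpha$, exactly as the paper does. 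Your write-up is in fact more detailed than the paper's --- you spell out the confluent-Vandermonde limit, the parity bookkeeping in the induction, and the linear independence check, all of which the paper leaves implicit.
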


\begin{proof}
In the confluent limit $a_j \to a$,  the linear space spanned by the functions
$y \mapsto p_{t}^{\alpha} (a_{j}, y)$, $j=1, \ldots, n$,
tends to the linear space spanned by
\begin{equation} \label{linearspanf1}
    y \mapsto \frac{\partial^{j-1}}{\partial x^{j-1}} p_{t}^{\alpha} (a, y),
    \qquad j= 1, \ldots, n.
\end{equation}
Using the differential relations
satisfied by the transition probabilities,  (see e.g.\ \cite{AS} or \cite{CV1,CV2}):
\begin{align*}
\frac{\partial}{\partial x}  p_{t}^{\alpha} (x,y) & = \frac{1}{2t}
(p_{t}^{\alpha+1} (x,y)-p_{t}^{\alpha} (x,y)),
\\
x\frac{\partial }{\partial x} p_{t}^{\alpha+1}(x,y) & = \frac{y}{2t} p_{t}^{\alpha} (x,y)
- \left(\frac{x}{2t}+\alpha+1\right)p_{t}^{\alpha+1} (x,y),
\end{align*}
it is easily shown inductively,
that the linear span of \eqref{linearspanf1} is the same as the linear space spanned by
\begin{equation*} 
    \begin{aligned}
    &y \mapsto y^{j-1} p_t^{\alpha}(a, y), \qquad j = 1, \ldots, n_1, \\
    &y \mapsto y^{j-1} p_t^{\alpha+1}(a,y), \qquad j= 1, \ldots, n_2
    \end{aligned}
\end{equation*}
which are exactly the functions in \eqref{fjcoalesce1}, \eqref{fjcoalesce2}.

Next, the linear space spanned by the functions
$x \mapsto p_{T-t}^{\alpha}(x,b_j)$, $j=1, \ldots, n$, tends in the
confluent limit $b_j \to 0$ to the linear space spanned by the functions
\begin{equation} \label{linearspang1}
x \mapsto \frac{\partial^{j-1}}{\partial y^{j-1}} \left. \left[ y^{-\alpha} p_{T-t}^{\alpha} (x,y) \right] \right|_{y=0}.
\end{equation}
By \eqref{pt} and \eqref{Ialpha} we have that
\[ y^{-\alpha} p_{T-t}^{\alpha}(x,y)
    = \frac{1}{(2(T-t))^{\alpha+1}} e^{-(x+y)/(2(T-t))}
        \sum_{k=0}^{\infty} \frac{(xy)^k }{k!\,  \Gamma(k+\alpha+1)(2(T-t))^{2k}} \]
which is an entire function in $y$ of the form
\[ y^{-\alpha} p_{T-t}^{\alpha}(x,y) = e^{-\frac{x}{2(T-t)}}
    \sum_{k=0}^{\infty} P_k(x) y^k \]
where each $P_k(x)$ is a polynomial in $x$ of exact degree $k$.
Thus the linear space spanned by  the functions \eqref{linearspang1}
is equal to the linear space spanned by the functions \eqref{gjcoalesce}, which
completes the proof of the proposition.
\end{proof}

\begin{remark}
In the next subsection we will see how Proposition \ref{prop:coalesce} allows us
to identify the ensemble of non-intersecting squared Bessel paths at any time $ t \in (0,T)$
as a multiple orthogonal polynomial ensemble.
For the transition probability density of the (non-squared) Bessel process the
calculations as in the proof of Proposition \ref{prop:coalesce} would not work
and in fact the positions of non-intersecting Bessel paths are not a multiple
orthogonal polynomial ensemble.
This is the reason why we concentrate on squared Bessel paths.

Of course, by taking square roots we can transplant results on non-intersecting
squared Bessel paths to non-intersecting Bessel paths, see Remark \ref{rem:Besselpaths}
below.
\end{remark}

\subsection{Multiple orthogonal polynomial ensemble} \label{subsection22}

According to Proposition \ref{prop:coalesce} the biorthogonal
ensemble in the confluent case is an example of what we call a multiple
orthogonal polynomial ensemble. A multiple orthogonal polynomial
ensemble in general may involve  an arbitrary number of weights and
an arbitrary multi-index, but we will discuss here the case of
weight functions $\widehat{w}_0, \widehat{w}_1, \widehat{w}_2$ and a
multi-index $(n_1,n_2)$ where $n_1 + n_2 =n$ and $n_1 = \lceil n/2
\rceil$. We take functions
\[ f_{2j-1}(x) = x^{j-1} \widehat{w}_1(x), \qquad f_{2j}(x) = x^{j-1} \widehat{w}_2(x) \]
and
\[ g_j(x) = x^{j-1} \widehat{w}_0(x), \qquad j =1, \ldots, n, \]
and we use these functions for a biorthogonal ensemble \eqref{biorthogonal}.
Note that in the squared Bessel case, we have by Proposition \ref{prop:coalesce}
and \eqref{pt} that (where we drop irrelevant constants)
\begin{align} \label{w1hat}
    \widehat{w}_1(x) & = x^{\alpha/2} e^{-\frac{x}{2t}}
    I_{\alpha}\left(\frac{\sqrt{ax}}{t}\right) \\
    \label{w2hat}
    \widehat{w}_2(x) & = x^{(\alpha+1)/2} e^{-\frac{x}{2t}}
    I_{\alpha+1}\left(\frac{\sqrt{ax}}{t}\right) \\
    \label{w0}
    \widehat{w}_0(x) & = e^{-\frac{x}{2(T-t)}}
    \end{align}
The biorthogonalization process leads to bases $\phi_j$, $\psi_j$,
$j=1, \ldots, n$, and we may take them so that
\[ \phi_j(x) = A_{j-1,1}(x) \widehat{w}_1(x) + A_{j-1,2}(x) \widehat{w}_2(x), \qquad
    \psi_j(x) = B_{j-1}(x) \widehat{w}_0(x), \]
where $A_{j-1,1}$ and $A_{j-1,2}$ are polynomials
of degrees $\lceil (j-1)/2 \rceil$ and $\lfloor (j-1)/2 \rfloor$, respectively,
and $B_{j-1}$ is a monic polynomial of degree $j-1$.
The biorthogonality property is
\begin{equation} \label{biorthprop}
    \int \left( A_{j,1} w_1(x) + A_{j,2} w_2(x) \right) B_k(x) \, dx = \delta_{j,k},
    \qquad j,k = 0, \ldots, n-1, \end{equation}
where we have put
\begin{equation} \label{w1w2}
    w_1(x) = \widehat{w}_0(x) \widehat{w}_1(x), \qquad w_2(x) = \widehat{w}_0(x) \widehat{w}_2(x).
\end{equation}
The polynomials $A_{j,1}$ and $A_{j,2}$ satisfying \eqref{biorthprop}
are called multiple orthogonal polynomials of type I and the polynomials
$B_k$ are called multiple orthogonal polynomials of type II.
The correlation kernel
\[ \widehat{K}_n(x,y) = \sum_{j=1}^n \phi_j(x) \psi_j(y)
    = \sum_{j=0}^{n-1}  \left( A_{j,1} \widehat{w}_1(x) + A_{j,2} \widehat{w}_2(x) \right)
        B_j(y) \widehat{w}_0(y)  \]
is called a multiple orthogonal polynomial kernel.
We will use the equivalent form (it is equivalent since it gives
rise to the same $m$-point correlation functions)
\begin{equation} \label{kernelKn}
    K_n(x,y) = \frac{\widehat{w}_0(x)}{\widehat{w}_0(y)} \widehat{K}_n(x,y) =
    \sum_{j=1}^n  \left( A_{j,1} w_1(x) + A_{j,2} w_2(x) \right)
        B_j(y)  \end{equation}
which has a characterization through a RH problem, \cite{BK1, DK1}
\[ K_n(x,y) = \frac{1}{2\pi i(x-y)} \begin{pmatrix} 0 & w_1(y) & w_2(y) \end{pmatrix}
    Y_+^{-1}(y) Y_+(x) \begin{pmatrix} 1 \\ 0 \\ 0 \end{pmatrix} \]
where $Y$ is a solution of the following $3 \times 3$ matrix valued RH problem.
\begin{enumerate}
\item $Y$ is analytic in $\mathbb{C} \setminus \mathbb R$.
\item On the real axis, $Y$ possesses continuous boundary values $Y_+$ (from the upper half plane)
and $Y_-$ (from the lower half plane), and
\begin{equation}  \label{Yjump}
   Y_+(x) = Y_-(x) \begin{pmatrix}
                   1 & w_{1} (x)& w_{2} (x)\\
                   0 & 1 & 0 \\
                   0 & 0 & 1
                   \end{pmatrix}, \qquad x \in \mathbb{R}.
\end{equation}
\item $Y(z)$ has the following behavior at infinity:
\begin{equation}  \label{Yasym}
    Y(z) = \left( I + \mathcal{O}\left(\frac1z\right) \right)\begin{pmatrix}
            z^{n} & 0 & 0 \\
            0 & z^{-n_1} & 0 \\
            0 & 0 & z^{-n_2}
            \end{pmatrix} , \quad z
        \to \infty, \quad z\in \mathbb{C} \setminus \R.
\end{equation}
\end{enumerate}
If the weight functions are not defined on the whole real line (as it will be
for the non-intersecting squared Bessel paths: the case of interest in this paper), we have to
supplement the RH problem
with appropriate conditions at the endpoints. The RH problem is an extension
of the RH problem for orthogonal polynomials of Fokas, Its, and Kitaev \cite{FIK}
to multiple orthogonal polynomials due to Van Assche et al.\ \cite{VAGK}.

In this paper we have by \eqref{w1hat}, \eqref{w2hat}, \eqref{w0}, and \eqref{w1w2}
\begin{equation}
\begin{aligned} \label{weights1}
w_{1}(x) &= x^{\alpha/2} \exp\left(-\frac{Tx}{2t(T-t)}\right) I_{\alpha}\left(\frac{\sqrt{ax}}{t}
\right), \\
w_{2}(x) &=  x^{(\alpha+1)/2} \exp\left(-\frac{Tx}{2t(T-t)}\right) I_{\alpha+1}\left(\frac{\sqrt{ax}}{t}
\right).
\end{aligned}
\end{equation}
The weights are defined on $[0,\infty)$ so that the jump condition \eqref{Yjump}
only holds for $x \in \mathbb R_+$, and the RH problem \eqref{Yjump}, \eqref{Yasym}
is supplemented with the following endpoint condition.
\begin{enumerate} \setcounter{enumi}{3}
\item $Y(z)$ has the following  behavior near the origin, as $z\to 0$,
$z\in \C\setminus\R_+$,
\begin{equation}  \label{Yedge}
    Y(z) = \mathcal{O}
        \begin{pmatrix}
            1 & h(z) & 1 \\
            1 & h(z) & 1 \\
            1 & h(z) & 1
            \end{pmatrix},
        \mbox{ with }
h(z)=\left\{
        \begin{array}{ccc}
|z|^{\alpha}, & \mbox{ if } & -1<\alpha<0,\\
\log|z|, & \mbox{ if } & \alpha=0,\\
1, & \mbox{ if } & 0<\alpha.\\
            \end{array}\right.
\end{equation}
The $\mathcal{O}$ condition in \eqref{Yedge} is to
be taken entrywise.
\end{enumerate}

\subsection{Multiple orthogonal polynomials for modified Bessel weights}
\label{subsection23}

We are fortunate that the multiple orthogonal polynomials associated
with the weights \eqref{weights1} were studied before by
Coussement and Van Assche \cite{CV1,CV2}. They showed that
all polynomials $A_{j,1}$, $A_{j,2}$ and $B_k$ exist so that
the above RH problem has a unique solution and
\[ \det Y(z) \equiv 1, \qquad \mbox{ for } z\in \mathbb C \setminus \mathbb R_+. \]
In addition $B_k$ satisfies
interesting recurrence and differential relations which they
were able to identify explicitly.

The type II multiple orthogonal polynomials $B_k$ satisfy a
four term recurrence relation
\[ x B_k(x) = B_{k+1}(x) + b_k B_k(x) + c_k B_{k-1}(x) + d_k B_{k-2}(x) \]
with recurrence coefficients that are obtained from
\cite[Theorem 9]{CV2} after appropriate rescaling and
identification of parameters
\begin{align*}
    b_k & = \frac{a(T-t)^2}{T^2} + \frac{2t(T-t)}{T}(2k+ \alpha + 1), \\
    c_k & = \frac{4a t(T-t)^3}{T^3} k + \frac{4t^2(T-t)^2}{T^2} k (k+\alpha), \\
    d_k & = \frac{4a t^2 (T-t)^4}{T^4} k(k-1).
    \end{align*}
In addition $y = B_n$ is a solution of the third order differential equation \cite[Theorem 11]{CV2}
\begin{multline}  \label{DEforMOP}
    xy''' +  \left(- \frac{Tx}{t(T-t)} + \alpha + 2\right) y'' \\
    + \left(\frac{T^2}{4t^2(T-t)^2}x + \frac{(n-\alpha-2)T}{2t(T-t)} - \frac{a}{4t^2}\right) y'
    - \frac{nT^2}{4t^2(T-t)^2} y = 0.
    \end{multline}

\subsection{Time scaling and large $n$ limit} \label{subsection24}

We want to analyze the kernel $K_{n}$ from \eqref{kernelKn} in the large $n$ limit.
To obtain interesting results, we  make the time variable depend on the number $n$ of
paths. Hence, we rescale the time in an appropriate way, namely we
replace the variables $t$ and $T$
\[ t \mapsto \frac{t}{2n}, \qquad T \mapsto \frac{1}{2n}, \]
so that $0 < t < 1$.
Thus, the system of
weights \eqref{weights1} now becomes $n$-dependent
\begin{equation}
\begin{aligned}
\label{weights2}
w_{1}(x) = w_{1,n}(x) & = x^{\alpha/2}\exp\left(-\frac{nx}{t(1-t)}\right)I_{\alpha}\left(\frac{2n\sqrt{ax}}{t}\right), \\
w_{2}(x) = w_{2,n}(x) & = x^{(\alpha+1)/2}\exp\left(-\frac{nx}{t(1-t)}\right)I_{\alpha+1}\left(\frac{2n\sqrt{ax}}{t}
\right).
\end{aligned}
\end{equation}
Alternatively, we could have performed space scaling, putting $T=1$ and
replacing the position variable $x$ with $2nx$ and the starting position $a$ with $2na$.

After the change of time parameters $t \mapsto t/(2n)$, $T \mapsto 1/(2n)$ the differential
equation \eqref{DEforMOP} turns into (with $x$ replaced by $z$)
\begin{multline} \label{DEforMOP2}
  zy'''(z) + \left((2+\alpha) - \frac{2nz}{t(1-t)}\right) y''(z) \\
  +
   \left(\frac{n^2z}{t^2(1-t)^2}  + \frac{n(n-\alpha-2)}{t(1-t)} - \frac{an^2}{t^2}\right)y'(z)
   - \frac{n^3}{t^2(1-t)^2} y (z)=0,
\end{multline}
Expressing \eqref{DEforMOP2} in terms of the scaled logarithmic derivative
$\zeta=y'/(ny)$ and keeping only
the dominant terms with respect to $n$ as $n \to \infty$, we arrive at the algebraic
equation for $\zeta = \zeta(z)$,
\begin{equation}\label{RSequation4}
z\zeta^3 - \frac{2z}{t(1-t)} \zeta^2 +
    \left(\frac{z}{t^2(1-t)^2} + \frac{1}{t(1-t)} - \frac{a}{t^2} \right) \zeta - \frac{1}{t^2(1-t)^2}=0,
\end{equation}
which will play a central role in what follows.
By solving for $z$, it may be written as
\begin{equation} \label{RSequation3}
    z = \frac{1-k \zeta}{\zeta(1-t(1-t)\zeta)^2}, \qquad k = (1-t)(t-a(1-t)).
    \end{equation}

\begin{proposition} \label{prop:RiemannSurface}
For every $t \in (0,1)$ the three-sheeted Riemann surface associated
with \eqref{RSequation3} has four  branch points at $0$, $\infty$, $p$ and $q$ with $p < q$.
There is a critical time
\begin{equation*} 
    t^* = \frac{a}{a+1} \in (0,1)
    \end{equation*}
such that
\begin{description}
\item[\rm Case 1:] for $t < t^*$  we have $0 < p < q$,
\item[\rm Case 2:] for $t > t^*$ we have $p < 0 < q$,
\item[\rm Case 3:] for $t = t^*$ we have $p= 0 < q$.
\end{description}
\end{proposition}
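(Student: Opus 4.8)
The plan is to read \eqref{RSequation3} as a rational map $\zeta\mapsto z(\zeta)$ of the $\zeta$-sphere onto the $z$-sphere: its numerator has degree $1$ and its denominator degree $3$, so the map has degree $3$, the surface in question is a three-sheeted branched cover of $\overline{\C}$, and its branch points are exactly the critical values of the map together with the points over which the $\zeta$-degree of the defining cubic drops. By Riemann--Hurwitz the total branching number is $2\cdot 3-2=4$, so the real task is to \emph{locate} the four branch points and follow their signs as $t$ varies. Throughout I abbreviate $u:=t(1-t)>0$ and keep $k=(1-t)\bigl((1+a)t-a\bigr)$, recording at the outset the two facts everything rests on: $k-u=-a(1-t)^2<0$, and the sign of $k$ equals that of $(1+a)t-a$, so $k>0$ precisely when $t>t^*=\frac{a}{a+1}$. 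Assume first that $k\neq 0$. From $z(\zeta)=\frac{1-k\zeta}{\zeta(1-u\zeta)^2}$ one reads off that $z(\zeta)$ has a simple pole at $\zeta=0$, a double pole at $\zeta=1/u$, and $z\sim-k/(u^2\zeta^2)$ as $\zeta\to\infty$; hence both $z=\infty$ and $z=0$ are square-root branch points (the ramification points above them being $\zeta=1/u$ and $\zeta=\infty$ respectively), each contributing branching number $1$, so the remaining branching number $2$ lies over finite nonzero values of $z$.

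To find the other branch points I would differentiate, obtaining the compact form
\[ z'(\zeta)=\frac{-\bigl(2uk\zeta^2-3u\zeta+1\bigr)}{\zeta^2(1-u\zeta)^3}, \]
so the finite critical points are the two roots $\zeta_1,\zeta_2$ of $Q(\zeta):=2uk\zeta^2-3u\zeta+1$, and $p,q$ are the associated critical values. The elementary point that makes everything work is $\mathrm{disc}\,Q=9u^2-8uk=u(1-t)\bigl((1-8a)t+8a\bigr)$: the affine function $(1-8a)t+8a$ is positive at $t=0$ and at $t=1$, hence on all of $[0,1]$, so $Q$ has two distinct real roots. Since $z(\cdot)$ is real on $\R$ away from its poles $\{0,1/u\}$, and $\zeta_1,\zeta_2\notin\{0,1/u,1/k\}$ (immediate from $Q(0)=1$, $Q(1/u)=2(k-u)/u$, $Q(1/k)=(k-u)/k$ and $k\neq u$), the critical values $p=z(\zeta_1)$ and $q=z(\zeta_2)$ are real, finite and nonzero; and $p\neq q$, because a critical value of $z(\cdot)$ is a value of $z$ at which the cubic \eqref{RSequation4} acquires a double root, and a cubic cannot have two distinct double roots. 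Relabelling $p<q$ and noting that the branching $1+1$ over $\{0,\infty\}$ together with the branching $1+1$ over $\{p,q\}$ already exhausts the total $4$, we conclude there are no further branch points, which proves the first assertion.

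For the signs I would pass to the quadratic factor of the $\zeta$-discriminant of the cleared cubic $u^2z\zeta^3-2uz\zeta^2+(z+k)\zeta-1=0$ equivalent to \eqref{RSequation4}, which a direct expansion shows to be
\[ 4(u-k)\,z^2+\bigl(36uk-8k^2-27u^2\bigr)z-4k^3=0, \]
whence $pq=\dfrac{k^3}{k-u}=-\dfrac{k^3}{a(1-t)^2}$ and $p+q=\dfrac{27u^2+8k^2-36uk}{4(u-k)}$. If $t>t^*$, then $k>0$, so $pq<0$ and therefore $p<0<q$ (Case 2). If $t<t^*$, then $k<0$, so $pq>0$; moreover $u-k=a(1-t)^2>0$ and $27u^2+8k^2-36uk>0$ (a sum of three positive terms when $u>0>k$), so $p+q>0$ as well, and hence $0<p<q$ (Case 1). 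At $t=t^*$, one has $k=0$ and $z(\zeta)=\frac{1}{\zeta(1-u\zeta)^2}$: there is now a single finite critical point $\zeta=1/(3u)$, with value $z\bigl(1/(3u)\bigr)=27u/4>0$, while $z\sim 1/(u^2\zeta^3)$ as $\zeta\to\infty$ shows that $0$ has become a cube-root (total) branch point; in the reading in which this confluence is recorded as $p\to0$ we get $p=0<q$ (Case 3).

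I expect the only genuinely nonroutine work to be computational: producing the tidy factored form of $z'(\zeta)$, which is what ties the finite branch points to the roots of $Q$, and reducing $\mathrm{disc}\,Q$, $k-u$, and the symmetric functions $pq$ and $p+q$ to manifestly signed expressions in $t$ and $a$. Everything else — the Riemann--Hurwitz count, the local analysis at $0$ and $\infty$, and the three-way case split — is immediate once those identities are in hand, and keeping the algebra organized around $u=t(1-t)$ and the abbreviation $k$ is what makes it tractable. One should also note a mild abuse in the statement: at $t=t^*$ the points $p$ and $0$ coincide, so "four branch points" there is to be read with $0$ counted as the limit of the two colliding branch points $0$ and $p$.
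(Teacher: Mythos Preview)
Your proof is correct and follows essentially the same route as the paper: the paper also identifies the finite branch points as the zeros of the $\zeta$-discriminant of the cubic (which it records as the boundary equation \eqref{boundaryequation}, equal to $z$ times your quadratic up to the positive factor $(1-t)^2$) and then reads off the three cases from the signs of the coefficients, together with a graphical discussion of the map $\zeta\mapsto z(\zeta)$. Your version is considerably more explicit---the Riemann--Hurwitz count, the clean factorization of $z'(\zeta)$, the check that the roots of $Q$ avoid $\{0,1/u,1/k\}$, and the symmetric-function sign analysis via $pq=-k^3/(a(1-t)^2)$---but these are natural elaborations of the paper's sketch rather than a different argument.
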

Note that the three cases  correspond to $k < 0$,  $k> 0$, and $k=0$, respectively,
where $k$ is the constant in \eqref{RSequation3}. The proof of Proposition \ref{prop:RiemannSurface}
follows from the discussion in Section \ref{section4}.

In this paper we are going to analyze Case 1 and Case 2. In order to handle the two
cases simultaneously, we shall denote the real branch points by $p_- < p_+ < q$,
where
\begin{equation*} 
    p_- = \min(0,p), \qquad p_+ = \max(0,p).
    \end{equation*}

Functions defined on the Riemann surface associated with \eqref{RSequation3} will play a major
role in the steepest descent analysis.
There is an alternative approach based on an equilibrium problem for
logarithmic potentials and so-called $g$-functions. We briefly
outline this approach in the appendix of this paper.

\subsection{Statement of results} \label{subsection25}

We state our results for the kernel \eqref{kernelKn},
\begin{equation} \label{kernel}
    K_n(x,y) = \frac{1}{2\pi i(x-y)} \begin{pmatrix} 0 & w_{1}(y) & w_{2}(y) \end{pmatrix}
    Y_+^{-1}(y) Y_+(x) \begin{pmatrix} 1 \\ 0 \\ 0 \end{pmatrix}
    \end{equation}
where $Y$ is the solution of the RH problem \eqref{Yjump}, \eqref{Yasym}, \eqref{Yedge}
with weights $w_1$ and $w_2$ as in \eqref{weights2}.
Note that $K_n$ depends on $a> 0$ and
$t \in (0,1)$. In the following $a$ will be fixed. To indicate the dependence on $t$
we occasionally write
\[ K_n(x,y) = K_n(x,y;t). \]
To emphasize the dependence of the branch points on $t$ we may write $p(t)$, $q(t)$,
$p_-(t)$, and $p_+(t)$.

\begin{theorem} 
\label{theo:domain}
Under the rescaling described above, the following hold.

For every $t \in (0,1)$, the limiting mean density of
the positions of the paths at time $t$
\begin{equation*} 
\rho(x) = \rho(x;t) = \lim_{n\to \infty} \frac{1}{n} K_n(x,x;t)
\end{equation*}
exists, and is supported on the interval $[p_+(t), q(t)] \subset [0, \infty)$.
The density $\rho$ satisfies
\begin{equation} \label{density}
\rho (x) = \frac{1}{\pi}\, \left| \Im \zeta(x) \right|, \quad p_+(t)\leq x \leq q(t),
\end{equation}
where $\zeta = \zeta(x)$ is a non-real solution of the equation
\eqref{RSequation3}.
\end{theorem}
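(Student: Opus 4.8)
The plan is to run the Deift--Zhou steepest descent method on the $3 \times 3$ RH problem \eqref{Yjump}--\eqref{Yedge} for $Y = Y_n$, treating Case~1 ($t < t^*$) and Case~2 ($t > t^*$) simultaneously with the notation $p_- < p_+ < q$; the borderline time $t = t^*$ is recovered afterwards from continuity of $\rho(\cdot\,;t)$ in $t$. Write $Y \mapsto X \mapsto U \mapsto S \mapsto R$ for the chain of transformations. The first one, $Y \mapsto X$, exploits the explicit structure of the jump in \eqref{Yjump}: since $w_1, w_2$ in \eqref{weights2} are built from $I_\alpha(2n\sqrt{ax}/t)$ and $I_{\alpha+1}(2n\sqrt{ax}/t)$, the connection formulas and asymptotics for the modified Bessel functions $I_\alpha, I_{\alpha+1}$ and their companions $K_\alpha, K_{\alpha+1}$ let one factor the off-diagonal entries and split the jump, at the price of creating a new jump on the negative real axis. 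After $Y \mapsto X$ the contour is $(-\infty,0] \cup [0,\infty)$ with triangular jump matrices whose nonzero entries are entire functions of controlled exponential growth and decay; this is where the particular nature of the weights enters.

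The second transformation $X \mapsto U$ is built on the three-sheeted Riemann surface of \eqref{RSequation3}, whose branch points $\{0,\infty,p,q\}$ are supplied by Proposition~\ref{prop:RiemannSurface}. Guided by the explicit recurrence and differential relations for the multiple orthogonal polynomials found by Coussement and Van Assche \cite{CV1,CV2}, one takes $\lambda_1,\lambda_2,\lambda_3$ to be the three branches of the solution $\zeta$ of the cubic \eqref{RSequation4}, defines $g$-functions $g_j$ with $g_j' = \lambda_j$ normalized so that $g_1(z) = \log z + \mathcal{O}(1/z)$ while $g_2, g_3$ stay bounded at infinity — matching \eqref{Yasym} — and conjugates $X$ by the diagonal matrix built from $e^{\pm n g_j}$ and the power factors in \eqref{Yasym}. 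Using the jump relations of the $\lambda_j$ across the real cuts of the surface, one obtains $U$ with $U(z) = I + \mathcal{O}(1/z)$ at infinity, a unimodular off-diagonal (oscillatory) jump on the band $[p_+, q]$, and jumps exponentially close to the identity, away from the branch points, on the complementary real intervals; an alternative derivation via an equilibrium problem and its $g$-functions is also available. Opening lenses around $[p_+, q]$ then produces $S$, whose jump is uniformly $I + \mathcal{O}(e^{-cn})$ except on $[p_+, q]$ and on small circles around the branch points $p_-, p_+, 0, q$.

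One then constructs the parametrices. The outer parametrix $N$ solves the model RH problem carrying only the jumps on the real line that come from the Riemann surface of \eqref{RSequation3}; it is assembled from meromorphic functions on that surface together with a Szeg\H{o}-type scalar function handling the jump on the negative axis. Since that cut is \emph{unbounded}, $N$ must be designed so as to remain bounded as $z \to \infty$ along the negative axis, which is one of the genuinely new technical points. The second new point is the local parametrix at the hard edge $0$: it has to be built from Bessel functions of orders $\alpha$ and $\alpha+1$, match $N$ on a small circle to relative order $\mathcal{O}(1/n)$, and reproduce all three regimes of the behaviour $h(z)$ in \eqref{Yedge}, including the critical case $t = t^*$ where $p$ collides with $0$. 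At the soft edges $q$ and $p_+$ (and at $p_-$ in Case~2, and at $p$ when $p > 0$) one uses the classical Airy parametrix. Gluing $N$ to the local parametrices and comparing on the small circles shows that the error $R$ solves a small-norm RH problem with jump $I + \mathcal{O}(1/n)$, so that $R = I + \mathcal{O}(1/n)$ uniformly.

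Finally, reversing the chain $R \mapsto S \mapsto U \mapsto X \mapsto Y$ and substituting into \eqref{kernel}, one finds that for $x = y$ in the open interval $(p_+(t), q(t))$ the leading contribution comes entirely from the outer parametrix; computing the relevant $(2,1)$ and $(3,1)$ entries weighted by $w_1, w_2$ — the $3 \times 3$ analogue of the Christoffel--Darboux evaluation for orthogonal polynomials — gives, for the branch $\lambda_j$ that is non-real on $(p_+, q)$,
\[
\frac1n K_n(x,x;t) = \frac{1}{2\pi i}\bigl(\lambda_{j,-}(x) - \lambda_{j,+}(x)\bigr) + \mathcal{O}\!\left(\frac1n\right) = \frac1\pi\,\bigl|\Im\zeta(x)\bigr| + \mathcal{O}\!\left(\frac1n\right),
\]
the last equality holding because on $[p_+, q]$ two of the three branches of $\zeta$ are complex conjugate and furnish the boundary values $\lambda_{j,\pm}$. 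For $x \in [0,\infty)$ outside $[p_+(t), q(t)]$ all three branches are real there, the $g$-function inequalities used to open the lenses are strict, and the same unravelling gives $\tfrac1n K_n(x,x;t) \to 0$ (exponentially fast), so the support of $\rho$ is exactly $[p_+(t), q(t)]$, with a square-root vanishing at $q$ (and at $p_+$ in Case~1) from the Airy parametrix and the appropriate hard-edge behaviour at $p_+ = 0$ in Case~2. The main obstacles, as flagged, are the outer parametrix in the presence of the unbounded cut along the negative axis and, above all, the hard-edge parametrix at $0$; the remaining steps follow the standard DKMVZ scheme \cite{DKMVZ1,DKMVZ2}.
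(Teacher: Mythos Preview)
Your proposal is correct and follows essentially the same route as the paper: the chain $Y \to X \to U \to T \to R$ (you call the penultimate step $S$), with the Bessel-based first transformation, $\lambda$-functions from the Riemann surface \eqref{RSequation3}, lens opening on $\Delta_1$, outer parametrix $N_\alpha$, Airy parametrices at the soft edges, a Bessel parametrix at $0$, and the final kernel computation via \eqref{kernel} yielding $\tfrac{1}{\pi}|\Im\zeta_{1,+}(x)|$. A few small slips to clean up: you list local parametrices at ``$p_-, p_+, 0, q$'' but $0$ is always one of $p_\pm$ (there are three finite branch points, not four); your $\lambda_j$ are the paper's $\zeta_j$ while the paper's $\lambda_j$ are their antiderivatives; and the outer parametrix $N_\alpha$ is not bounded at infinity along the negative axis but rather has the prescribed growth \eqref{eq:Nasymptotics} matching that of $T$.
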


From Theorem \ref{theo:domain} it follows that as $n \to \infty$, the
non-intersecting squared Bessel processes fill out a simply connected
region in the $tx$-plane given by
\[ 0 < t < 1, \qquad p_+(t) < x < q(t). \]
This region can be seen in Figure \ref{fig:SqBessel-50paths}.
\begin{figure}[htb]
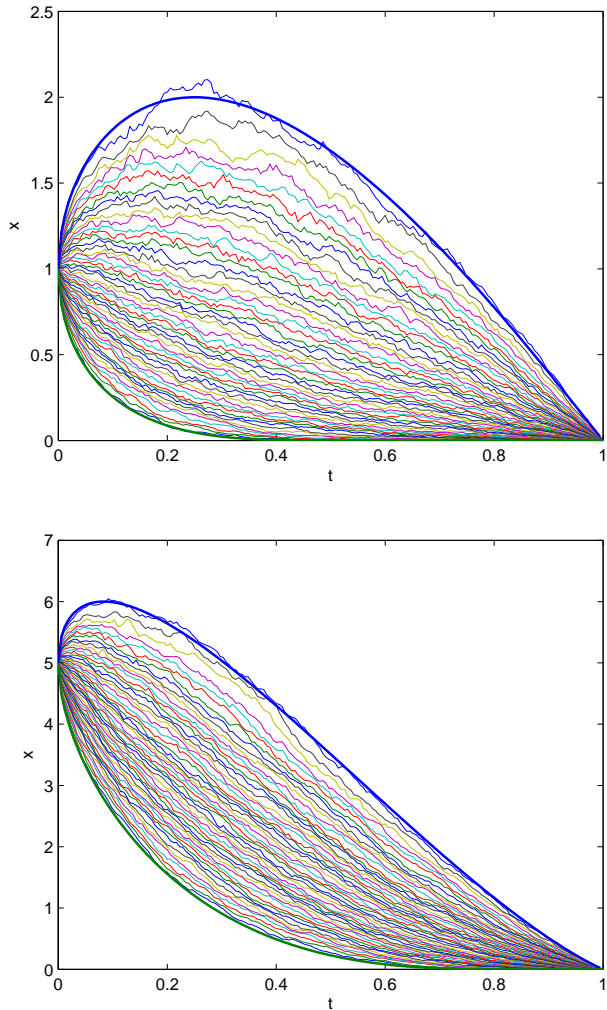

\centering \begin{overpic}[scale=0.63]%
{SqBessel-50paths1}%
\end{overpic}\\
\begin{overpic}[scale=0.63]%
{SqBessel-50paths5}
\end{overpic}
\caption{Numerical simulation of $50$ rescaled non-intersecting BESQ$^2$ with $a=1$ (top) and $a=5$ (bottom).
Bold line is the boundary of the domain described in Theorem \ref{theo:domain}.}
\label{fig:SqBessel-50paths}
\end{figure}

From the definition of $p_+(t)$ and $q(t)$ as branch points of
the Riemann surface for \eqref{RSequation4} it may be shown that
$x = p_+(t)$, $x=q(t)$ are solutions of the algebraic equation
\begin{equation} \label{boundaryequation}
    4ax^3 + x^2(t^2 - 20a t(1-t) - 8a^2(1-t)^2) - 4x(1-t)(t-a(1-t))^3 = 0.
    \end{equation}
The locus of this algebraic curve in $0 < t < 1$, $x > 0$
gives us the boundary curve. Observe that it depends only on $a$, and is independent
from the parameter $\alpha $.

There are some peculiar features of the boundary curve, which may be checked
by direct calculation that we leave to the reader.

\begin{corollary} \label{cor:boundaryCurve}
For every $a> 0$ we have the following.
\begin{enumerate}
\item[\rm (a)] The lower boundary curve $x=p_+(t)$ is positive for $t < t^* = a/(a+1)$
and it is zero for $t \geq t^*$. At $t=t^*$ it has continuous first and second
order derivatives.
\item[\rm (b)] The upper boundary curve $x = q(t)$ has a slope
\[ q'(1) = -4 \]
at $t=1$ which is independent of the value of $a$.
\item[\rm (c)] The upper boundary curve $x=q(t)$ is concave  if $a \leq 1$.
It is not concave on the full interval $[0,1]$ if $a > 1$.
\item[\rm (d)]
The maximum of the upper boundary curve $x=q(t)$ is $a+1$.
\end{enumerate}
\end{corollary}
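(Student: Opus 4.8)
Throughout write $F(x,t)$ for the left-hand side of \eqref{boundaryequation} and use the factorization $F(x,t)=x\bigl(4ax^{2}+c_{1}(t)x+c_{0}(t)\bigr)$ with $c_{1}(t)=t^{2}-20at(1-t)-8a^{2}(1-t)^{2}$ and $c_{0}(t)=-4(1-t)(t-a(1-t))^{3}$, so that $p_{\pm}(t)$ and $q(t)$ are the roots of the quadratic $4ax^{2}+c_{1}x+c_{0}$. The one algebraic simplification used repeatedly is $t-a(1-t)=(a+1)(t-t^{*})$, whence $c_{0}(t)=-4(a+1)^{3}(1-t)(t-t^{*})^{3}$, $k=(a+1)(1-t)(t-t^{*})$; moreover $c_{1}(0)=-8a^{2}<0$, $c_{1}(t^{*})=-27a^{2}/(a+1)^{2}<0$, and $F(x,0)=4ax(x-a)^{2}$. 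For part (a) the dichotomy ($p_{+}(t)=p(t)>0$ for $t<t^{*}$, $p_{+}(t)=0$ for $t\ge t^{*}$) is just Proposition \ref{prop:RiemannSurface}. For the $C^{2}$ statement at $t^{*}$ I would take $p_{+}$ to be the root of $4ax^{2}+c_{1}x+c_{0}$ tending to $0$ as $t\to t^{*-}$, i.e. $p_{+}=\tfrac1{8a}\bigl(-c_{1}-\sqrt{c_{1}^{2}-16ac_{0}}\bigr)$; near $t^{*}$ the radicand equals $c_{1}(t^{*})^{2}+\mathcal O(t-t^{*})>0$, so $p_{+}$ is real-analytic up to $t^{*}$ from the left. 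Differentiating $4ap_{+}^{2}+c_{1}p_{+}+c_{0}=0$ once and twice, and using that $c_{0}$ has a zero of order exactly $3$ at $t^{*}$ (so $c_{0}(t^{*})=c_{0}'(t^{*})=c_{0}''(t^{*})=0$) together with $c_{1}(t^{*})\ne0$, forces $p_{+}'(t^{*-})=p_{+}''(t^{*-})=0$, which matches the identically-zero branch on $[t^{*},1]$.

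For part (b), and for the second assertion of (c), I would set $u=1-t$ and expand $F(x,t)/x=4ax^{2}+c_{1}(1-u)x+c_{0}(1-u)$ as a power series in $(x,u)$ around $(0,0)$: its linear part is $x-4u$, so the $q$-branch (the positive root vanishing as $t\to1$) admits a convergent expansion $q(t)=4(1-t)+e(1-t)^{2}+\cdots$. Matching the coefficient of $u^{2}$ gives $e=4(a-1)$, hence $q(1)=0$, $q'(1)=-4$ independently of $a$, and $q''(1)=8(a-1)$. Therefore, for $a>1$ the curve $x=q(t)$ has $q''(1)>0$, so it is strictly convex near $t=1$ and cannot be concave on all of $[0,1]$.

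For part (d) I would avoid resultants and instead use the rational parametrization \eqref{RSequation3}, $z=\dfrac{1-k\zeta}{\zeta(1-m\zeta)^{2}}$ with $m=t(1-t)$. A branch point is a zero of $\partial z/\partial\zeta$, which reduces to $2km\zeta^{2}-3m\zeta+1=0$; from that relation $1-k\zeta=\dfrac{1-m\zeta}{2m\zeta}$, hence $z=\dfrac{1}{2m\zeta^{2}(1-m\zeta)}$ at any branch point. Since $\partial z/\partial\zeta=0$ along $t\mapsto q(t)$, we have $q'(t)=(\partial z/\partial t)\big|_{\zeta}$; setting this to zero and using the previous identity gives $\zeta=m_{t}/(mk_{t})$, and substituting back into the branch-point equation leaves the single condition $mk_{t}^{2}-3mm_{t}k_{t}+2km_{t}^{2}=0$. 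With the explicit $m$ and $k$ inserted (and the convenient identity $k_{t}-m_{t}=2a(1-t)$) this is a quartic in $t$ with a triple zero at $t=1$, hence a unique root $t_{0}=\dfrac1{2(a+1)}$ in $(0,1)$. There $k_{t}=2a$, $m_{t}=a/(a+1)$, and $q(t_{0})=\dfrac{t\,k_{t}^{3}}{4a\,m_{t}^{2}}\Big|_{t_{0}}=a+1$. Since $q(0)=a$, $q(1)=0$, $q$ is continuous and positive on $(0,1)$, and (by the above) it has at most one interior critical point, that point must be the global maximum; this proves (d), and incidentally confirms the single-hump shape of the upper boundary visible in Figure \ref{fig:SqBessel-50paths}.

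The genuinely delicate point is the first assertion of (c): that $x=q(t)$ is concave on the \emph{whole} interval $[0,1]$ when $0<a\le1$. The plan is to differentiate $F(q(t),t)=0$ twice, express $q''$ as a rational function of $t$ and $q(t)$, reduce it modulo $F(q,t)=0$, and show it is $\le0$ on $(0,1)$ for $a\le1$. The ends and the maximum are under control --- near $t=0$ one has $q(t)=a+\sqrt{8at}+\cdots$, so $q''\to-\infty$; at $t=1$ we already know $q''(1)=8(a-1)\le0$; and at $t_{0}$, being a maximum, $q''(t_{0})\le0$ --- so the real work is to exclude an interior sign change of $q''$. I expect this to come down to a positivity estimate for a fixed polynomial in $t$ carrying the parameter $a\in(0,1]$, perhaps handled most cleanly after eliminating $q$ via the curve equation or by re-using the $\zeta$-parametrization above; this sign analysis is the step I expect to be the main obstacle.
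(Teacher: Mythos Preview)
The paper itself does not give a proof of this corollary: it merely states (after Proposition~\ref{prop:RiemannSurface} in Section~\ref{section4}) that ``the rest of the assertions of Corollary~\ref{cor:boundaryCurve} is a consequence of straightforward although tedious computations based on equation~\eqref{boundaryequation}.'' So there is no detailed argument in the paper to compare against, and your write-up is in fact considerably more explicit than anything the authors provide.

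Your treatment of (a), (b), the second assertion of (c), and (d) is correct. The verification of $c_{1}(t^{*})=-27a^{2}/(a+1)^{2}\ne0$ together with the third-order vanishing of $c_{0}$ at $t^{*}$ does give $p_{+}'(t^{*-})=p_{+}''(t^{*-})=0$ as you claim. The expansion near $t=1$ yielding $q'(1)=-4$ and $q''(1)=8(a-1)$ is right, and immediately gives the non-concavity for $a>1$. For (d) you take a genuinely different route from what the paper suggests: instead of working only with the discriminant equation~\eqref{boundaryequation}, you exploit the rational parametrization~\eqref{RSequation3} to locate the unique interior critical point $t_{0}=1/(2(a+1))$ and evaluate $q(t_{0})=a+1$ via the clean identity $q=\dfrac{t\,k_{t}^{3}}{4a\,m_{t}^{2}}$ (using $k_{t}-m_{t}=2a(1-t)$). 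This is an elegant shortcut; the paper presumably intends a brute-force analysis of the quadratic in $x$ coming from~\eqref{boundaryequation}, which is messier.

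The one genuine gap is exactly where you flag it: the global concavity of $q$ on $[0,1]$ for $0<a\le1$. You have the endpoints and the maximum under control, and your plan (differentiate $F(q(t),t)=0$ twice, reduce $q''$ modulo the curve equation, and check a sign) is the natural one, but as written the sign analysis is not carried out. Since the paper offers no argument either, you are not missing anything the paper supplies; nevertheless, if you want a complete proof of (c) you still owe this computation. One workable way to finish is to stay in the $\zeta$-parametrization you used for (d): along the $q$-branch you already have $q=\dfrac{1}{2m\zeta^{2}(1-m\zeta)}$ with $\zeta$ determined by $2km\zeta^{2}-3m\zeta+1=0$, and differentiating once more in $t$ (using $\partial z/\partial\zeta=0$) expresses $q''$ as a rational function of $t$ alone after eliminating $\zeta$. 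This keeps the algebra one variable at a time and is likely the cleanest route to the required sign.
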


By continuity the results of Theorem \ref{theo:domain} and Corollary \ref{cor:boundaryCurve}
continue to hold for $a=0$, which is the case of non-intersecting squared Bessel bridges
\cite{KIK}.

\begin{remark}
 The numerical experiments leading to Figure \ref{fig:SqBessel-50paths} have been carried out
 exploiting the connection of the non-intersecting squared Bessel paths with the matrix-valued
 Laguerre process, as described in \cite{KT0, KO}. Indeed, let $\alpha \in \N \cup \{ 0\}$ and
 $b_{jk}, \widetilde b_{jk}$, $1\leq j\leq n+\alpha $, $1\leq k\leq n$, be independent one-dimensional
 standard Brownian motions.  Consider the $(n+\alpha ) \times n$   matrix-valued process
 $M(t)=(m_{jk})$ with entries $m_{jk}(t) = b_{jk}(t) + i \, \widetilde b_{jk}(t)$ and
 define the $n\times n$ symmetric positive definite matrix-valued process, called the Laguerre process, by
$$
\Xi(t) = M(t)^* M(t)\,, \qquad t\in [0, +\infty)\,,
$$
where $M(t)^*$ denotes the conjugate transpose of $M(t)$. Then the process of eigenvalues of
$\Xi(t)$ and the noncolliding $n$-particle system of BESQ$^d$, with $d=2(\alpha +1)$, are equivalent in distribution.
\end{remark}

Finally, in the non-critical case $t \neq t^*$ we find the usual scaling limits from random matrix theory,
namely the sine, Airy, and Bessel kernels.

\begin{theorem} \label{theo:local1}
Let $t \neq t^*$. Then for $x^* \in (p_+(t),q(t))$, we have
\[ \lim_{n \to \infty} \frac{1}{n \rho(x^*)} K_n\left(x^* + \frac{x}{n \rho(x^*)}, x^* + \frac{y}{n \rho(x^*)}\right)
    = \frac{\sin \pi (x-y)}{\pi (x-y)} \]
uniformly for $x$ and $y$ in compact subsets of $\mathbb R$.
\end{theorem}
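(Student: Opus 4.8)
The plan is to follow the Deift--Zhou steepest descent scheme for the $3\times 3$ RH problem \eqref{Yjump}, \eqref{Yasym}, \eqref{Yedge}, precisely as outlined in the introduction, and then extract the sine kernel at a bulk point $x^*$ in the standard way. First I would carry out the preliminary transformation $Y \mapsto \widetilde Y$ that uses the explicit modified-Bessel structure of $w_1, w_2$ (the functions $I_\alpha$, $I_{\alpha+1}$); as noted before Section \ref{section3}, this produces a jump on the negative real axis. Next comes the key transformation $\widetilde Y \mapsto U$ built from functions on the three-sheeted Riemann surface of \eqref{RSequation3}, using the branch-point structure of Proposition \ref{prop:RiemannSurface}; the associated $\lambda$-functions (or equivalently the $g$-functions of the appendix) must be chosen so that, after the transformation, the jump matrices on the support $[p_+(t), q(t)]$ have purely oscillatory diagonal blocks governed by $e^{\pm 2\pi i n \int \rho}$, while on the complementary contours the off-diagonal entries decay exponentially in $n$. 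This is where the density formula \eqref{density}, $\rho = \tfrac1\pi|\Im\zeta|$, enters: the local eigenvalue spacing at $x^*$ is $1/(n\rho(x^*))$, which is exactly the scaling in the statement.

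Then I would open lenses around the cut $[p_+(t),q(t)]$ to turn the oscillatory jump into constant jumps plus exponentially small errors, obtaining $U \mapsto S$, and construct the global parametrix $N$ solving the model RH problem with constant jumps on $[p_-(t),q(t)]$ (including the unbounded cut along $\R_-$ created by the first transformation) — this is the delicate construction of Section \ref{section6}. At the soft edges $p_+(t)$ and $q(t)$ one builds Airy parametrices in the usual $2\times 2$ fashion embedded in a $3\times 3$ frame; at the hard edge $0$ (when $t > t^*$, so $p_-(t) = p(t) < 0 < q(t)$ and $0$ is interior to the cut, versus $t < t^*$ where $0 < p_+(t)$ and the origin is outside the support so no Bessel parametrix is needed there) one uses Bessel-type parametrices — but since $x^* \in (p_+(t),q(t))$ is a bulk point, none of these endpoint parametrices affects the final local analysis; only the global parametrix does. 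Matching the local and global parametrices on small circles gives jumps $I + \mathcal O(n^{-1})$, so the final transformation $S \mapsto R$ with $R = I + \mathcal O(1/n)$ uniformly closes the analysis.

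Finally, unravelling $Y = (\text{explicit factors})\, R\, N\, (\text{lens and } \lambda\text{-factors})$ near $x = x^* + u/(n\rho(x^*))$, the contributions of the $\lambda$-functions produce, via Taylor expansion $\lambda_{j,\pm}(x^* + u/(n\rho(x^*))) = \lambda_{j,\pm}(x^*) + \tfrac{u}{n\rho(x^*)}\lambda_{j,\pm}'(x^*) + \mathcal O(n^{-2})$ together with $\lambda_{1,+}'(x^*) - \lambda_{1,-}'(x^*) = \mp 2\pi i \rho(x^*)$ (the defining property of $\rho$), the exponentials $e^{\pm \pi i (x - y)}$ that assemble into $\sin\pi(x-y)/(\pi(x-y))$; the global parametrix values $N(x^*)$ and $N^{-1}(x^*)$ cancel in the quadratic form defining $K_n$ in \eqref{kernel}, and $R(x^*) = I + \mathcal O(1/n)$ contributes only lower-order terms. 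Uniformity for $u, v$ in compact subsets of $\R$ follows because all the above expansions are uniform there. The main obstacle I anticipate is not the bulk extraction itself, which is routine once the scheme is in place, but rather verifying that the Riemann-surface transformation genuinely yields the correct sign structure and exponential decay on all auxiliary contours simultaneously in both Case 1 and Case 2 — in particular controlling the unbounded cut on $\R_-$ and checking that $\Re \lambda$-inequalities hold globally; this is the substantive input, and it is precisely the content of Sections \ref{section4}--\ref{section6} that Theorem \ref{theo:local1} is allowed to invoke.
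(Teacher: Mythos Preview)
Your proposal is correct and follows essentially the same route as the paper: the chain of transformations $Y\to X\to U\to T\to R$ (your $\widetilde Y, S$ are the paper's $X, T$), the lens opening on $\Delta_1$, the global parametrix $N_\alpha$, and then the unravelling \eqref{KernelAnalysis} combined with the Taylor expansion of $\Im\lambda_{1,+}$ using $\Im\zeta_{1,+}(x^*)=-\pi\rho(x^*)$, exactly as in the paper's proof in Section~\ref{section10}. One small inaccuracy that does not affect the bulk argument: in both Case~1 and Case~2 the origin is a branch point requiring a local (Bessel-type) parametrix---for $t<t^*$ it is the right endpoint of $\Delta_2$, for $t>t^*$ the left endpoint of $\Delta_1$---but as you correctly note, none of these local parametrices enters the analysis at a bulk point $x^*$.
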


\begin{theorem} \label{theo:local2}
Let $t \neq t^*$. Then for some constant $c >0$,
\[ \lim_{n \to \infty} \frac{1}{cn^{2/3}} K_n\left(q(t) + \frac{x}{cn^{2/3}}, q(t) + \frac{y}{cn^{2/3}} \right)
    = \frac{\Ai(x) \Ai'(y) - \Ai'(x) \Ai(y)}{x-y}. \]

If $t < t^*$, then for some constant $c > 0$,
\[ \lim_{n \to \infty} \frac{1}{cn^{2/3}} K_n\left(p_+(t) - \frac{x}{cn^{2/3}}, p_+(t) - \frac{y}{cn^{2/3}} \right)
    = \frac{\Ai(x) \Ai'(y) - \Ai'(x) \Ai(y)}{x-y}. \]
\end{theorem}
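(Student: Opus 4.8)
\emph{Proof plan.} The plan is to obtain Theorem \ref{theo:local2} from the Deift--Zhou steepest descent analysis of the $3\times 3$ RH problem for $Y$, in exactly the same way that the sine kernel in Theorem \ref{theo:local1} is obtained in the bulk, but now localizing at the soft edge $x=q(t)$ (and at $x=p_+(t)$ when $t<t^*$). By the point at which this statement is proved we already have at our disposal the full chain of transformations $Y\mapsto\cdots\mapsto R$, the global parametrix $N$ constructed from the three-sheeted Riemann surface of \eqref{RSequation3}, and local parametrices in small fixed disks around each branch point $p_-$, $p_+$, $q$ and around the hard edge $0$, with the conclusion $R=I+\mathcal O(1/n)$ uniformly on the relevant contours. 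What remains is to write down the local parametrix at $q$ (and at $p_+$) explicitly in terms of Airy functions and then unravel the transformations to read off the limiting kernel.

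First I would recall that at the soft edge $z=q(t)$ only two of the three sheets of the Riemann surface come together, say those carrying the $\zeta$-branches $\zeta_1$ and $\zeta_2$, and that the corresponding $\phi$-function (the antiderivative of $\zeta_1-\zeta_2$, equivalently the difference of $g$-functions used in the opening of the lenses) vanishes like $(z-q)^{3/2}$ there. Hence $f(z):=\left(\tfrac34\,\phi(z)\right)^{2/3}$ extends to a conformal map of a neighbourhood of $q$ onto a neighbourhood of $0$ with $f(q)=0$ and $f'(q)=:c=c(t)>0$ an explicit constant governed by the square-root vanishing of $\rho$ at $q$. In the disk $D_\delta(q)$ the local parametrix $P$ is then built, as in \cite{DKMVZ1, DKMVZ2}, by placing the standard $2\times 2$ Airy parametrix (assembled from $\Ai$, $\Ai'$ and their rotations, with argument $n^{2/3}f(z)$) in the $2\times 2$ sub-block of the two coalescing sheets, putting $1$ in the remaining diagonal entry, and conjugating by $N$; the matching $P(z)=N(z)\bigl(I+\mathcal O(1/n)\bigr)$ on $\partial D_\delta(q)$ holds because of the $n^{2/3}$-rescaling hidden in that Airy argument.

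Next I would trace $Y\mapsto\cdots\mapsto R$ backwards to express $K_n(x,y)$, for $x,y\in D_\delta(q)$, in terms of $R$, $N$, the Airy parametrix and the scalar prefactors $w_1,w_2,\widehat w_0$ and the $g$/$\phi$-functions, and then substitute $x=q(t)+x/(cn^{2/3})$, $y=q(t)+y/(cn^{2/3})$. Using $R=I+\mathcal O(1/n)$, the smoothness and invertibility of $N$ near $q$, and $n^{2/3}f\bigl(q+\xi/(cn^{2/3})\bigr)=\xi+\mathcal O(n^{-2/3})$, the $N$- and prefactor-contributions cancel in the combination defining $K_n$ and one is left with $\dfrac{\Ai(x)\Ai'(y)-\Ai'(x)\Ai(y)}{x-y}$, uniformly for $x,y$ in compacts; this is precisely the computation carried out in detail for the one-matrix model in \cite{DKMVZ1} and it carries over here. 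For the edge $p_+(t)$ with $t<t^*$ the branch point $p=p_+>0$ is again a genuine soft edge on $(0,\infty)$ where the density vanishes like a square root, but now from the right, so the same construction applies with the orientation reversed; this is the source of the reflected scaling $p_+(t)-x/(cn^{2/3})$ and of the (in general different) value of $c$.

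The main obstacle, I expect, is not the Airy computation itself but verifying that the hypotheses legitimizing the standard local analysis actually hold in this $3\times 3$ situation: that near $q$ (resp.\ $p_+$) exactly two sheets coalesce with the correct $(z-q)^{3/2}$ behaviour, that the lens contours can be deformed so that the relevant $\Re\phi$ has the right sign on them, and---crucially for a $3\times 3$ problem---that the contribution of the remaining ``spectator'' third sheet is exponentially small (its $\phi$-difference has strictly positive real part throughout the disk), so that the identity block is justified. These facts follow from the structure of the Riemann surface in Proposition \ref{prop:RiemannSurface} together with the sign analysis already performed for Theorems \ref{theo:domain} and \ref{theo:local1}, but they must be checked separately at $q$ and at $p_+$, with care that $p_+(t)$ collides with the hard edge $0$ as $t\uparrow t^*$---which is exactly why the second statement is restricted to $t<t^*$ and why uniformity is only claimed at a fixed $t$.
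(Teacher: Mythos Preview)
Your proposal is correct and follows essentially the same approach as the paper's proof. The paper likewise identifies $c=f'(q)$ with $f(z)=\bigl(\tfrac34(\lambda_2-\lambda_1)(z)\bigr)^{2/3}$, uses $T=RP$ inside the disk around $q$ with the $3\times3$ Airy parametrix $P=E\,\Psi(n^{2/3}f)\cdot\mathrm{diag}(\ldots)$ of Section~\ref{section7} (your ``$2\times2$ Airy block with a spectator $1$''), substitutes $x_n=q+x/(cn^{2/3})$, invokes $E^{-1}(y_n)R^{-1}(y_n)R(x_n)E(x_n)\to I$, and reads off the Airy kernel from the explicit $\Psi$ entries; the edge at $p_+(t)$ for $t<t^*$ is dispatched in one sentence as analogous. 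One minor point: the prefactor in the local parametrix is not literally conjugation by $N_\alpha$ but the analytic matrix $E$ of \eqref{eq:defE}, which absorbs $N_\alpha$ together with the $z^{\pm\alpha/2}$ and $f^{\pm1/4}$ factors---this is what makes the cancellation you describe go through cleanly.
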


\begin{theorem} \label{theo:local3}
Let $t > t^*$. Then for some constant $c > 0$, and $x, y >0$,
\[ \lim_{n \to \infty} \frac{1}{cn^2} K_n\left(\frac{x}{cn^2}, \frac{y}{cn^2} \right)
    = \left(\frac{y}{x}\right)^{\alpha/2} \frac{J_{\alpha}(\sqrt{x}) \sqrt{y} J_{\alpha}'(\sqrt{y})
        - \sqrt{x} J_{\alpha}'(\sqrt{x}) J_{\alpha}(\sqrt{y})}{2(x-y)}. \]
\end{theorem}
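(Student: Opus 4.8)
**Proof proposal for Theorem \ref{theo:local3} (hard-edge Bessel kernel at $t>t^*$).**

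The plan is to follow the Deift--Zhou steepest descent machinery for the $3\times 3$ RH problem for $Y$, specialized to the regime $t>t^*$, and extract the local behaviour of $K_n$ near the origin. By Proposition \ref{prop:RiemannSurface}, when $t>t^*$ we are in Case 2, so $p_-=p<0<q$, the endpoint $0$ lies in the \emph{interior} of the support-related structure rather than being a branch point, and the limiting density $\rho(x)$ is positive in a neighbourhood of $x=0$ (it vanishes only at $q$). Hence near $0$ the process feels the hard wall $x=0$ but not a soft edge; this is precisely the configuration that produces the Bessel kernel. After the sequence of transformations $Y \mapsto X \mapsto T \mapsto S \mapsto R$ described in Sections \ref{section3}--\ref{section8} — the preliminary Bessel-function transformation creating the cut on $\R_-$, the $\lambda$- (or $g$-)function transformation based on the Riemann surface of \eqref{RSequation3}, the opening of lenses, and the global parametrix of Section \ref{section6} — one is left with the problem of building a local parametrix $P$ in a fixed disk $D(0,r)$ around the origin, matching the global parametrix $N$ on $\partial D(0,r)$ up to $1+\O(1/n)$ (or $1+\O(n^{-1/3})$, whichever the matching in Section \ref{section8} yields).

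The key step is the construction of that local parametrix at $0$ in terms of modified Bessel functions. Because two of the weights $w_1,w_2$ carry the factors $x^{\alpha/2}I_\alpha(2n\sqrt{ax}/t)$ and $x^{(\alpha+1)/2}I_{\alpha+1}(2n\sqrt{ax}/t)$, the correct model problem is a $3\times3$ RH problem solvable with Bessel functions $I_\alpha, K_\alpha$ of argument $\sim n^2 x$ — this is the ``new technical issue'' the authors flag in Section \ref{section8}. I would take the building block to be the classical $2\times 2$ Bessel parametrix of Kuijlaars--McLaughlin--Van Assche--Vanlessen, embedded into a $3\times 3$ block structure, with the conformal map $\zeta \mapsto c\, n^2 \zeta$ fixing the scale $n^2$ and the constant $c>0$ appearing in the statement. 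One checks: (i) $P$ has the same jumps as $S$ inside $D(0,r)$; (ii) $P$ has the prescribed $h(z)$ endpoint behaviour \eqref{Yedge}; (iii) $P N^{-1} = I + \O(1/n)$ on $\partial D(0,r)$. Then $R = S\,P^{-1}$ (inside the disk) / $S\,N^{-1}$ (outside) solves a small-norm RH problem, so $R = I+\O(1/n)$ uniformly.

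Finally I would unravel the transformations in the scaling region $x = x/(cn^2)$, $y=y/(cn^2)$. Substituting the chain $Y_+ = \cdots = R \cdot P \cdot (\text{explicit conjugations})$ back into the formula \eqref{kernel}, the contributions from $R$ (being $I+\O(1/n)$) and from the various diagonal/exponential conjugating factors cancel in the combination $\tfrac{w_1(y)}{\,}\cdots Y_+^{-1}(y)Y_+(x)$, and what survives is exactly the $(1,2)$- and $(1,3)$-entries of the Bessel model parametrix evaluated at $cn^2 x$ and $cn^2 y$. These assemble, via the Wronskian-type identity for Bessel functions ($J_\alpha(u)J_\alpha'(v)-J_\alpha'(u)J_\alpha(v)$ type combination, together with the $(y/x)^{\alpha/2}$ prefactor coming from the $x^{\alpha/2}$ factors in $w_1$ vs.\ the row structure), into
\[
\left(\frac{y}{x}\right)^{\alpha/2}\frac{J_\alpha(\sqrt x)\sqrt y\,J_\alpha'(\sqrt y)-\sqrt x\,J_\alpha'(\sqrt x)\,J_\alpha(\sqrt y)}{2(x-y)},
\]
which is the claimed limit; the convergence is uniform on compact subsets of $(0,\infty)^2$ because all error estimates above are uniform there. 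The main obstacle is step (ii)--(iii): constructing the $3\times 3$ Bessel-type local parametrix so that it simultaneously reproduces the two distinct weight singularities $x^{\alpha/2}I_\alpha$ and $x^{(\alpha+1)/2}I_{\alpha+1}$, matches the global parametrix $N$ (whose behaviour near $0$ must first be understood precisely, including the unbounded cut along $\R_-$ treated in Section \ref{section6}), and carries the right power $n^2$ and constant $c$; verifying the matching on $\partial D(0,r)$ is where the delicate asymptotics of modified Bessel functions of large argument enter.
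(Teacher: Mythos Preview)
Your overall strategy is correct and matches the paper's approach: steepest descent on the $3\times 3$ RH problem, a local Bessel parametrix at $0$ built from the $2\times 2$ KMVV model embedded in a $3\times 3$ block, and unraveling the transformations at scale $x/(cn^2)$. Two points need correction.

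First, your description of the Case~2 geometry is off. The point $0$ \emph{is} a branch point of the Riemann surface (Proposition~\ref{prop:RiemannSurface} lists the branch points as $0$, $p$, $q$, $\infty$), and since $p_+ = \max(0,p) = 0$ when $t>t^*$, the origin is the left endpoint of the support $[p_+,q]=[0,q]$, not an interior point. The density diverges like $x^{-1/2}$ there; that divergence is precisely what makes it a hard edge.

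Second, and more substantively, your description of what the local parametrix must accomplish is misleading. You say it must ``simultaneously reproduce the two distinct weight singularities $x^{\alpha/2}I_\alpha$ and $x^{(\alpha+1)/2}I_{\alpha+1}$''. But the whole point of the first transformation $Y\mapsto X$ (Section~\ref{section3}) is to \emph{remove} the modified Bessel functions from the jump matrices: after it the jump on $\R_+$ involves only $x^\alpha e^{-nx/(t(1-t))}$, see \eqref{Xjump1}. In Case~2 the local parametrix $Q$ near $0$ therefore has to match only the jumps of $T$ on $\Delta_1 \cup \Delta_1^\pm$ inside the disk (Proposition~\ref{prop:PhardCase2}), which carry $x^{\pm\alpha}$ but no $I_\alpha$, $I_{\alpha+1}$. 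The parametrix is exactly the standard KMVV Bessel model $\Psi$ of order $\alpha$, placed in the \emph{upper-left} $2\times 2$ block via $\widehat\Psi(\zeta)=\diag(\sigma_3\Psi(-\zeta)\sigma_3,\,1)$ and composed with the conformal map $f(z)=\bigl[\tfrac12\int_0^z(\zeta_2-\zeta_1)(s)\,ds\bigr]^2$; the constant in the theorem is (up to a factor $4$) $c=-f'(0)>0$. The Bessel functions appearing in $\Psi$ are the universal hard-edge functions associated with a weight $x^\alpha$, not the $I_\alpha$, $I_{\alpha+1}$ from the original weights \eqref{weights2}. So the construction is simpler than you fear, and the ``main obstacle'' you flag is largely dissolved by the first transformation rather than confronted at the parametrix stage.
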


In the bulk we find the sine kernel, at the soft edges we find the
Airy kernel, and at the hard edge $0$ we find the Bessel kernel
of order $\alpha$. Note that the factor $\left(y/x\right)^{\alpha/2}$
in the Bessel kernel is not important since it will not influence the determinantal
correlation functions. This observation also explains why totally symmetric results are obtained
if we reverse the process and study $n$ non-intersecting BESQ$^d$ paths starting at the origin and
ending at a positive value $a$. Indeed,  $\left(y/x\right)^{\alpha/2}$ is the only factor in the
transition probabilities \eqref{pt} that is not symmetric in its variables.

At $t = t^*$ there is a transition from the Airy
kernel to the Bessel kernel. This is when the non-intersecting squared Bessel paths
first hit the hard edge. The soft-to-hard edge transition is different from
previous ones considered in \cite{BF, CK2}.
We will treat this transition in a separate publication.

Observe also that neither the boundary of the domain filled by the scaled paths, nor the behavior
in the bulk or at the soft edge depends on the parameter $\alpha$ related to the dimension $d$ of
the BESQ$^d$. This dependency appears only in the interaction with the hard edge at $x=0$.
A possible interpretation may be that $\alpha$ is a measure for the interaction with the hard edge.
It does not influence the global behavior as $n \to \infty$, but only the local behavior near $0$.

\begin{remark} \label{rem:Besselpaths}
By taking square roots we can transplant Theorems \ref{theo:domain} and
\ref{theo:local1}--\ref{theo:local3} to the case of non-intersecting
Bessel paths. The correlation kernel for the positions of
non-intersecting Bessel paths, starting at $\sqrt{a}$ and ending
at $0$ is
\[  2 \sqrt{xy} K_n(x^2,y^2) \]
where $K_n$ is the kernel \eqref{kernel} as before. It is then easy to show
from Theorems \ref{theo:local1} and \ref{theo:local2} that the scaling limits
are again the sine kernel in the bulk and the Airy kernel at the soft edges. At the hard
edge however, Theorem \ref{theo:local3} gives the scaling limit
\[  \left(\frac{y}{x} \right)^{\alpha} \frac{\sqrt{xy}}{x+y}
            \frac{J_{\alpha}(x) y J_{\alpha}'(y)
        - x J_{\alpha}'(x) J_{\alpha}(y)}{x-y}. \]
\end{remark}

The proofs of Theorems \ref{theo:domain}, \ref{theo:local1}, \ref{theo:local2}, and \ref{theo:local3}
are given in Section \ref{section10}. They follow from the steepest descent analysis of
the RH problem for $Y$. The steepest descent analysis itself takes most of the paper,
see Sections \ref{section3}--\ref{section9}.

Since we will be dealing extensively with $3 \times 3$ matrices we find it useful
to use the notation $E_{ij}$ to denote
the $3 \times 3$ elementary matrix whose entries are all $0$, except for
the $(i, j)$-th entry, which is $1$. Thus
\begin{equation} \label{Eijdef}
    \left(E_{ij} \right)_{k,l} = \delta_{i,k} \delta_{j,l}
    \end{equation}
for $i,j,k,l \in \{ 1,2,3\}$.
The following properties can be easily checked and will be used without comment.
\begin{lemma} \label{lem:M}
\begin{enumerate}
\item[\rm (a)]
For $i, j, k , l \in \{1, 2, 3\}$,
$$
E_{ij} E_{kl}=\begin{cases}
E_{il}, & \text{if } j=k\,, \\
O, & \text{otherwise.}
\end{cases}
$$
\item[\rm (b)]
If $c\in \C$ and $i, j  \in \{1, 2, 3\}$, $i\neq j$, then $I + c E_{ij}$ is invertible, and
$$
\left(I + c E_{ij} \right)^{-1} = I - c E_{ij}\,.
$$
\end{enumerate}
\end{lemma}

\section{First transformation of the RH problem}\label{section3}

We apply the Deift-Zhou method of steepest descent to the RH problem  \eqref{Yjump}, \eqref{Yasym},
\eqref{Yedge} with weights $w_1$ and $w_2$ given by
\eqref{weights2} and with indices $n_1$ and $n_2$ as follows:
\begin{equation} \label{n1n2}
    n_1 = \begin{cases}
    n/2, & \text{if $n$ is even}, \\
    (n+1)/2, & \text{if $n$ is odd,}
    \end{cases} \qquad
    n_2 = \begin{cases}
    n/2, & \text{if $n$ is even}, \\
    (n-1)/2, & \text{if $n$ is odd.}
    \end{cases}
 \end{equation}
The steepest descent analysis has certain new features that have
not appeared in the literature before.

A possible approach was suggested by Van Assche et al.\ in \cite{VAGK},
since the system of weights \eqref{weights2} is a Nikishin system \cite{NS}. This means (in this case)
that
\begin{equation}\label{discreteCauchy}
    \frac{w_2(x)}{w_1(x)} = x\int_{-\infty}^{0} \frac{d \sigma_n(u)}{x-u}\,,
\end{equation}
where $\sigma_n$ is a discrete measure on the negative real line, see \cite[Theorem 1]{CV2},
with masses at the point
\begin{equation} \label{Besselzeros}
    - (t\, j_{\alpha,k}/(2n\sqrt{a}))^2, \qquad  k=1,2,\ldots,
\end{equation}
where $j_{\alpha,k}$, $k=1,2, \ldots$, are the positive zeros of the Bessel
function $J_{\alpha}$. The approach of \cite{VAGK} would involve a preliminary
transformation
\begin{equation} \label{Xdefold}
    X =
         Y \left( I - \frac{w_2}{w_1} E_{23} \right)
\end{equation}
which would result in a jump condition
\begin{equation} \label{Xjumpold}
    X_+ =
    X_- \left( I + w_1 E_{12} \right)
    \end{equation}
on $\mathbb R_+$. Since $\frac{w_2}{w_1}$ has  poles on the negative real line,
the third column of $X$ has poles on the negative real line, which could be
described by residue conditions as in \cite{BKMM}. We might then continue as in
\cite{BKMM} by turning the residue conditions into jump conditions. However we
will not follow this approach and we will not use the transformation \eqref{Xdefold}.

Instead, our first transformation is based on the special properties
of the modified Bessel functions. We introduce the two functions
\begin{equation} \label{y1y2}
y_{1} (z)=z^{(\alpha+1)/2}I_{\alpha+1} (2\sqrt{z}), \qquad
y_{2} (z)=z^{(\alpha+1)/2}K_{\alpha+1} (2\sqrt{z}),
\end{equation}
where $K_{\alpha+1}$ denotes the modified Bessel function of the second
kind, see \cite[Section 9.6]{AS} for its main
properties. The functions $y_{1}$ and $y_{2}$ are
defined and analytic in the complex plane with a branch cut along the negative real
axis.
The jumps on $\R_-$
can be computed from the formulas 9.6.30 and 9.6.31 of \cite{AS}. We have
\begin{equation}\label{jumpY}
\begin{aligned}
y_{1+} (x)&=e^{2i \alpha\pi}y_{1-} (x),& x<0,\\
y_{2+} (x)&=y_{2-} (x) -i\pi e^{i \alpha\pi}y_{1-} (x), & x<0.
\end{aligned}
\end{equation}

From the expressions for the derivatives of the modified Bessel
functions, see \cite[formulas 9.6.26]{AS}, we deduce that
\begin{equation} \label{y1y2prime}
y_{1}' (z)=z^{\alpha/2}I_{\alpha} (2\sqrt{z}),\qquad
y_{2}' (z)=-z^{\alpha/2}K_{\alpha} (2\sqrt{z}).
\end{equation}
The relations \eqref{y1y2} and \eqref{y1y2prime} imply
that the weights $w_{1}$ and $w_{2}$ defined by \eqref{weights2} can be expressed in
terms of the function $y_{1}$ and its derivative $y_{1}'$ as
\begin{equation}\label{wY}
\begin{aligned}
w_{1} (x) &= \tau^{-\alpha} \exp\left(-\frac{nx}{t (1-t)}\right) y_{1}'(\tau^2 x),\\
w_{2} (x) &= \tau^{-\alpha-1}\exp\left(-\frac{nx}{t (1-t)}\right) y_{1} (\tau^2 x).
\end{aligned}
\end{equation}
where we have put
\begin{equation*} 
    \tau = \tau_n = \frac{n\sqrt{a}}{t}.
    \end{equation*}
We also need the following wronskian relation, see formula 9.6.15 of \cite{AS},
\begin{equation}\label{wronsk}
y_{1} (z)y_{2}' (z)-y_{1}' (z)y_{2} (z)=-\frac{z^{\alpha}}{2},\quad
z\in\C\setminus\R_-.
\end{equation}

Now, we are in a position to define the first transformation of the RH problem
\eqref{Yjump}--\eqref{Yedge}. The aim of the first
transformation is to modify the jump matrix in order to have only one
remaining weight on $\R_+$, as in \eqref{Xjumpold}, which is also simpler than the
weights $w_1$ and $w_2$. Indeed, relations \eqref{wY} and \eqref{wronsk} allow to
remove the modified Bessel functions from the jumps, replacing them by a
simple power function.
The price we have to pay for the simpler jump on $\R_+$ will be a new
jump appearing on $\R_-$ and on two contours $\Delta_2^{\pm}$ that are
taken as in Figure \ref{fig:first_transformation}.
We take $\Delta_2^+$ as an unbounded contour in the second quadrant asymptotic
to a ray $\arg z = \theta$ for some $\theta \in (\pi/2, \pi)$ as $z \to \infty$,
and meeting the real axis at the point $p_- \leq 0$. Its mirror image
in the real axis is the contour $\Delta_2^-$. The contours $\Delta_2^{\pm}$
are the boundary of a domain containing the interval $(-\infty, p_-)$ and we
refer to this domain as the lens around $(-\infty,p_-)$.

\begin{figure}[htb]
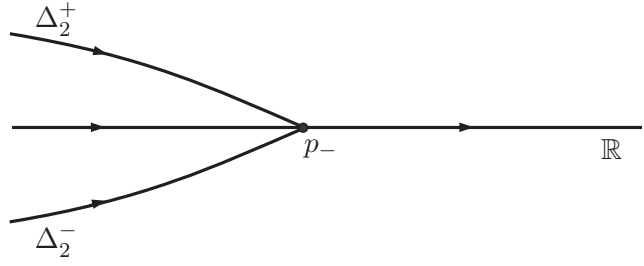

\centering \begin{overpic}[scale=1.2]%
{firstcontour}%
      \put(90,14){$\R $}
       \put(8,33){$\Delta_2^+ $}
       \put(8,1){$\Delta_2^- $}
       \put(47,15){$p_-$}
\end{overpic}
\caption{Contour for the first transformation.}
\label{fig:first_transformation}
\end{figure}

We define for $z \in \mathbb C \setminus \mathbb R$,
\begin{multline} \label{Xhatdef}
\widetilde{X} (z) = C_1
        Y (z)
        \begin{pmatrix}
        1 & 0 & 0 \\
        0 & 1 & 0 \\
        0 & 0 & \tau \end{pmatrix}
        \begin{pmatrix}
        1 & 0 & 0\\
        0 & 2y_{2} (\tau^2 z) & - z^{-\alpha}y_{1} (\tau^2 z)\\
        0 & -2y_{2}' (\tau^2 z) & z^{-\alpha} y_{1}' (\tau^2 z)
        \end{pmatrix}
    \begin{pmatrix}
    1 &0&0\\
    0 &  \tau^{-\alpha}& 0\\
    0 & 0 & -2\pi i \tau^{-\alpha}
    \end{pmatrix}\,,
\end{multline}
where $C_1$ is the constant matrix
\begin{equation} \label{C1def}
    C_1 = \begin{cases}
        \begin{pmatrix} 1 & 0 & 0 \\
        0 & 1 & i \frac{4(\alpha+1)^2-1}{16\tau} \\
        0 & 0 & 1
        \end{pmatrix}
     \begin{pmatrix}
        1 & 0 & 0 \\
        0 & (2\pi \tau)^{-1/2} & 0 \\
        0 & 0 & i (2\pi \tau)^{-1/2}
        \end{pmatrix}, & \text{if $n$ is even}, \\[5mm]
        \begin{pmatrix} 1 & 0 & 0 \\
        0 & 1 & 0 \\
        0 & -\frac{4\alpha^2-1}{16\tau} & 1
        \end{pmatrix}
     \begin{pmatrix}
        1 & 0 & 0 \\
        0 & (2\pi \tau)^{-1/2} & 0 \\
        0 & 0 & i (2\pi \tau)^{-1/2}
        \end{pmatrix}, & \text{if $n$ is odd}.
        \end{cases} \end{equation}
 Note that, in view of the wronskian relation
\eqref{wronsk}, the determinant of the fifth matrix in the right-hand side of
\eqref{Xhatdef} is equal to $\tau^{2\alpha}$. Then it is easy to see that
$\det \widetilde{X}(z) \equiv 1$.
The matrix $\widetilde{X} (z)$ is analytic in $\C\setminus\R$ since the matrix $Y (z)$ is
analytic in $\C\setminus\R_{+}$ and
$y_{1}(\tau^2 z)$ and $y_{2}(\tau^2 z)$ are analytic in $\C\setminus\R_-$.
Now define
\begin{equation} \label{Xdef1}
    X(z) = \widetilde{X}(z)
\end{equation}
for $z$  outside the lens around $(-\infty,p_-)$, and
\begin{equation} \label{Xdef2}
    X(z)  = \widetilde{X}(z) \left( I \mp e^{\pm \alpha \pi i} z^{-\alpha} E_{23} \right)
\end{equation}
for $z$ in the part of the lens bounded by $\Delta^{\pm}_2$
and  $(-\infty,p_-)$. [Recall that $E_{ij}$ is used to denote the elementary matrix \eqref{Eijdef}.]

From \eqref{wY}, \eqref{wronsk}, the jump relations \eqref{jumpY}, and
the fact that $Y (z)$ is the solution of the RH problem
\eqref{Yjump}--\eqref{Yedge}, one derives the jump relations
\eqref{Xjump1}--\eqref{Xjump4} below.
As $z \to 0$, we note the following behavior
\begin{align*}
y_{1} (z) & \sim \frac{1}{\Gamma(\alpha+1)} z^{\alpha+1}, &
y_{2} (z) & \sim \frac{1}{2} \Gamma(\alpha+1), \\
y_{1}' (z) & \sim \frac{1}{\Gamma(\alpha)} z^{\alpha}, &
y_{2}' (z) & \sim
\left\{\begin{array}{ll}
-\frac{1}{2}\Gamma (\alpha),& \alpha > 0, \\
\frac{1}{2} \log (z),& \alpha=0,\\
-\frac{1}{2}\Gamma (-\alpha)z^{\alpha},& \alpha<0,
\end{array}\right.
\end{align*}
which is a consequence of the known behavior of the modified Bessel
functions near 0, see formulas 9.6.7--9.6.9 of \cite{AS}.
This shows that $\widetilde X(z)$ has the same kind of behavior as $Y(z)$
at the origin. The behavior of $X(z)$ near the origin is then
also the same, except in case $p_-= 0$ and $\alpha \geq 0$,
see \eqref{Xdef2}.
The result is that
$X(z)$ is  the solution of the following RH problem:

\begin{proposition} \label{prop:RHforX}
The matrix-valued function $X(z)$ defined by \eqref{Xhatdef}, \eqref{Xdef1},
and \eqref{Xdef2} is the unique solution of the following RH problem.
\begin{enumerate}
\item $X(z)$ is analytic in $\mathbb{C} \setminus (\R \cup \Delta_2^{\pm})$.
\item $X(z)$ possesses continuous boundary values on $(\R \cup \Delta_2^{\pm})\setminus\{0 \}$
denoted by $X_+$ and $X_-$, where $X_+$ and $X_-$ denote the limiting
values of $X(z)$ as $z$ approaches the contour from the left and the right,
according to the orientation on $\R$ and $\Delta_2^{\pm}$ as indicated
in Figure~\ref{fig:first_transformation}, and
\begin{equation}
\label{Xjump1}
X_+(x)=X_-(x) \left( I + x^{\alpha} e^{-\frac{nx}{t(1-t)}} E_{12} \right)
     \quad x\in\R_+,
\end{equation}
\begin{equation}  \label{Xjump2}
X_+(x)=X_-(x)
    \begin{pmatrix}
    1 & 0 & 0\\
        0 & 0                                & -|x|^{-\alpha} \\
        0 & |x|^{\alpha}                     & 0
    \end{pmatrix}, \quad x\in (-\infty,p_-),
\end{equation}
\begin{equation}  \label{Xjump3}
X_+(x)=X_-(x) \left( I + |x|^{\alpha} E_{32} \right)
 \quad x\in (p_-,0),
\end{equation}
\begin{equation} \label{Xjump4}
    X_+(z) = X_-(z) \left( I + e^{\pm \alpha \pi i} z^{-\alpha} E_{23} \right)
\quad z \in \Delta_2^{\pm}.
\end{equation}
\item $X(z)$ has the following behavior near infinity:
\begin{multline} \label{Xasymptotics}
   X(z) =
   \left(I + \mathcal{O}\left(\frac{1}{z}\right) \right)
    \begin{pmatrix}
    1 & 0 & 0\\
    0 & z^{(-1)^n/4} & 0 \\
    0 & 0 &  z^{- (-1)^n/4}
    \end{pmatrix}
     \begin{pmatrix}
    1 & 0 & 0 \\
    0 & \frac{1}{\sqrt{2}} & \frac{1}{\sqrt{2}} i \\
    0 & \frac{1}{\sqrt{2}} i & \frac{1}{\sqrt{2}}
    \end{pmatrix} \\
    \begin{pmatrix}
    1 & 0 & 0 \\
    0 & z^{\alpha/2} & 0 \\
    0 & 0 & z^{-\alpha/2}
    \end{pmatrix}
\begin{pmatrix}
            z^{n}  & 0 & 0 \\
            0 & z^{-n/2} e^{-2n\sqrt{az}/t} & 0 \\
            0 & 0 & z^{-n/2} e^{2n\sqrt{az}/t}
            \end{pmatrix},
\end{multline}
uniformly as $z \to \infty$, $z \in \mathbb C \setminus \mathbb R$.
\item $X(z)$ has the same behavior as $Y(z)$ at
the origin, see \eqref{Yedge}, either if $p_- < 0$ or if $z\to p_-= 0$ outside
the lens around $(-\infty, p_-)$. If $p_- = 0$ and  $z \to 0$ in the lens around $(-\infty, p_-)$, then
\begin{equation} \label{Xedge2}
    X(z) = \left\{ \begin{array}{cl}
    \mathcal O \begin{pmatrix} 1 & |z|^{\alpha} & 1 \\  1 & |z|^{\alpha} & 1 \\ 1 & |z|^{\alpha} & 1 \end{pmatrix}
    & \textrm{if } \alpha < 0, \\
    \mathcal O\begin{pmatrix} 1 & \log |z| & \log |z| \\ 1 & \log |z| & \log |z| \\ 1 & \log |z| & \log |z| \end{pmatrix}
    & \textrm{if } \alpha = 0, \\
    \mathcal O\begin{pmatrix} 1 & 1 & |z|^{-\alpha} \\ 1 & 1 & |z|^{-\alpha} \\1 & 1 & |z|^{-\alpha} \end{pmatrix}
    & \textrm{if } \alpha > 0.
    \end{array} \right. \end{equation}
\end{enumerate}
\end{proposition}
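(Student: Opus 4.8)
The plan is to exploit that $X$ is obtained from $Y$ by a chain of invertible right multiplications, most of them by constant matrices, so that each of the four defining properties of $X$ will follow by direct computation from the corresponding property of $Y$ in \eqref{Yjump}--\eqref{Yedge}, using only the analytic continuation formulas \eqref{jumpY}, \eqref{y1y2prime} and the Wronskian \eqref{wronsk} for $y_1,y_2$. First, analyticity: since $Y$ is analytic in $\C\setminus\R_+$, the functions $z\mapsto y_{1,2}(\tau^2 z)$ are analytic in $\C\setminus\R_-$, and $C_1$ is constant, the product \eqref{Xhatdef} defines a matrix $\widetilde X$ analytic in $\C\setminus\R$; on each of the two components of the interior of the lens around $(-\infty,p_-)$ the extra factor $I\mp e^{\pm\alpha\pi i}z^{-\alpha}E_{23}$ is analytic, so $X$ is analytic in $\C\setminus(\R\cup\Delta_2^\pm)$. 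Moreover $\det X\equiv1$: the Wronskian \eqref{wronsk} makes the determinant of the Bessel matrix in \eqref{Xhatdef} equal to $z^\alpha$, which is cancelled by the surrounding diagonal factors (as already noted before the proposition), so $\det\widetilde X\equiv1$, and the lens factors are unipotent.

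For the jumps, on $\R_+$ we are outside the lens, hence $X=\widetilde X$, and the only jump comes from $Y_+=Y_-(I+w_1E_{12}+w_2E_{13})$; conjugating this unipotent matrix by the remaining (block diagonal, scalar $1$ $\oplus$ a $2\times2$ block) factors of \eqref{Xhatdef} and substituting \eqref{wY}, the Wronskian relation \eqref{wronsk} collapses the row $(w_1,w_2)$ to $\bigl(x^\alpha e^{-nx/(t(1-t))},\,0\bigr)$, which is precisely \eqref{Xjump1}. On $\R_-$ the matrix $Y$ is analytic, so the jump of $\widetilde X$ is produced solely by the Bessel jumps \eqref{jumpY} and the branch cut of $z^{-\alpha}$; a short calculation, once more invoking \eqref{wronsk}, shows it equals $I+|x|^\alpha E_{32}$ on all of $\R_-$, and on $(p_-,0)$ this is \eqref{Xjump3}. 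On $(-\infty,p_-)$ one additionally conjugates by the lens factors, and since $e^{\pm\alpha\pi i}(z^{-\alpha})_\pm=|x|^{-\alpha}$ there, the elementary identity $(I-cE_{23})(I+c^{-1}E_{32})(I-cE_{23})=E_{11}+c^{-1}E_{32}-cE_{23}$ with $c=|x|^{-\alpha}$ produces \eqref{Xjump2}. Finally, on $\Delta_2^\pm$ the matrix $\widetilde X$ is analytic, so the jump of $X$ is just the lens factor $I+e^{\pm\alpha\pi i}z^{-\alpha}E_{23}$ of \eqref{Xdef2}, i.e.\ \eqref{Xjump4}; one also verifies the consistency condition that the incident jump matrices at the triple point $p_-$ multiply (in cyclic order) to the identity.

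Near infinity I would substitute the large-argument expansions $I_\nu(2\tau\sqrt z)\sim(4\pi\tau\sqrt z)^{-1/2}e^{2\tau\sqrt z}(1-\tfrac{4\nu^2-1}{16\tau\sqrt z}+\cdots)$ and the companion $K_\nu$ expansion (with $e^{-2\tau\sqrt z}$) into \eqref{y1y2}, \eqref{y1y2prime}, multiply by \eqref{Yasym}, and check that the constants defining $C_1$ together with the choice \eqref{n1n2} of $n_1,n_2$ are exactly what is needed for the first correction term to be absorbed into $I+\mathcal{O}(1/z)$ and for the diagonal and rotation factors of \eqref{Xasymptotics} to emerge. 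Near $0$ I would insert the small-argument expansions of $y_1,y_2,y_1',y_2'$ recorded just before the proposition into \eqref{Xhatdef}; after the elementary row and column cancellations one sees that $\widetilde X$ inherits the entrywise $\mathcal{O}$-behavior \eqref{Yedge} of $Y$, hence so does $X$ outside the lens, while inside the lens (possible only if $p_-=0$) the column operation $\mp e^{\pm\alpha\pi i}z^{-\alpha}E_{23}$ alters the third column and, after the evident case split on the sign of $\alpha$, yields \eqref{Xedge2}. For uniqueness, the map $Y\mapsto X$ in \eqref{Xhatdef}--\eqref{Xdef2} is invertible with constant $C_1$, so solutions of the $X$-problem correspond bijectively to solutions of the $Y$-problem, which is unique by the result of Coussement and Van Assche recalled in Subsection~\ref{subsection23}; alternatively, $\det X\equiv1$ together with a Liouville argument (the singularities at $0$ and $p_-$ permitted by \eqref{Yedge}, \eqref{Xedge2} are removable for the ratio of two solutions) gives the same conclusion.

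The step I expect to be most delicate is the jump on the negative axis: one has to keep careful track of the principal branch of $z^{-\alpha}$ across $\R_-$ simultaneously with the Bessel jump relations \eqref{jumpY}, and then recognize that the Wronskian \eqref{wronsk} collapses everything to the single power $|x|^\alpha$; the bookkeeping over the two halves of the lens is what turns the simple triangular jump $I+|x|^\alpha E_{32}$ into the off-diagonal matrix \eqref{Xjump2}. The asymptotic analysis at infinity is longer but presents no conceptual difficulty once the Bessel expansions and the explicit form of $C_1$ are in hand.
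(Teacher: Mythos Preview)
Your approach is essentially the paper's: items 1, 2, 4 by direct calculation from the definitions and the Bessel identities, item 3 by inserting the large-$z$ expansions of $I_\nu,K_\nu$ and checking that $C_1$ absorbs the leading correction. Your account of the jump computations is in fact more detailed than the paper's, which simply declares items 1, 2, 4 ``straightforward.''

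There is one subtlety you have not flagged, and the paper devotes a full paragraph to it. The asymptotic expansion of $I_\nu(2\tau\sqrt z)$ (hence of $y_1,y_1'$) is valid only for $|\arg z|<\pi$ and is \emph{not} uniform up to the negative real axis; the recessive solution picks up a Stokes contribution there. Consequently your plan ``substitute the large-argument expansions \ldots\ and check'' gives \eqref{Xasymptotics} uniformly only \emph{outside} the lens, where $X=\widetilde X$. Inside the lens one must instead observe that the lens factor in \eqref{Xdef2} replaces the third column $(-z^{-\alpha}y_1,\,z^{-\alpha}y_1')$ by $(-z^{-\alpha}(y_1\mp\tfrac{1}{\pi i}e^{\pm\alpha\pi i}y_2),\,z^{-\alpha}(y_1'\mp\tfrac{1}{\pi i}e^{\pm\alpha\pi i}y_2'))$, and these combinations are precisely the Hankel functions $H_{\alpha+1}^{(1,2)},H_\alpha^{(1,2)}$, whose asymptotics \emph{do} hold uniformly in the relevant sectors. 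This is the real reason the lens is opened already at this stage---not merely to manufacture the jump \eqref{Xjump4}, but to make the asymptotics at infinity uniform in $\C\setminus\R$. Once you add this observation, your argument is complete and matches the paper's.
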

\begin{proof}
Statements in items 1, 2 and 4 are proved by straightforward calculations.
It only remains to check the asymptotic behavior at infinity
given in item 3.
This follows from the asymptotic expansions
\begin{equation}\label{asympY1}
\begin{aligned}
y_{1}(z) &
=\frac{1}{2\sqrt{\pi}}z^{\frac{\alpha}{2} +\frac{1}{4}}e^{2\sqrt{z}}
\left(1- \frac{4(\alpha+1)^2-1}{16\sqrt{z}} + \frac{(4(\alpha+1)^2-1)(4(\alpha+1)^2-9)}{512 z}
+ \mathcal{O}\left(\frac{1}{z^{3/2}}\right)\right), \\
y_{1}'
(z) &
=\frac{1}{2\sqrt{\pi}}z^{\frac{\alpha}{2}-\frac{1}{4}}e^{2\sqrt{z}}
\left(1- \frac{4\alpha^2-1}{16\sqrt{z}} + \frac{(4\alpha^2-1)(4\alpha^2-9)}{512 z}
+ \mathcal{O}\left(\frac{1}{z^{3/2}}\right)\right),
\end{aligned}
\end{equation}
as $z \to \infty$, $| \arg z| < \pi$, and
\begin{equation} \label{asympY2}
\begin{aligned}
y_{2} (z) & =\frac{\sqrt{\pi}}{2}
z^{\frac{\alpha}{2}+ \frac{1}{4}}e^{-2\sqrt{z}}
\left(1+\frac{4(\alpha+1)^2-1}{16\sqrt{z}} + \frac{(4(\alpha+1)^2-1)(4(\alpha+1)^2-9)}{512 z}
+ \mathcal{O}\left(\frac{1}{z^{3/2}}\right)\right), \\
y_{2}' (z) & =-\frac{\sqrt{\pi}}{2}
z^{\frac{\alpha}{2}-\frac{1}{4}}e^{-2\sqrt{z}}
\left(1+ \frac{4\alpha^2-1}{16\sqrt{z}} + \frac{(4\alpha^2-1)(4\alpha^2-9)}{512 z}
+ \mathcal{O}\left(\frac{1}{z^{3/2}}\right)\right),
\end{aligned}
\end{equation}
as $z\to\infty$, $|\arg z|< 3\pi$.
These formulas are consequences of the corresponding asymptotic
expansions of the modified Bessel functions, see formulas (9.7.1)--(9.7.4) of \cite{AS}.

It follows from \eqref{asympY1} and \eqref{asympY2} that
\begin{align} \label{Az1}
A(z) & :=
        \begin{pmatrix}
         1 & 0 \\
         0 & \tau \end{pmatrix}
        \begin{pmatrix}
         2y_{2} (\tau^2 z) & - z^{-\alpha}y_{1} (\tau^2 z)\\
        -2y_{2}' (\tau^2 z) & z^{-\alpha} y_{1}' (\tau^2 z)
        \end{pmatrix}
    \begin{pmatrix}
      \tau^{-\alpha}& 0\\
       0 & -2\pi i \tau^{-\alpha}
    \end{pmatrix} \\  \nonumber
    & = \sqrt{\pi \tau}  z^{\sigma_3/4}
    \left[ \begin{pmatrix} 1 & i \\ 1 & -i \end{pmatrix} +
    \frac{D_1}{z^{1/2}} \begin{pmatrix} 1 & -i \\ i & -1 \end{pmatrix}
    + \frac{D_2}{z} \begin{pmatrix} 1 & i \\ i & 1 \end{pmatrix} + \mathcal{O}(z^{-3/2})\right]
    z^{\alpha \sigma_3/2} e^{-2\tau \sqrt{z} \sigma_3}
    \end{align}
as $z \to \infty$, $|\arg z| < \pi$, where
 $D_1$ and $D_2$ are diagonal matrices
\begin{equation*}
\begin{aligned}
    D_1 & = \frac{1}{16\tau} \begin{pmatrix} 4(\alpha+1)^2-1 & 0 \\ 0 & -i(4\alpha^2-1) \end{pmatrix}, \\
    D_2 & = \frac{1}{512 \tau^2}
    \begin{pmatrix} (4(\alpha+1)^2-1)(4(\alpha+1)^2-9) & 0 \\ 0 & -i(4\alpha^2-1)(4\alpha^2-9) \end{pmatrix}
\end{aligned}
\end{equation*}
and $\sigma_3 = \left(\begin{smallmatrix} 1 & 0 \\ 0 & - 1 \end{smallmatrix} \right)$
is the third Pauli matrix.
Thus
\begin{equation} \label{Az2}
    A(z) = \sqrt{\pi \tau} z^{\sigma_3/4}
    \left[ \begin{pmatrix} 1 & 0 \\ 0 & -i \end{pmatrix}
        + \frac{D_1 \sigma_2}{z^{1/2}} + \frac{D_2}{z} + \mathcal{O}(z^{-3/2})\right]
    \begin{pmatrix} 1 & i \\ i & 1 \end{pmatrix}
    z^{\alpha \sigma_3/2} e^{-2\tau \sqrt{z} \sigma_3}
\end{equation}
where $\sigma_2 = \left(\begin{smallmatrix} 0 & -i \\ i & 0 \end{smallmatrix} \right)$.
Now $\frac{D_2}{z}$ commutes with $z^{\sigma_3/4}$ since both are diagonal matrices.
We also have
\[ z^{\sigma_3/4} \frac{D_1 \sigma_2}{z^{1/2}}  = D_1 \begin{pmatrix} 0 & -i \\ i z^{-1} & 0
\end{pmatrix} z^{\sigma_3/4}. \]
The result is that \eqref{Az2} leads to
\begin{align}
\nonumber
    A(z) & = \sqrt{\pi \tau} \left[\begin{pmatrix} 1 & 0 \\ 0 & -i \end{pmatrix} +
    D_1 \begin{pmatrix} 0 & -i \\ 0 & 0 \end{pmatrix}
    + \mathcal{O}(z^{-1})\right] z^{\sigma_3/4}
    \begin{pmatrix} 1 & i \\ i & 1 \end{pmatrix}
    z^{\alpha \sigma_3/2} e^{-2\tau \sqrt{z} \sigma_3} \\
    & = \sqrt{2\pi \tau}
    \begin{pmatrix} 1 & -i\frac{4(\alpha+1)^2-1}{16\tau} \\ 0 & -i \end{pmatrix}
    \left( I + \mathcal{O}(z^{-1})\right)z^{\sigma_3/4}
    \frac{1}{\sqrt{2}}\begin{pmatrix} 1 & i \\ i & 1 \end{pmatrix}
    z^{\alpha \sigma_3/2} e^{-2\tau \sqrt{z} \sigma_3}
    \label{Az3}
\end{align}
as $z \to \infty$, $|\arg z| < \pi$.

Now if $n$ is even we use \eqref{Yasym} with $n_1 = n_2 = n/2$, see \eqref{n1n2},
along with
\eqref{Az1},  \eqref{Az3} in \eqref{Xhatdef}--\eqref{Xdef1} to find
that \eqref{Xasymptotics} holds as $z \to \infty$ in the region exterior
to $\Delta_2^{\pm}$. The asymptotics is uniform in that region.

If $n$ is odd then $n_1 = n/2 + 1/2$ and $n_2 = n/2 - 1/2$, see \eqref{n1n2}.
Then we need to analyze $z^{-\sigma_3/2} A(z)$ with $A$ given by \eqref{Az1}.
A computation similar to the one that led to \eqref{Az3} gives us
\[ z^{-\sigma_3/2} A(z) = \sqrt{2\pi \tau}
    \begin{pmatrix} 1 & 0 \\ \frac{4\alpha^2-1}{16\tau} & -i \end{pmatrix}
    \left( I + \mathcal{O}(z^{-1})\right)z^{\sigma_3/4}
    \frac{1}{\sqrt{2}}\begin{pmatrix} 1 & i \\ i & 1 \end{pmatrix}
    z^{\alpha \sigma_3/2} e^{-2\tau \sqrt{z} \sigma_3}
\]
and \eqref{Xasymptotics} follows as well, taking into account the different
formula \eqref{C1def} for the case $n$ is odd.

The asymptotic formulas \eqref{asympY1} are not valid uniformly up to the negative
real axis. The special combination $y_1 - \frac{1}{\pi i} e^{\alpha \pi i} y_2$
however, does have the asymptotics \eqref{asympY1} uniformly
for $\pi/2 < \arg z \leq \pi$
and $y_1 + \frac{1}{\pi i} e^{-\alpha \pi i} y_2$
has the asymptotics \eqref{asympY1} uniformly for $-\pi \leq \arg z < -\pi/2$.
This can be seen from the formulas that connect the
various Bessel functions (combine formulas 9.1.3-4, 9.1.35, 9.6.3-4 of \cite{AS})
\begin{align*}
    y_1(z) - \frac{1}{\pi i} e^{\alpha \pi i} y_2(z) & =
    z^{(\alpha +1)/2} H_{\alpha+1}^{(1)}(2 \sqrt{z} e^{-\pi i/2}), \\
    y_1'(z) - \frac{1}{\pi i} e^{\alpha \pi i} y_2'(z) & =
    z^{\alpha/2} H_{\alpha}^{(1)}(2 \sqrt{z} e^{-\pi i/2}), \\
    y_1(z) + \frac{1}{\pi i} e^{-\alpha \pi i} y_2(z) & =
    z^{(\alpha +1)/2} H_{\alpha+1}^{(2)}(2 \sqrt{z} e^{\pi i/2}), \\
    y_1'(z) + \frac{1}{\pi i} e^{-\alpha \pi i} y_2'(z) & =
    z^{\alpha/2} H_{\alpha}^{(2)}(2 \sqrt{z} e^{\pi i/2}),
\end{align*}
where $H_{\alpha}^{(1)}$ and $H_{\alpha}^{(2)}$ are the Hankel
functions, and the asymptotic expansions (see \cite[9.2.7-10]{AS})  of the Hankel
functions in the upper and lower half-planes, respectively.
Hence
\begin{equation} \label{asympY3}
\begin{aligned}
y_1(z) & \mp \frac{1}{\pi i} e^{\pm \alpha \pi i} y_2(z) \\
    & =\frac{1}{2\sqrt{\pi}}z^{\frac{\alpha}{2} +\frac{1}{4}}e^{2\sqrt{z}}
      \left(1- \frac{4(\alpha+1)^2-1}{16\sqrt{z}} + \frac{(4(\alpha+1)^2-1)(4(\alpha+1)^2-9)}{512 z}
    + \mathcal{O}\left(\frac{1}{z^{3/2}}\right)\right), \\
y_{1}'(z) & \mp \frac{1}{\pi i} e^{\pm \alpha \pi i} y_2'(z) \\
     & =\frac{1}{2\sqrt{\pi}}z^{\frac{\alpha}{2}-\frac{1}{4}}e^{2\sqrt{z}}
    \left(1- \frac{4\alpha^2-1}{16\sqrt{z}} + \frac{(4\alpha^2-1)(4\alpha^2-9)}{512 z}
    + \mathcal{O}\left(\frac{1}{z^{3/2}}\right)\right),
\end{aligned}
\end{equation}
uniformly as $z \to \infty$ in the region bounded by $\Delta_2^{\pm}$ and the negative
real axis.
Using the asymptotics \eqref{asympY2} and \eqref{asympY3},
and the definition \eqref{Xdef2} of $X(z)$ in the regions bounded by $\Delta_2^{\pm}$
and the negative real axis, we obtain by the same calculations that \eqref{Xasymptotics}
holds uniformly as $z \to \infty$ in these regions as well.

This completes the proof of Proposition \ref{prop:RHforX}.
\end{proof}

\section{The Riemann surface and the second transformation of the RH problem}\label{section4}
The Riemann surface $\RR$ for the algebraic equation \eqref{RSequation4}
plays an important role in the next transformation of the RH problem.
We repeat it here in the form \eqref{RSequation3}
\begin{equation}\label{RSequation5}
    z =  \frac{1- k\zeta}{\zeta (1-t(1-t)\zeta)^{2}}, \qquad
    k= (1-t)(t-a(1-t)).
\end{equation}

There are three inverse functions to \eqref{RSequation5}, which we choose
such that as $z\to\infty$,
\begin{align}
\zeta_{1} (z) & = \frac{1}{z}+ \frac{(1-t)(t+a(1-t))}{z^2} + \mathcal{O} \left(\frac{1}{z^{3}} \right), \label{zeta1}\\
\zeta_{2} (z) & = \frac{1}{t (1-t)}-\frac{\sqrt{a}}{tz^{1/2}}-\frac{1}{2z} -
\frac{t+4a(1-t)}{8\sqrt{a}z^{3/2}}  - \frac{(1-t)(t+a(1-t))}{2z^2}+\mathcal{O} \left(\frac{1}{z^{5/2}} \right), \label{zeta2}\\
\zeta_{3} (z) & = \frac{1}{t (1-t)}+\frac{\sqrt{a}}{tz^{1/2}}-\frac{1}{2z}+
\frac{t+4a(1-t)}{8\sqrt{a}z^{3/2}}-\frac{(1-t)(t+a(1-t))}{2z^2} +\mathcal{O} \left(\frac{1}{z^{5/2}} \right).\label{zeta3}
\end{align}
Here, as in the rest of the paper, all fractional powers are taken
as principal branch, that is, positive on $\R_+$, with the
branch cut along $\R_-$.
The behavior of these functions for real values of $z$ can be deduced from
Figure \ref{fig:mappingzeta} which shows the graph of $z = z(\zeta)$, $\zeta \in \mathbb R$,
and which also indicates the branches of the inverses $\zeta = \zeta_k(z)$ for real $z$.

It is straightforward to check that the discriminant of equation
\eqref{RSequation4} is equal (up to a non-vanishing factor depending only on $t$)
to the polynomial in the left hand side of \eqref{boundaryequation}.
Its three roots along with the point at infinity constitute the four branch points
of the Riemann surface $\RR$. Analyzing the signs of the coefficients in \eqref{boundaryequation}
it is easy to show that, according to the value of $t\in (0,1)$ with respect
to the critical value $t=t^{*}=a/ (a+1)$,
the following cases arise (see Figure \ref{fig:mappingzeta}):
\begin{itemize}
\item Case 1: $t\in(0,t^*)$, i.e., $k < 0$. The Riemann surface $\RR$
has three simple
real branch points $0<p<q$, plus a simple branch point at infinity.
This is the left-most graph  in Figure \ref{fig:mappingzeta}.
\item Case 2: $t\in (t^*,1)$, i.e., $k > 0$. The Riemann surface $\RR$
has three simple branch points $p<0<q$, plus a simple branch point at infinity.
This is the right-most graph in Figure \ref{fig:mappingzeta}.
\item Case 3: $t=t^{*}$, i.e., $k=0$. This is the critical case
where the Riemann surface $\RR$
has two real branch points, 0 and $q>0$, 0 being degenerate (of order 2), and
$q$ being simple. The point at infinity is still a simple branch point
of $\RR$.
\end{itemize}
These assertions coincide with the statement of Proposition \ref{prop:RiemannSurface}.
The rest of the assertions of Corollary \ref{cor:boundaryCurve} is a consequence of
straightforward although tedious computations based on equation \eqref{boundaryequation}.

\begin{figure}[htb]
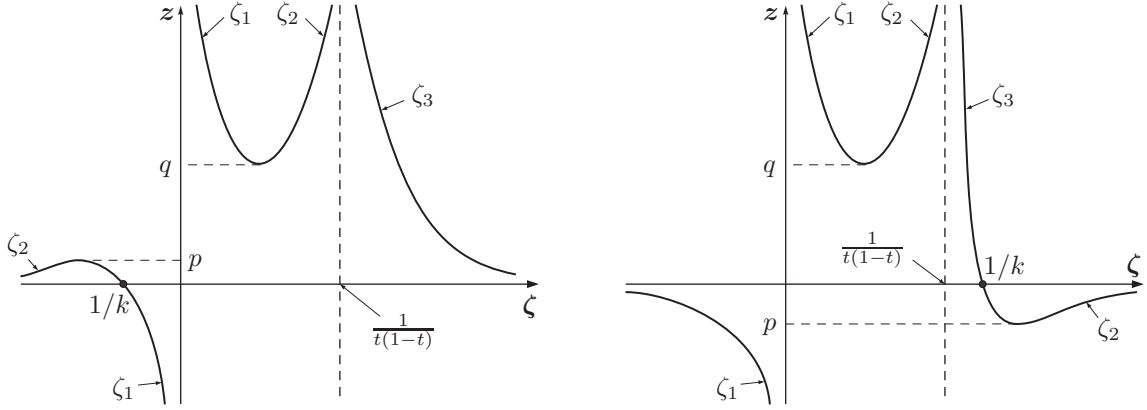

\centering \begin{overpic}[scale=1]%
{mappingzeta}%
      \put(32,8){\small $\frac{1}{t(1-t)}$}
       \put(72,15){\small $\frac{1}{t(1-t)}$}
       \put(16.5,14){\small $p $}
       \put(65.5,8.5){\small $p $}
       \put(14,22){\small $q $}
       \put(65.5,22){\small $q $}
       \put(45,10){\small $\boldsymbol \zeta $}
       \put(96.5,13.5){\small $\boldsymbol \zeta $}
       \put(14,35){\small $\boldsymbol z $}
       \put(65.5,35){\small $\boldsymbol z $}
       \put(20,35){\small $\zeta _1$}
       \put(8,10){\small $1/k$}
       \put(84.3,13.3){\small $1/k$}
       \put(24,35){\small $\zeta _2$}
       \put(10,3){\small $\zeta _1$}
       \put(1,15.3){\small $\zeta _2$}
       \put(35.5,28){\small $\zeta _3$}
       \put(85,28){\small $\zeta _3$}
       \put(61.5,3){\small $\zeta _1$}
       \put(71.5,35){\small $\zeta _1$}
       \put(75.5,35){\small $\zeta _2$}
       \put(94,8){\small $\zeta _2$}
\end{overpic}
\caption{Plots of $z = \frac{1-k\zeta}{\zeta(1-t(1-t)\zeta)^2}$,
$\zeta \in \R$, for Case 1 ($k < 0$; left) and Case 2 ($k > 0$; right).}
\label{fig:mappingzeta}
\end{figure}

In this paper, we shall analyze Case 1 and Case 2. The sheet structure of $\RR$ is shown in Figure \ref{Riemann1}.
\begin{figure}[htb]
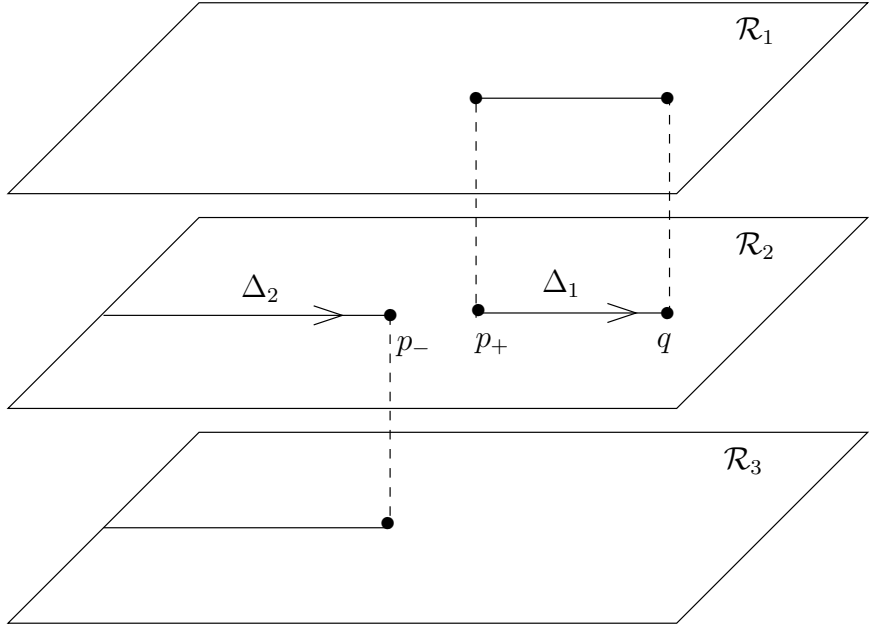

\hfil
 \input riemannsurf.pstex_t
\hfil
\caption{The Riemann surface $\RR$, $p_-=0$ and $p_+=p$ in Case 1,
$p_-=p$ and $p_+=0$ in Case 2.}
 \label{Riemann1}
\end{figure}
As before we use $p_- = \min(p,0)$ and $ p_+ = \max(p,0)$.
The sheets $\RR_{1}$ and $\RR_{2}$ are glued together along the cut
$\Delta_{1}= [p_+,q]$ and the sheets $\RR_{3}$ and $\RR_{2}$ are
glued together along the cut
$\Delta_{2}= (-\infty,p_-]$.
The functions $\zeta_{1}$, $\zeta_{2}$,
$\zeta_{3}$ are defined and analytic on the sheets $\RR_{1}$, $\RR_{2}$, and
$\RR_{3}$ respectively, and we have the jump relations:
\begin{equation}\label{jump-zeta}
\begin{aligned}
\zeta_{1\pm}(x) = \zeta_{2\mp}(x),\qquad  x\in \Delta_{1},\\
\zeta_{2\pm}(x) = \zeta_{3\mp}(x),\qquad  x\in \Delta_{2}.
\end{aligned}
\end{equation}
On $\Delta_{2}$, the function $\zeta_{1}$ is real and
$\zeta_{2}$ and $\zeta_{3}$ are complex conjugate, while on
$\Delta_{1}$, the function $\zeta_{3}$ is real and
$\zeta_{1}$ and $\zeta_{2}$ are complex conjugate, so that
\begin{equation}\label{zetaconj}
\begin{aligned}
    \zeta_{1\pm}(x)=\overline{\zeta_{2\pm} (x)},\qquad x\in \Delta_{1}, \\
    \zeta_{2\pm}(x)=\overline{\zeta_{3\pm}(x)}, \qquad x \in \Delta_2.
\end{aligned}
\end{equation}
Near the origin, one may check from \eqref{RSequation5} that, as $z\to 0$,
\begin{equation}\label{zeta-origin1}
   \zeta_1(z) =  \frac{1}{k} + \mathcal{O}\left({z}\right),
    \quad
   \zeta_2(z) =  -\frac{k_1}{z^{1/2}} +k_2+ \mathcal{O}\left(z^{1/2}\right),
\quad
   \zeta_3(z) = \frac{k_1}{z^{1/2}} + k_2 + \mathcal{O}\left(z^{1/2}\right),
\end{equation}
in Case 1 ($p_-=0$), while
\begin{equation}\label{zeta-origin2}
   \zeta_1(z) =  -\frac{ k_1}{(-z)^{1/2}} + k_2 + \mathcal{O}\left(z^{1/2}\right),
\quad
   \zeta_2(z) =  \frac{ k_1}{(-z)^{1/2}}+k_2 + \mathcal{O}\left(z^{1/2}\right),
\quad
   \zeta_3(z) = \frac{1}{k} + \mathcal{O}\left({z}\right),
\end{equation}
in Case 2 ($p_+=0$), where we have set $k = (1-t)(t-a(1-t))$ as before, and
\[
 k_1=-\frac{\sqrt{|k|}}{t(1-t)},\qquad
 k_2=\frac{1}{t(1-t)}-\frac{1}{2k}.
\]

Next, we introduce the integrals of the $\zeta$-functions,
\begin{align}\label{def-lambda1}
   \lambda_{1}(z) & = \int_{q}^z\zeta_1(s)ds, \\
\label{def-lambda2}
   \lambda_{2}(z) & = \int_{q}^z\zeta_2(s)ds, \\
\label{def-lambda3}
   \lambda_{3}(z) & = \int_{p_-}^z\zeta_3(s)ds+\lambda_{2_{-}} (p_-).
\end{align}
The functions $\lambda_{1}$ and $\lambda_{2}$ are defined and analytic in
$\C\setminus (-\infty, q]$, and the function $\lambda_{3}$ is defined and
analytic in $\C\setminus (-\infty,p_-]$.
By \eqref{zeta-origin1}--\eqref{zeta-origin2},
these functions are bounded in the neighborhood of each branch point $p_-$, $p_+$, $q$. By \eqref{zetaconj},
\begin{equation}\label{lambdaconj}
    \lambda _{1\pm}(x) = \overline{ \lambda _{2\pm}(x)}\,, \quad x \in \Delta_1\,.
\end{equation}

From \eqref{zeta1}--\eqref{zeta3}, it follows that,
as $z\to\infty$,
\begin{align}
\lambda_{1} (z) & = \log z+\ell_1 -\frac{(1-t)(t+a(1-t))}{z}+
\mathcal{O} \left(\frac{1}{z^2} \right), \label{lambda1}\\
\lambda_{2} (z) & = \frac{z}{t (1-t)}-\frac{2\sqrt{az}}{t}
-\frac{1}{2}\log z+\ell_{2} \nonumber \\
& \qquad + \frac{t+4a(1-t)}{4\sqrt{az}}  + \frac{(1-t)(t+a(1-t))}{2z}
+\mathcal{O} \left(\frac{1}{z^{3/2}} \right),\label{lambda2}\\
\lambda_{3} (z) & = \frac{z}{t (1-t)}+\frac{2\sqrt{az}}{t}
-\frac{1}{2}\log z+\ell_{3} \nonumber \\
& \qquad -\frac{t+4a(1-t)}{4\sqrt{az}}  + \frac{(1-t)(t+a(1-t))}{2z}
+\mathcal{O} \left(\frac{1}{z^{3/2}} \right),\label{lambda3}
\end{align}
where $\ell_{j}$, $j=1,2,3$, are certain integration constants.

We will need the following relation between $\ell_2$ and $\ell_3$.
\begin{lemma} \label{lem:ell23}
We have $\ell_3 = \ell_2 + \pi i$.
\end{lemma}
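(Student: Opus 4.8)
The plan is to compute the difference $\ell_3-\ell_2$ by comparing the two integral representations \eqref{def-lambda2} and \eqref{def-lambda3} together with the expansions \eqref{lambda2} and \eqref{lambda3}. Since $\lambda_2$ and $\lambda_3$ differ only in the sign of the $2\sqrt{az}/t$ and $(t+4a(1-t))/(4\sqrt{az})$ terms in their asymptotics, but share the same base point structure only indirectly, the constants $\ell_2$ and $\ell_3$ encode the discrepancy between integrating $\zeta_2$ from $q$ and integrating $\zeta_3$ from $p_-$ (with the correction $\lambda_{2_-}(p_-)$ built into the definition of $\lambda_3$). First I would use the jump relation $\zeta_{2\pm}(x)=\zeta_{3\mp}(x)$ on $\Delta_2=(-\infty,p_-]$ from \eqref{jump-zeta}, which says $\zeta_2$ and $\zeta_3$ are analytic continuations of each other across the cut. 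Concretely, from \eqref{def-lambda3} and the continuation, $\lambda_3$ is the analytic continuation of $\lambda_2$ around the branch point $p_-$: going from one side of $\Delta_2$ to the other interchanges sheets $\RR_2$ and $\RR_3$, so $\lambda_{3}$ on $\RR_3$ is what $\lambda_2$ becomes after encircling $p_-$.

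The key computational step is then to track the effect of this continuation on the explicit asymptotic expansions. Write $\lambda_2(z)-\lambda_3(z)$ using \eqref{lambda2}, \eqref{lambda3}: the polynomial, logarithmic, and $1/z$ terms cancel, the half-integer-power terms add up, and we are left with
\[
    \lambda_2(z)-\lambda_3(z) = -\frac{4\sqrt{az}}{t} + \frac{t+4a(1-t)}{2\sqrt{az}} + (\ell_2-\ell_3) + \mathcal{O}(z^{-3/2}).
\]
On the other hand, I would compute $\lambda_2(z)-\lambda_3(z)$ directly from the definitions: by \eqref{def-lambda2} and \eqref{def-lambda3},
\[
    \lambda_2(z)-\lambda_3(z) = \int_q^z \zeta_2(s)\,ds - \int_{p_-}^z \zeta_3(s)\,ds - \lambda_{2_-}(p_-)
    = \int_q^z (\zeta_2-\zeta_3)(s)\,ds + \int_q^{p_-}\zeta_3(s)\,ds - \lambda_{2_-}(p_-),
\]
where the first integral is taken along a path avoiding $(-\infty,q]$. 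The quantity $\zeta_2-\zeta_3$ behaves, by \eqref{zeta2}–\eqref{zeta3}, like $-\frac{2\sqrt a}{t z^{1/2}} - \frac{t+4a(1-t)}{4\sqrt a z^{3/2}} + \mathcal O(z^{-5/2})$, so its antiderivative from a fixed point is $-\frac{4\sqrt{az}}{t} + \frac{t+4a(1-t)}{2\sqrt{az}} + C + \mathcal O(z^{-3/2})$. Matching the $z$-dependent terms confirms consistency; matching the constant terms gives $\ell_2-\ell_3 = C$ plus the constant pieces of the fixed-point integrals, and the whole point is that all the algebraic (non-$\pi i$) contributions must cancel, leaving only a purely imaginary residue.

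The cleanest way to extract the $\pi i$ is via a contour/monodromy argument rather than brute-force constant-chasing. The function $\lambda_2+\lambda_3$ is, by \eqref{lambda2}+\eqref{lambda3}, single-valued up to the $\log z$ terms: $\lambda_2(z)+\lambda_3(z) = \frac{2z}{t(1-t)} - \log z + (\ell_2+\ell_3) + \mathcal O(z^{-1})$, and since $\zeta_2+\zeta_3$ has a simple behavior, one can identify $\ell_2+\ell_3$ cleanly; meanwhile, following $\lambda_2$ analytically around $p_-$ turns it into $\lambda_3$, and the branch of $(-z)^{1/2}$ versus $z^{1/2}$ near the origin — visible in \eqref{zeta-origin1} versus \eqref{zeta-origin2} — together with the $-\frac12\log z$ term in \eqref{lambda2}–\eqref{lambda3}, produces exactly a jump of $\pi i$ when the argument of $z$ changes by the amount dictated by passing from the $\RR_2$-side to the $\RR_3$-side of the cut $\Delta_2$ along the negative axis. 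I expect the main obstacle to be bookkeeping the branch of the logarithm and of $\sqrt{z}$ consistently across the cut $(-\infty,p_-]$ while using the definition of $\lambda_3$ with its built-in additive constant $\lambda_{2_-}(p_-)$; one must be careful that $\lambda_{2_-}(p_-)$ refers to the boundary value from a specified side, and that the deformation of the path of integration picks up the $\log$ monodromy $2\pi i$ only in the right combination to yield a net $\pi i$ after the $-\tfrac12\log z$ coefficient is accounted for. Once the branch conventions are pinned down, the identity $\ell_3=\ell_2+\pi i$ falls out by comparing the two expressions for $\lambda_2-\lambda_3$ (or $\lambda_2$ continued to $\RR_3$) term by term.
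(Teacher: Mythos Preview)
Your overall strategy---compute $\lambda_2-\lambda_3$ once from the integral definitions \eqref{def-lambda2}--\eqref{def-lambda3} and once from the asymptotic expansions \eqref{lambda2}--\eqref{lambda3}, then match constants---is exactly what the paper does. But the proposal stops before the decisive step and the ``monodromy'' paragraph that is meant to replace it does not actually produce the $\pi i$.

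Two concrete issues. First, in $\lambda_2-\lambda_3$ the $-\tfrac12\log z$ terms from \eqref{lambda2} and \eqref{lambda3} \emph{cancel}, so the $\pi i$ cannot come from ``the $-\tfrac12\log z$ coefficient'' or from any $\log z$ monodromy at infinity; your explanation there is pointing at the wrong object. Second, your integral decomposition
\[
\lambda_2(z)-\lambda_3(z)=\int_q^z(\zeta_2-\zeta_3)(s)\,ds+\int_q^{p_-}\zeta_3(s)\,ds-\lambda_{2_-}(p_-)
\]
is fine, but you never evaluate the constant $\int_q^{p_-}\zeta_3-\lambda_{2_-}(p_-)$, and saying ``all the algebraic contributions must cancel, leaving only a purely imaginary residue'' is exactly the step that needs proof. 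The paper closes this gap by a specific trick you are missing: take $z$ on the \emph{lower side} of $\Delta_2$, use the jump relation $\zeta_{2-}=\zeta_{3+}$ on $\Delta_2$ to rewrite
\[
(\lambda_2-\lambda_3)(z)=\int_{p_-}^{z}(\zeta_{3+}-\zeta_{3-})(s)\,ds=\int_{\gamma_{R,\varepsilon}}\zeta_3(s)\,ds,
\]
a contour integral of $\zeta_3$ alone along a keyhole $\gamma_{R,\varepsilon}$ around the cut $(-\infty,p_-]$. Then one subtracts from $\zeta_3$ the explicit tail $\tfrac{1}{t(1-t)}+\tfrac{\sqrt a}{t}(s-p_-)^{-1/2}-\tfrac{1}{2}(s-p_-)^{-1}$ dictated by \eqref{zeta3}; the remainder is $\mathcal O(s^{-3/2})$ and its integral vanishes as $R\to\infty$, the square-root piece yields $-\tfrac{4\sqrt{az}}{t}$, and the pole piece $-\tfrac12(s-p_-)^{-1}$ contributes exactly $-\pi i$ from the small circle around $p_-$. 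That is where the $\pi i$ lives---as the half-residue of $\zeta_3$'s $-\tfrac{1}{2z}$ behavior at infinity, realized locally at $p_-$---not in any $\log z$ branch jump. Once you insert this computation in place of the hand-wave, your argument becomes the paper's.
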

\begin{proof}
By the definition of $\lambda_2$ and $\lambda_3$ we have
for $z=-R$ on the lower side of the cut $\Delta_2$,
\begin{align*}
    (\lambda_2 - \lambda_3)(z) & =
    \int_{(p_-)_-}^z \zeta_2(s) ds - \int_{p_-}^z \zeta_3(s) ds
    = \int_{p_-}^z (\zeta_{2-} - \zeta_{3-})(s) ds  \\
    & = \int_{p_-}^{-R} (\zeta_{3+} - \zeta_{3-})(s) ds
    = \int_{\gamma_{R,\varepsilon}} \zeta_3(s) ds
    \end{align*}
where $\gamma_{R,\varepsilon}$ is a contour from $-R$ to
$p_--\varepsilon$ on the lower side
of $\Delta_2$ continued with the circle around $p_-$
of radius $\varepsilon$, and then back from $p_- - \varepsilon$
to $-R$ on the upper side of $\Delta_2$. Here $\varepsilon > 0$
is taken sufficiently small.
Then we write
\begin{align*}
    (\lambda_2 - \lambda_3)(z) & =
    \int_{\gamma_{R, \varepsilon}}
    \left(\zeta_3(s) - \frac{1}{t(1-t)} - \frac{\sqrt{a}}{t(s-p_-)^{1/2}} + \frac{1}{2(s-p_-)}\right) ds \\
    & \quad + \int_{\gamma_{R,\varepsilon}} \frac{\sqrt{a}}{t(s-p_-)^{1/2}} ds
    - \int_{\gamma_{R,\varepsilon}} \frac{1}{2(s-p_-)} ds.
    \end{align*}
Since the integrand of the first term on the right-hand side is analytic in $\mathbb C \setminus (-\infty,p_-]$
and behaves as $\mathcal{O}(s^{-3/2})$ as $s \to \infty$ (due to \eqref{zeta3}), it follows that the first term tends
to $0$ as $R \to \infty$. For the second term we have
\[ \int_{\gamma_{z,\varepsilon}} \frac{\sqrt{a}}{t(s-p_-)^{1/2}} ds + \frac{4\sqrt{az}}{t} \to 0 \]
as $R \to \infty$, $\varepsilon \to 0$, and the third term is just simply $-\pi i$.
Thus
\begin{equation}\label{labdasatinfty}
     (\lambda_2 - \lambda_3)(z) = -\frac{4\sqrt{az}}{t} - \pi i + o(1)
\end{equation}
which gives the lemma in view of \eqref{lambda2} and \eqref{lambda3}.
\end{proof}

From \eqref{jump-zeta}, the definitions of the $\lambda$-functions, and the
relations
\[
\oint \zeta_{1} (s)ds = 2\pi i,\quad
\oint \zeta_{2} (s)ds = -2\pi i,
\]
(which follow by a residue calculation from the expansion \eqref{zeta1} of $\zeta_1$ at infinity),
where the integrals are taken on a positively oriented closed contour
around $\Delta_{1}$, we check that the following jump relations hold true,
\begin{equation}\begin{aligned}\label{jumplamb}
&\lambda_{1+} (x) =\lambda_{1-} (x) +2\pi i, \quad x\in (-\infty,p_+],\\
&\lambda_{1\pm} (x) =\lambda_{2\mp} (x),  \quad x\in \Delta_{1}= [p_+,q],
\\
&\lambda_{2+}(x)=\lambda_{2-}(x)-2\pi i,  \quad x\in[p_-,p_+],\\
&\lambda_{2+}(x)=\lambda_{3-}(x)-2\pi i,  \quad
\lambda_{2-}(x)=\lambda_{3+}(x), \quad x\in\Delta_{2}= (-\infty,p_-].\\
\end{aligned}
\end{equation}
A straightforward consequence of these relations is the following statement:
\begin{lemma}
\label{lem:analyticity}
Functions $e^{\lambda_{1}(z)}$,
$e^{\lambda_{2}(z)}$ and $e^{\lambda_{3}(z)}$ are analytic and
single--valued outside of $\Delta_1$, $\Delta_1 \cup \Delta_2$,
and $\Delta_{2}$ respectively.
\end{lemma}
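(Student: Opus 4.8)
The plan is to handle the three exponentials separately, in increasing order of difficulty, using only the stated analyticity domains of $\lambda_1,\lambda_2,\lambda_3$, the jump relations \eqref{jumplamb}, and the observation that the $2\pi i$-periodicity of $\exp$ absorbs precisely the $\pm 2\pi i$ jumps of the $\lambda$'s.

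For $e^{\lambda_3}$ there is nothing to prove: by construction $\lambda_3$ is analytic and single-valued on $\C\setminus(-\infty,p_-]=\C\setminus\Delta_2$, hence so is $e^{\lambda_3}$. For $e^{\lambda_1}$, I would start from the fact that $\lambda_1$ is analytic on $\C\setminus(-\infty,q]$. By the first line of \eqref{jumplamb}, $\lambda_{1+}(x)=\lambda_{1-}(x)+2\pi i$ for $x\in(-\infty,p_+]$, so $e^{\lambda_{1+}}(x)=e^{\lambda_{1-}}(x)$ on the open segment $(-\infty,p_+)$; by the analytic continuation principle $e^{\lambda_1}$ therefore extends analytically across $(-\infty,p_+)$. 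Since $\C\setminus[p_+,q]$ is the union of $\C\setminus(-\infty,q]$ with $(-\infty,p_+)$, this yields an analytic function on $\C\setminus\Delta_1$. It remains to check single-valuedness: the only non-contractible loops in $\C\setminus\Delta_1$ are those encircling $\Delta_1$, and along such a loop the (continued) $\lambda_1$ changes by $\oint\zeta_1\,ds=2\pi i$ (equivalently this is read off from $\lambda_1=\log z+\ell_1+\mathcal O(1/z)$ in \eqref{lambda1}, or directly from the $+2\pi i$ jump across $(-\infty,p_+)$), so $e^{\lambda_1}$ returns to its original value.

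The argument for $e^{\lambda_2}$ is parallel. Again $\lambda_2$ is analytic on $\C\setminus(-\infty,q]$, and by the third line of \eqref{jumplamb}, $\lambda_{2+}(x)=\lambda_{2-}(x)-2\pi i$ for $x\in[p_-,p_+]$, so $e^{\lambda_2}$ has matching boundary values on $(p_-,p_+)$ and extends analytically across that segment. Since $\C\setminus\bigl((-\infty,p_-]\cup[p_+,q]\bigr)$ is the union of $\C\setminus(-\infty,q]$ with $(p_-,p_+)$, one obtains an analytic function on $\C\setminus(\Delta_1\cup\Delta_2)$. For single-valuedness, the relevant loops encircle $\Delta_1=[p_+,q]$; such a loop meets the real axis once in $(q,\infty)$, where $\lambda_2$ is analytic with no jump, and once in $(p_-,p_+)$, where $\lambda_2$ jumps by $\pm 2\pi i$, so $e^{\lambda_2}$ is again unchanged. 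The branch points $p_-,p_+,q$ themselves, near which $\lambda_1$ and $\lambda_2$ are bounded by the consequence of \eqref{zeta-origin1}--\eqref{zeta-origin2} noted above, all lie on the excluded sets $\Delta_1$ or $\Delta_2$, so they require no separate discussion.

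I expect no serious obstacle here; the only point requiring care is the bookkeeping of intervals, i.e.\ verifying that the jump segments $(-\infty,p_+]$ and $[p_-,p_+]$ are exactly what is needed to fill in $\C\setminus\Delta_1$ and $\C\setminus(\Delta_1\cup\Delta_2)$ respectively, and that this works uniformly in Case~1 ($p_-=0$, $p_+=p$) and Case~2 ($p_-=p$, $p_+=0$) under the convention $p_-=\min(0,p)$, $p_+=\max(0,p)$. All the genuine analytic content is already encapsulated in \eqref{jumplamb} and the residue identity $\oint\zeta_1\,ds=2\pi i$.
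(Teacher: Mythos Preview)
Your argument is correct and is exactly the approach the paper has in mind: the paper's entire proof is the one sentence ``A straightforward consequence of these relations is the following statement,'' and you have written out precisely those straightforward consequences of \eqref{jumplamb}. The only mild redundancy is your separate monodromy discussion for single-valuedness, since extending a single-valued function across a segment where its boundary values agree (via Morera) automatically gives a single-valued result; but this does no harm.
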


In the sequel, we will need to compare the real parts of the
$\lambda$-functions on $\R$ and in neighborhoods of $\Delta_{2}$ and
$\Delta_{1}$. This is the aim of the next lemma.
\begin{lemma} \label{lem:Relambdajs}
\label{signs}
{\rm (a)} The following inequalities hold true,
\begin{align*}
\Re \lambda_{1} <\Re \lambda_{2}\quad &\text{on $(0,p_+)$ in Case 1,}
\\
\Re \lambda_{3} <\Re \lambda_{2}\quad &\text{on $(p_-,0)$ in Case 2,}
\\
\Re \lambda_{1}<\Re \lambda_{2}\quad &\text{on } (q,+\infty).
\end{align*}
{\rm (b)} The open interval $(p_+,q)$ has a neighborhood
$U_{1}$ in the complex plane such that
\begin{equation*}
\Re \lambda_{2} (z) < \Re \lambda_{1} (z),\quad z\in
U_{1}\setminus (p_+,q).
\end{equation*}
{\rm (c)} The open interval $(-\infty,p_-)$ has a neighborhood
$U_{2}$ in the complex plane such that
\begin{equation*} 
\Re \lambda_{2} (z) < \Re \lambda_{3} (z), \quad z\in
U_{2}\setminus (-\infty,p_-).
\end{equation*}
The neigbhorhood $U_2$ is unbounded and contains a full neighborhood
of infinity.
\end{lemma}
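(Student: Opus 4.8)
The three assertions all concern the sign of a difference $\Re(\lambda_i-\lambda_j)$ on or near a real interval, and my plan is to control such differences through the identity $\frac{d}{dz}(\lambda_i-\lambda_j)=\zeta_i-\zeta_j$ together with four ingredients: the ordering of the \emph{real} branches of \eqref{RSequation5}, which is read off from the graph of $z=z(\zeta)$ in Figure~\ref{fig:mappingzeta}; the conjugacy relations \eqref{zetaconj}, \eqref{lambdaconj}; the local expansions \eqref{zeta-origin1}--\eqref{zeta-origin2} and the ordinary $(z-q)^{1/2}$-type behaviour at the simple branch points; and, for part~(c) near infinity, the asymptotics \eqref{lambda2}, \eqref{lambda3} and Lemma~\ref{lem:ell23}. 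Note that across all the real intervals that occur below the jumps of the $\lambda_k$ are purely imaginary (as follows from \eqref{jumplamb} and \eqref{zetaconj}), so each $\Re\lambda_k$ is a well-defined continuous function in a full real neighbourhood of the interval in question.

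\emph{Part (a).} On a real interval meeting neither $\Delta_1$ nor $\Delta_2$ all three $\zeta_k$ are real, so the relevant $\Re\lambda_k$ are real there. For the third inequality I integrate $(\lambda_1-\lambda_2)'=\zeta_1-\zeta_2$ from $q$, where $\lambda_1(q)=\lambda_2(q)=0$ by \eqref{def-lambda1}--\eqref{def-lambda2}: on $(q,\infty)$ the branches $\zeta_1,\zeta_2$ cannot cross (no branch point beyond $q$) and $\zeta_1\to0$, $\zeta_2\to\frac1{t(1-t)}>0$ at infinity, so $\zeta_1-\zeta_2<0$ throughout $(q,\infty)$, whence $\Re\lambda_1<\Re\lambda_2$ there. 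The first inequality (Case~1) is obtained the same way by integrating $\zeta_1-\zeta_2$ from $p_+=p$, where $\zeta_1$ and $\zeta_2$ coalesce since $p$ is a branch point joining $\RR_1$ and $\RR_2$; the relevant sign of $\zeta_1-\zeta_2$ on $(0,p)$ is the one visible in Figure~\ref{fig:mappingzeta}. The second inequality (Case~2) is analogous, integrating $\zeta_3-\zeta_2$ from the branch point $p_-=p$ of $\Delta_2$.

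\emph{Parts (b) and (c).} These are parallel, so I describe (b). First I show $\Re(\lambda_1-\lambda_2)\equiv0$ on $\Delta_1=[p_+,q]$: by \eqref{zetaconj}, $\zeta_{1+}=\overline{\zeta_{2+}}$ on $\Delta_1$, hence $(\lambda_{1+}-\lambda_{2+})'=\zeta_{1+}-\zeta_{2+}$ is purely imaginary, $\Re(\lambda_{1+}-\lambda_{2+})$ is constant on $(p_+,q)$, and it vanishes since $\lambda_1(q)=\lambda_2(q)=0$; Schwarz symmetry gives the same from below, and because the jumps of $\lambda_1$ and of $\lambda_2$ across $\Delta_1$ are purely imaginary, $\Re(\lambda_1-\lambda_2)$ extends continuously across $(p_+,q)$, vanishing there. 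To get the strict sign just off the band I compute (Cauchy--Riemann) the upward normal derivative of $\Re(\lambda_1-\lambda_2)$ along $(p_+,q)$, which equals $-\Im(\zeta_{1+}-\zeta_{2+})=2\,\Im\zeta_{2+}$; where this is nonzero — i.e.\ where the density $\rho=\frac1\pi|\Im\zeta_{2+}|$ of Theorem~\ref{theo:domain} does not vanish — the sign is constant and, checked against the endpoint behaviour, is the one making $\Re\lambda_2<\Re\lambda_1$ on both sides of the band, so $\Re(\lambda_1-\lambda_2)>0$ in a two-sided neighbourhood of every such point. Near the endpoints the first-order argument degenerates and I instead use the local model: near $q$, $\zeta_1-\zeta_2=c(z-q)^{1/2}(1+o(1))$ with $c$ real, so $\lambda_1-\lambda_2=\frac23c(z-q)^{3/2}(1+o(1))$, and the sign of $c$ is forced by the inequality $\Re\lambda_1<\Re\lambda_2$ on $(q,\infty)$ already proved, after which $\Re\big((z-q)^{3/2}\big)$ has the correct sign in precisely the two sectors adjacent to the cut, i.e.\ in the part of $U_1$ near $q$. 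Near $p_+$ one argues similarly: if $p_+>0$ (Case~1) with $\zeta_1-\zeta_2\sim c'(z-p_+)^{1/2}$ as at $q$; if $p_+=0$ (Case~2) the two coalescing branches instead blow up as $(-z)^{-1/2}$ by \eqref{zeta-origin2}, giving $\lambda_1-\lambda_2\sim C(-z)^{1/2}$, the coefficient sign being fixed by part~(a). Patching the interior estimate to these two endpoint estimates produces $U_1$. Part (c) is identical with $(\lambda_2,\lambda_3,\Delta_2)$ in place of $(\lambda_1,\lambda_2,\Delta_1)$ and with $p_-$ (using \eqref{def-lambda3}, i.e.\ $\lambda_3(p_-)=\lambda_{2-}(p_-)$) in place of $q$; the one extra point is that $U_2$ contains a full neighbourhood of infinity. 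Here I use \eqref{lambda2}, \eqref{lambda3} and Lemma~\ref{lem:ell23} to get $\lambda_2(z)-\lambda_3(z)=-\frac{4\sqrt{az}}{t}-\pi i+O(|z|^{-1/2})$ as $z\to\infty$, so $\Re(\lambda_2-\lambda_3)=-\frac{4\sqrt a}{t}\Re\sqrt z+O(|z|^{-1/2})$; since $\Re\sqrt z\ge0$ with equality only on $\R_-$, the leading term dominates the remainder once $|z|$ is large and $z\notin\R_-$, so $\Re\lambda_2<\Re\lambda_3$ on a set of the form $\{|z|>R_0\}\setminus(-\infty,p_-)$.

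\emph{Main obstacle.} Part~(a) and the interior estimates in (b)--(c) are routine; the delicate work is entirely at the finite endpoints of the cuts, and in particular at the hard edge $z=0$, which is $p_+$ in Case~2 and $p_-$ in Case~1. There the two coalescing branches blow up like an inverse square root rather than meeting at a finite value, so the local model is a $\frac12$-power instead of the usual $\frac32$-power, and one must keep careful track of the branch of $(-z)^{1/2}$ and verify that the sign of the leading coefficient is still consistent with the global inequalities of part~(a). Closely tied to this is the need to know the sign pattern of $\Im\zeta_{2+}$ on the band $(p_+,q)$ — equivalently, the exact vanishing set of the density $\rho$ of Theorem~\ref{theo:domain} — so that the normal-derivative argument delivers the correct sign uniformly up to the endpoints; pinning this down cleanly is best done by a monodromy/argument-principle analysis on $\RR$ rather than by local expansions alone.
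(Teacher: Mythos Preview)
Your approach is correct and follows essentially the same strategy as the paper: for (a), integrate $\zeta_i-\zeta_j$ from a branch point using the real ordering read off Figure~\ref{fig:mappingzeta} together with $\Re\lambda_i=\Re\lambda_j$ at that branch point (via \eqref{lambdaconj} or the definition \eqref{def-lambda3}); for (b)--(c), use the Cauchy--Riemann argument with the sign of $\Im(\zeta_i-\zeta_j)_+$ on the cut; and for the last claim in (c), use the asymptotics \eqref{labdasatinfty} (equivalently \eqref{lambda2}--\eqref{lambda3} with Lemma~\ref{lem:ell23}).

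The one substantive difference is that you spend most of your effort on the endpoints $p_\pm,q,0$, introducing local $3/2$- and $1/2$-power models, and you flag this as the ``main obstacle.'' The paper does none of this. The statement only asks for a neighbourhood $U_1$ of the \emph{open} interval $(p_+,q)$ (and similarly $U_2$ of $(-\infty,p_-)$), so the pointwise Cauchy--Riemann argument---$\Im(\zeta_2-\zeta_1)_+$ is strictly nonzero on the open band since the only points where it can vanish are the branch points---already produces such a neighbourhood. The paper determines the sign of $\Im(\zeta_2-\zeta_1)_+$ simply ``by inspection of the Riemann surface $\RR$,'' which is legitimate because the sign is constant on $(p_+,q)$ and can be read off at any single point. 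Your endpoint analysis is not wrong, and it does yield slightly more (control in sectors abutting the branch points), but it is not needed for the lemma as stated; the subsequent steepest-descent steps near $p_\pm$, $q$, and $0$ are handled by local parametrices anyway.
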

\begin{proof}
It is easy to check (see also the left-most picture in
Figure \ref{fig:mappingzeta}) that, in Case 1, $\zeta_{2} (s) <\zeta_{1} (s)
<\zeta_{3} (s)$
for $s\in (0,p_+)$. Hence, from the definitions of the functions
$\lambda_{1}$ and $\lambda_{2}$ and \eqref{lambdaconj}, we may conclude that
$\Re \lambda_{1} <\Re \lambda_{2}$ on $(0,p_+)$. In Case 2, we have
$\zeta_{1} (s) <\zeta_{3} (s)
<\zeta_{2} (s)$
for $s\in (p_-,0)$ (see right-most picture in Figure \ref{fig:mappingzeta}). Moreover, since
\[
(\lambda_{3}-\lambda_{2}) (z)=\int_{(p_-)_+}^{z} (\zeta_{3}-\zeta_{2})
(s)ds+ \lambda_{2-} (p_-)-\lambda_{2+} (p_-),
\]
we get, along with the third jump relation in \eqref{jumplamb}, that
$\Re \lambda_{3} <\Re \lambda_{2}$ on $(p_-,0)$. Finally, on
$(q,\infty)$, $\zeta_{1} (s) <\zeta_{2} (s)
<\zeta_{3} (s)$ so that the third inequality in assertion (a) is
indeed satisfied.

On the $+$ side of $\Delta_{1}$, $(\lambda_{2}-\lambda_{1})_{+}$ is purely
imaginary. Its derivative
$(\zeta_{2}-\zeta_{1})_{+} (z)$ is purely imaginary as
well. By inspection of the Riemann surface $\RR$, it can be shown that
this imaginary part is actually positive. Hence by the Cauchy-Riemann
equations the real part of $(\lambda_{2}-\lambda_{1})(z)$ decreases as
$z$ moves
into the upper half-plane, so that $\Re\lambda_{2} (z) <\Re\lambda_{1}
(z) $ for $z$
near $\Delta_{1}$ in the upper half-plane. Similarly, $\Re\lambda_{2} (z)
<\Re\lambda_{1} (z)$ for $z$
near $\Delta_{1}$ in the lower half-plane, which shows assertion
(b).

The proof of assertion (c) is similar. In order to see that $U_2$
contains a full neighborhood of infinity, it is sufficient to use \eqref{labdasatinfty}, where $\sqrt{a}/t >0$.
\end{proof}

A consequence of  Lemma \ref{lem:Relambdajs} is that we may (and do)
assume that contours $\Delta_2^\pm$,
defined in Section \ref{section3} (and depicted in Figure \ref{fig:first_transformation})
meet the real line at the branch point $p_-$, and lie in the neighborhood $U_2$ of $\Delta_2$
where $\Re (\lambda _2-\lambda _3)<0$.

Using functions $\lambda _j$, we can now define the second transformation
of the RH problem.
\begin{equation}\label{Udef}
U(z)= C_2   X(z)
\begin{pmatrix}
e^{-n \lambda_{1} (z)} & 0 & 0\\
0 & e^{-n (\lambda_{2} (z)- \frac{z}{t (1-t)})} & 0\\
0 & 0 & e^{-n (\lambda_{3} (z)- \frac{z}{t (1-t)})}
\end{pmatrix},
\end{equation}
where
\begin{equation} \label{C2def}
    C_2 = \begin{cases}
    \begin{pmatrix} 1 & 0 & 0 \\
    0 & 1 & -in \frac{t+4a(1-t)}{4\sqrt{a}} \\
    0 & 0 & 1 \end{pmatrix}
    L^{n}, & \text{if $n$ is even}, \\
    \begin{pmatrix} 1 & 0 & 0 \\
    0 & n \frac{t+4a(1-t)}{4\sqrt{a}} & i  \\
    0 &  i & 0 \end{pmatrix}
    L^{n}, & \text{if $n$ is odd},
    \end{cases}
\end{equation}
and $L$ is the constant diagonal matrix
\begin{equation} \label{Ldef}
L=\begin{pmatrix}
e^{\ell_{1}} & 0 & 0\\
0 & e^{\ell_{2}} & 0\\
0 & 0 & e^{\ell_{3}}
\end{pmatrix}.
\end{equation}
By Lemma \ref{lem:analyticity}, the matrix-valued function $U$ is analytic in $\C\setminus\R$.

Making use of the jump relations \eqref{Xjump1}--\eqref{Xjump4}
for $X$ and the definition \eqref{Udef} one easily gets
that the following jump relations for $U$:
\begin{equation*}
U_+(x) = U_-(x)
\begin{pmatrix}
    e^{n (\lambda_{1-} (x) -\lambda_{1+} (x)) }& x^{\alpha}e^{n (\lambda_{1-} (x) -\lambda_{2+} (x)) } & 0 \\
    0 & e^{n (\lambda_{2-} (x) -\lambda_{2+} (x)) } & 0 \\
    0 & 0 & e^{n (\lambda_{3-} (x) -\lambda_{3+} (x)) }
    \end{pmatrix}
\end{equation*}
for $x \in \mathbb R_+$,
\begin{equation*}
U_+(x) = U_-(x)
\begin{pmatrix}
    e^{n (\lambda_{1-} (x) -\lambda_{1+} (x)) }& 0 & 0 \\
    0 & e^{n (\lambda_{2-} (x) -\lambda_{2+} (x)) } & 0 \\
    0 & |x|^{\alpha}e^{n (\lambda_{3-} (x) -\lambda_{2+} (x)) } & e^{n (\lambda_{3-} (x) -\lambda_{3+} (x)) }
    \end{pmatrix}
\end{equation*}
for $x \in (p_-,0)$ (in Case 2),
\begin{equation*}
U_+(x) = U_-(x)
\begin{pmatrix}
    e^{n (\lambda_{1-} (x) -\lambda_{1+} (x)) }&    0 & 0 \\
    0 & 0 & -|x|^{-\alpha}e^{n (\lambda_{3-} (x) -\lambda_{2+} (x)) } \\
    0 & |x|^{\alpha}e^{n (\lambda_{3-} (x) -\lambda_{2+} (x)) } & 0
    \end{pmatrix}
\end{equation*}
for $x \in (-\infty, p_-)$, and
\begin{equation*}
U_+(z) = U_-(z)
 \begin{pmatrix}
    1 & 0 & 0 \\
    0 & 1 & e^{\pm \alpha \pi i} z^{-\alpha} e^{n(\lambda_2-\lambda_3)(z)} \\
    0 & 0 & 1
    \end{pmatrix}, \quad z \in \Delta_2^{\pm}.
\end{equation*}

Using the jump relations \eqref{jumplamb}, one checks easily that
the jump properties for $U$ simplify to the ones stated in
the following proposition with the just introduced notation.

\begin{proposition} \label{prop:RHforU}
The matrix-valued function $U(z)$ defined by \eqref{Udef}
is the unique solution of the following RH problem.
\begin{enumerate}
\item $U(z)$ is analytic in $\mathbb{C} \setminus (\R \cup \Delta_2^{\pm})$.
\item $U(z)$ possesses continuous boundary values at $(\R \cup \Delta_2^{\pm}) \setminus \{p_-, 0 \}$, and
\begin{equation}
\label{Ujump1}
U_+(x)=U_-(x)
    \begin{pmatrix}
1 & 0 & 0\\
0 & 0 & - |x|^{-\alpha} \\
0 & |x|^{\alpha}    & 0
    \end{pmatrix}, \quad x\in\Delta_{2}= (-\infty,p_-),
\end{equation}
\begin{equation}  \label{Ujump2}
U_+(x)=U_-(x) \times \left\{\begin{array}{ll}
    I +  x^\alpha e^{n (\lambda_{1}-\lambda_{2}) (x)} E_{1 2} , & x\in (0,p_+)\text{ in Case 1},\\[1mm]
   I + |x|^{\alpha}e^{n (\lambda_{3}-\lambda_{2})  (x)} E_{3 2}  ,  & x\in (p_-,0)\text{ in Case 2},
\end{array}\right.
\end{equation}
\begin{equation}  \label{Ujump3}
U_+(x)=U_-(x)
    \begin{pmatrix}
    e^{n (\lambda_{2}-\lambda_{1})_{+} (x)} & x^\alpha & 0 \\
        0 & e^{n (\lambda_{2}-\lambda_{1})_{-} (x)} & 0 \\
        0 & 0                    & 1
    \end{pmatrix}, \quad x\in\Delta_{1}= (p_+,q).
\end{equation}
\begin{equation}  \label{Ujump4}
U_+(x)=U_-(x)
\left( I + x^\alpha e^{n (\lambda_{1}-\lambda_{2}) (x)} E_{1 2}\right),
\quad x\in (q,\infty), \end{equation}
\begin{equation} \label{Ujump5}
U_+(z) = U_-(z)
 \left( I + e^{\pm \alpha \pi i} z^{-\alpha} e^{n(\lambda_2-\lambda_3)(z)} E_{2 3}\right),    \quad z \in \Delta_2^{\pm}.
\end{equation}
\item As $z\to \infty$ we have
\begin{multline} \label{Uasymptotics}
U(z) =
   \left(I + \mathcal{O}\left(\frac{1}{z}\right) \right)
    \begin{pmatrix}
    1 & 0 & 0\\
    0 & z^{1/4} & 0 \\
    0 & 0 &  z^{-1/4}
    \end{pmatrix}
     \begin{pmatrix}
    1 & 0 & 0 \\
    0 & \frac{1}{\sqrt{2}} & \frac{1}{\sqrt{2}}i \\
    0 & \frac{1}{\sqrt{2}}i & \frac{1}{\sqrt{2}}
    \end{pmatrix}
    \begin{pmatrix}
    1 & 0 & 0 \\
    0 & z^{\alpha/2} & 0 \\
    0 & 0 & z^{-\alpha/2}
    \end{pmatrix},
\end{multline}
\item $U(z)$ is bounded at $p_-$ if $p_-<0$, and has the same behavior as $X(z)$ at
the origin, see \eqref{Yedge} and \eqref{Xedge2}.
\end{enumerate}
\end{proposition}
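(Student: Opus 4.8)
The plan is to derive the whole RH problem for $U$ by pushing the explicit substitution \eqref{Udef} through the RH problem for $X$ established in Proposition~\ref{prop:RHforX}. Write
\[
   D(z) = \diag\!\left( e^{-n\lambda_1(z)},\ e^{-n\left(\lambda_2(z)-\frac{z}{t(1-t)}\right)},\ e^{-n\left(\lambda_3(z)-\frac{z}{t(1-t)}\right)} \right),
\]
so that $U = C_2\, X(z)\, D(z)$. Analyticity of $U$ in $\C\setminus(\R\cup\Delta_2^{\pm})$ is immediate, since $X$ is analytic there by Proposition~\ref{prop:RHforX} and the diagonal entries of $D$ are analytic and single-valued there by Lemma~\ref{lem:analyticity} (recall $z/(t(1-t))$ is entire). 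Uniqueness follows because \eqref{Udef} is reversible, $X = C_2^{-1} U D^{-1}$: any solution of the RH problem for $U$ produces, by running the computations below backwards, a solution of the RH problem for $X$, which is unique. It is also convenient to record that $\det U\equiv 1$: by \eqref{jumplamb} the function $\lambda_1+\lambda_2+\lambda_3-\frac{2z}{t(1-t)}$ has no jump across $\R$, is bounded near each branch point, and tends to $\ell_1+\ell_2+\ell_3$ at infinity by \eqref{lambda1}--\eqref{lambda3}, hence equals the constant $\ell_1+\ell_2+\ell_3$; together with $\det X\equiv1$ and $\det C_2 = \det L^n = e^{n(\ell_1+\ell_2+\ell_3)}$ (the nontrivial $2\times2$ blocks of $C_2$ have determinant $1$) this gives $\det U\equiv1$.

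For the jump conditions, on each component of $\R\cup\Delta_2^{\pm}$ one writes $U_+ = C_2 X_+ D_+ = C_2 X_- J_X D_+ = U_-\bigl(D_-^{-1} J_X D_+\bigr)$, with $J_X$ the corresponding jump matrix from \eqref{Xjump1}--\eqref{Xjump4}. Since $D$ is diagonal, $D_-^{-1} J_X D_+$ is evaluated entrywise, the $(i,j)$-entry of $J_X$ being multiplied by $e^{n(\lambda_i)_-}e^{-n(\lambda_j)_+}$; moreover the $\frac{z}{t(1-t)}$ shifts built into $D$ are continuous across $\R$ and absorb the factor $e^{-nx/(t(1-t))}$ occurring in \eqref{Xjump1}. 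Inserting the jump relations \eqref{jumplamb} then yields the four intermediate jump matrices for $U$ displayed just before the statement: the $\pm2\pi i$ jumps of $\lambda_1,\lambda_2$ contribute factors $e^{\mp2\pi i n}=1$ because $n\in\Z$, on $\Delta_1$ the identities $\lambda_{1\pm}=\lambda_{2\mp}$ turn the $(1,1)$ and $(2,2)$ entries into $e^{n(\lambda_2-\lambda_1)_{+}}$ and $e^{n(\lambda_2-\lambda_1)_{-}}$, and elsewhere all diagonal entries collapse to $1$. A final application of \eqref{jumplamb} to the exponents $(\lambda_i)_- - (\lambda_i)_+$ reduces these to \eqref{Ujump1}--\eqref{Ujump5}.

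The local behaviour is unchanged by the transformation. By \eqref{zeta-origin1}--\eqref{zeta-origin2} each $\zeta_j$ has at worst an integrable $z^{-1/2}$ singularity at $0$ and at $p_-$, so the $\lambda_j$, and hence the diagonal entries of $D$ together with their reciprocals, are bounded near these two points; since $C_2$ and $L$ are constant, $U$ inherits the behaviour of $X$ at the origin recorded in \eqref{Yedge} and \eqref{Xedge2}, and is bounded at $p_-$ when $p_-<0$.

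The main obstacle is the normalisation at infinity \eqref{Uasymptotics}. Here one substitutes the expansion \eqref{Xasymptotics} of $X$ and the expansions \eqref{lambda1}--\eqref{lambda3} of $\lambda_1,\lambda_2,\lambda_3$ into $U = C_2 X D$. The $\log z$ terms in the $\lambda_j$ produce powers of $z$ that cancel the diagonal factor $\diag\bigl(z^n,\, z^{-n/2}e^{-2n\sqrt{az}/t},\, z^{-n/2}e^{2n\sqrt{az}/t}\bigr)$ on the right of \eqref{Xasymptotics}, its exponential part being absorbed through the $\frac{z}{t(1-t)}$ shift and the $\mp\frac{2\sqrt{az}}{t}$ terms, while the constants $e^{\ell_j}$ are taken up by $L^n$. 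What remains are corrections of order $z^{-1/2}$ — from the $\frac{t+4a(1-t)}{4\sqrt{az}}$ terms in \eqref{lambda2}--\eqref{lambda3} and from the subleading terms of \eqref{Xasymptotics} — and the role of the $n$-dependent off-diagonal entries of $C_2$ in \eqref{C2def} is precisely to remove them, so that the residual prefactor is $I+\mathcal{O}(1/z)$ and the leading matrix is the product in \eqref{Uasymptotics}. As in the proof of Proposition~\ref{prop:RHforX}, the parity of $n$ enters through the factor $z^{(-1)^n/4}$ of \eqref{Xasymptotics}: for odd $n$ the lower $2\times2$ block of $C_2$ must additionally interchange the second and third fractional-power behaviours, which is why that block is taken antidiagonal. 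Carrying out this bookkeeping to order $z^{-1/2}$ in both parities is the only genuinely computational step, and completes the proof.
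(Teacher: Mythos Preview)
Your approach is correct and essentially identical to the paper's: conjugate the jump matrices of $X$ by $D$, use \eqref{jumplamb} to simplify, invoke boundedness of the $\lambda_j$ near $0$ and $p_-$ for item~4, and for item~3 feed \eqref{Xasymptotics} and \eqref{lambda1}--\eqref{lambda3} into $U=C_2 X D$ and commute the residual $z^{-1/2}$ corrections to the left where $C_2$ kills them. The one ingredient you do not name explicitly, and which the paper singles out, is Lemma~\ref{lem:ell23}: the relation $\ell_3=\ell_2+\pi i$ is what makes $L^{-n}$ (for $n$ even) or $L^{-n}\diag(1,1,-1)$ (for $n$ odd) a scalar on the lower $2\times2$ block, so that it commutes past the nondiagonal factor $\frac{1}{\sqrt2}\left(\begin{smallmatrix}1&i\\ i&1\end{smallmatrix}\right)$ in \eqref{Xasymptotics}; without this your phrase ``the constants $e^{\ell_j}$ are taken up by $L^n$'' would not go through, so it is worth flagging in the write-up.
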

\begin{proof}
Jumps \eqref{Ujump1}--\eqref{Ujump5} are result of straightforward calculations and Lemma \ref{lem:analyticity}.

For the proof of the asymptotic condition in item 3 we note that
property \eqref{Xasymptotics} of $X$ and the asymptotic behaviors \eqref{lambda1}-\eqref{lambda3}
of the $\lambda_j$-functions yield
\begin{multline} \label{Xtimeslambda}
    X(z)
\begin{pmatrix}
e^{-n \lambda_{1} (z)} & 0 & 0\\
0 & e^{-n (\lambda_{2} (z)- \frac{z}{t (1-t)})} & 0\\
0 & 0 & e^{-n (\lambda_{3} (z)- \frac{z}{t (1-t)})}
\end{pmatrix} \\
=
   \left(I + \mathcal{O}\left(\frac{1}{z}\right) \right)
    \begin{pmatrix}
    1 & 0 & 0\\
    0 & z^{(-1)^n/4} & 0 \\
    0 & 0 &  z^{-(-1)^n/4}
    \end{pmatrix}
     \begin{pmatrix}
    1 & 0 & 0 \\
    0 & \frac{1}{\sqrt{2}} & \frac{1}{\sqrt{2}} i \\
    0 & \frac{1}{\sqrt{2}} i & \frac{1}{\sqrt{2}}
    \end{pmatrix}
    \begin{pmatrix}
    1 & 0 & 0 \\
    0 & z^{\alpha/2} & 0 \\
    0 & 0 & z^{-\alpha/2}
    \end{pmatrix} \\ \times
    L^{-n}
\begin{pmatrix}
            1+ \mathcal{O}(\frac{1}{z})  & 0 & 0 \\
            0 & 1 - \frac{c_1}{z^{1/2}} + \frac{c_2}{z}  + \mathcal{O}(\frac{1}{z^{3/2}}) & 0 \\
            0 & 0 &  1 + \frac{c_1}{z^{1/2}} + \frac{c_2}{z} + \mathcal{O}(\frac{1}{z^{3/2}})
            \end{pmatrix}
\end{multline}
as $z \to \infty$,
where
\[ c_1 = n\frac{t+4a(1-t)}{4\sqrt{a}}, \quad
     c_2 = \frac{c_1^2 - nk}{2}. \]

If $n$ is even, then by  Lemma \ref{lem:ell23}
we have that $L^{-n}$ commutes with all
matrices before it. The last matrix in the right-hand side of \eqref{Xtimeslambda}
can be moved to the left as in the proof of Proposition \ref{prop:RHforX}.
The result is that \eqref{Xtimeslambda} is equal to
\begin{equation*}
L^{-n} \begin{pmatrix} 1 & 0 & 0 \\
    0 & 1 & i c_1 \\
    0 & 0 & 1 \end{pmatrix}
   \left(I + \mathcal{O}\left(\frac{1}{z}\right) \right)
    \begin{pmatrix}
    1 & 0 & 0\\
    0 & z^{1/4} & 0 \\
    0 & 0 &  z^{-1/4}
    \end{pmatrix}
     \begin{pmatrix}
    1 & 0 & 0 \\
    0 & \frac{1}{\sqrt{2}} & \frac{1}{\sqrt{2}} i \\
    0 & \frac{1}{\sqrt{2}} i & \frac{1}{\sqrt{2}}
    \end{pmatrix}
    \begin{pmatrix}
    1 & 0 & 0 \\
    0 & z^{\alpha/2} & 0 \\
    0 & 0 & z^{-\alpha/2}
    \end{pmatrix},
\end{equation*}
as $z \to \infty$. Then \eqref{Uasymptotics} follows by
the definition \eqref{Udef}--\eqref{Ldef} of $U$.

If $n$ is odd, then by Lemma \ref{lem:ell23}, we have that
\[ L^{-n} \diag(1,1,-1) = \diag\left(e^{-n \ell_1}, e^{-n \ell_2}, e^{-n \ell_2}\right) \]
commutes with all factors before it in \eqref{Xtimeslambda}.
The result now is that \eqref{Xtimeslambda} is equal to
\begin{equation*}
L^{-n}
    \begin{pmatrix} 1 & 0 & 0 \\ 0 & 0 & -i \\ 0 & - i & c_1 \end{pmatrix}
   \left(I + \mathcal{O}\left(\frac{1}{z}\right) \right)
    \begin{pmatrix}
    1 & 0 & 0\\
    0 & z^{1/4} & 0 \\
    0 & 0 &  z^{-1/4}
    \end{pmatrix}
     \begin{pmatrix}
    1 & 0 & 0 \\
    0 & \frac{1}{\sqrt{2}} & \frac{1}{\sqrt{2}} i \\
    0 & \frac{1}{\sqrt{2}} i & \frac{1}{\sqrt{2}}
    \end{pmatrix}
    \begin{pmatrix}
    1 & 0 & 0 \\
    0 & z^{\alpha/2} & 0 \\
    0 & 0 & z^{-\alpha/2}
    \end{pmatrix},
\end{equation*}
as $z \to \infty$, and again \eqref{Uasymptotics} follows by
the definition \eqref{Udef}--\eqref{Ldef} of $U$.

The behavior of $U$ at the origin given in item 4 follows from
the corresponding behavior of $X$, and the fact that the
$\lambda_j$ functions all remain bounded near the origin.
\end{proof}

It follows from Lemma \ref{lem:Relambdajs} that
the jump matrices in \eqref{Ujump2}, \eqref{Ujump4}, and \eqref{Ujump5}
tend to the  identity matrix $I$ as $n \to \infty$ at an exponential rate.
Moreover, by
\eqref{jumplamb}, $(\lambda_{2}-\lambda_{1})_{+} =-
(\lambda_{2}-\lambda_{1})_{-}$ is purely imaginary on
$\Delta_{1}$, so that the first two diagonal elements of the jump matrices in
\eqref{Ujump3} are oscillatory. In the third transformation we open
a lens around $\Delta_1$ and we turn the oscillatory entries into
exponentially small entries.

\section{Third transformation of the RH problem}\label{section5}

Here, the goal is to transform the oscillatory
diagonal terms in the jump matrices on $\Delta_{1}$
into exponentially small off-diagonal terms. This we do by
opening a lens around
$\Delta_{1}$, see Figure
\ref{deform3}. We assume that the lens is contained
in $U_{1}$, see Lemma \ref{signs}.

\begin{figure}[htb]
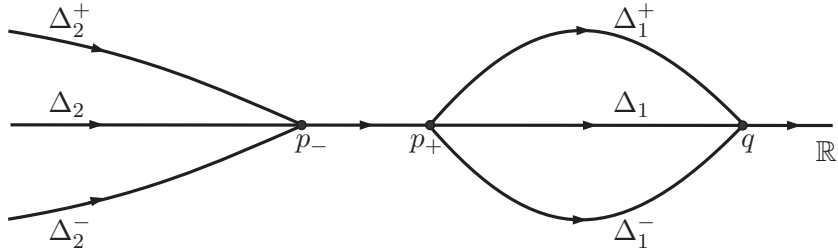

\centering \begin{overpic}[scale=1.2]%
{secondcontour}%
      \put(95,14){$\R $}
       \put(8,29){$\Delta_2^+ $}
       \put(8,5){$\Delta_2^- $}
       \put(8,19.5){$\Delta_2 $}
       \put(72,29){$\Delta_1^+ $}
       \put(72,5){$\Delta_1^- $}
       \put(72,19.5){$\Delta_1 $}
       \put(36,15.5){$p_-$}
       \put(49,15.5){$p_+$}
       \put(86.5,15.5){$q$}
\end{overpic}
\caption{Deformation of contours around $\Delta_{1}$.}
\label{deform3}
\end{figure}
We use the following factorizations of the $2\times 2$ non-trivial
block of the jump matrix in \eqref{Ujump3}, \begin{multline*}
\begin{pmatrix}
e^{n (\lambda_{2}-\lambda_{1})_{+} (x)} & x^\alpha  \\
        0 & e^{n (\lambda_{2}-\lambda_{1})_{-} (x)}
\end{pmatrix}\\
=
\begin{pmatrix}
1 & 0\\
x^{-\alpha}e^{n (\lambda_{2}-\lambda_{1})_{-} (x)} & 1\\
\end{pmatrix}
\begin{pmatrix}
0 & x^{\alpha}\\
-x^{-\alpha} & 0
\end{pmatrix}
\begin{pmatrix}
1 & 0\\
x^{-\alpha}e^{n (\lambda_{2}-\lambda_{1})_{+} (x)} & 1
\end{pmatrix}.
\end{multline*}
We set
\begin{equation} \label{T1def}
T(z)=U(z)
\, \left( I   \mp z^{-\alpha}e^{n (\lambda_{2}-\lambda_{1}) (z)} E_{21} \right),
\end{equation}
for $z$ in the domain bounded by $\Delta_{1}^{\pm}$ and $\Delta_1$,
and we let
\begin{equation} \label{T2def}
    T(z) =U(z)
    \end{equation}
for $z$  outside of the lens.

Straightforward calculations show that $T(z)$ is a solution of
the following Riemann--Hilbert problem which is stated in the
next proposition.

\begin{proposition} \label{prop:RHforT}
The matrix-valued function $T(z)$ is the unique solution of the
following RH problem.
\begin{enumerate}
\item $T$ is analytic in $\mathbb C \setminus ( \mathbb R \cup \Delta_1^{\pm} \cup \Delta_2^{\pm})$.
\item $T$ has a jump $T_+(z)=T_-(z)\, j_{T} (z)$ on each of the oriented contours shown in Figure
\ref{deform3}. They are given by
{\allowdisplaybreaks
\begin{align*}
j_{T}(x)&=
    \begin{pmatrix}
    1 & 0                  & 0 \\
0 & 0 & -|x|^{-\alpha}\\
0 & |x|^{\alpha} & 0
    \end{pmatrix}, \quad x\in\Delta_{2}, \\
j_{T}(z)&=
   I + e^{\pm \alpha \pi i} z^{-\alpha}e^{n (\lambda_{2}-\lambda_{3})(z)} E_{23}, \quad z\in\Delta_{2}^{\pm}, \\
j_{T} (x)& =\left\{\begin{array}{ll}
 I+  x^\alpha e^{n (\lambda_{1}-\lambda_{2}) (x)} E_{12}    , & x\in (0,p_+)\text{ in Case 1},\\[2mm]
   I+ |x|^{\alpha}e^{n (\lambda_{3}-\lambda_{2})  (x)} E_{32}  , & x\in (p_-,0)\text{ in Case 2},
\end{array}\right.\\
j_{T} (x) &=
    \begin{pmatrix}
0 &  x^{\alpha}          & 0    \\
-x^{-\alpha} & 0 & 0 \\
0 &  0 & 1\\
    \end{pmatrix}, \quad  x\in\Delta_1,
\\
j_{T}(z)&=
    I + z^{-\alpha} e^{n (\lambda_{2}-\lambda_{1})(z)} E_{2 1}, \quad z\in\Delta_{1}^{\pm}, \\
j_{T}(x)&=
    I + x^\alpha e^{n (\lambda_{1}-\lambda_{2}) (x)} E_{1 2},
    \quad x\in (q,+\infty).
\end{align*}}
\item As $z \to \infty$, we have
\begin{equation} \label{Tinfty}
    T(z) =
   \left(I + \mathcal{O}\left(\frac{1}{z}\right) \right)
    \begin{pmatrix}
    1 & 0 & 0\\
    0 & z^{1/4} & 0 \\
    0 & 0 &  z^{-1/4}
    \end{pmatrix}
     \begin{pmatrix}
    1 & 0 & 0 \\
    0 & \frac{1}{\sqrt{2}} & \frac{1}{\sqrt{2}}i \\
    0 & \frac{1}{\sqrt{2}}i & \frac{1}{\sqrt{2}}
    \end{pmatrix}
    \begin{pmatrix}
    1 & 0 & 0 \\
    0 & z^{\alpha/2} & 0 \\
    0 & 0 & z^{-\alpha/2}
    \end{pmatrix}.
\end{equation}
\item For $-1<\alpha<0$, $T(z)$ behaves near the origin like:
\begin{equation*} 
T(z)=\mathcal{O}\begin{pmatrix}
            1 & |z|^{\alpha} & 1 \\
            1 & |z|^{\alpha} & 1 \\
            1 & |z|^{\alpha} & 1
            \end{pmatrix}, \quad \text{as }z\to 0.
\end{equation*}
For $\alpha=0$, $T(z)$ behaves near the origin like:
\begin{equation*} 
T(z)=\mathcal{O}\begin{pmatrix}
             1 & \log |z| & 1 \\
             1 & \log |z| & 1 \\
             1 & \log |z| & 1
            \end{pmatrix}, \quad \text{as }z\to 0\text{ outside the
lens that ends in 0},
\end{equation*}
and
\begin{equation*} 
T(z)=\begin{cases}
 \mathcal{O}\begin{pmatrix}
            1 & \log |z| & \log |z| \\
            1 & \log |z| & \log |z| \\
            1 & \log |z| & \log |z|
            \end{pmatrix}, & \text{as }z\to 0 \text{ inside the lens
around $\Delta_{2}$ in Case 1},\\[5mm]
\mathcal{O}\begin{pmatrix}
            \log |z| & \log |z| & 1 \\
            \log |z| & \log |z| & 1 \\
            \log |z| & \log |z| & 1
            \end{pmatrix}, & \text{as }z\to 0 \text{ inside the lens
around $\Delta_{1}$ in Case 2}.
\end{cases}
\end{equation*}
For $\alpha > 0$, $T(z)$ behaves near the origin like:
\begin{equation*} 
T(z)=\mathcal{O}\begin{pmatrix}
             1 & 1 & 1 \\
             1 & 1 & 1 \\
             1 & 1 & 1
            \end{pmatrix}, \quad \text{as }z\to 0\text{ outside the
lens that ends in 0},
\end{equation*}
and\begin{equation*} 
T(z)=\left\{\begin{array}{ll}
\mathcal{O}\begin{pmatrix}
            1 & 1 & |z|^{-\alpha} \\
            1 & 1 & |z|^{-\alpha}\\
            1 & 1 & |z|^{-\alpha}
            \end{pmatrix}, & \text{as }z\to 0 \text{ inside the lens
around $\Delta_{2}$ in Case 1},\\
\mathcal{O}\begin{pmatrix}
             |z|^{-\alpha} & 1 & 1 \\
             |z|^{-\alpha} & 1 & 1 \\
             |z|^{-\alpha} & 1 & 1
            \end{pmatrix}, & \text{as }z\to 0 \text{ inside the lens
around $\Delta_{1}$ in Case 2}.
\end{array}
\right.
\end{equation*}
\item
$T$ is bounded at $p$ and $q$.
\end{enumerate}
\end{proposition}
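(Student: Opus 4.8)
The plan is to verify the five items of the RH problem for $T$ in turn, in each case reading the assertion off the corresponding property of $U$ in Proposition~\ref{prop:RHforU} and accounting for the explicit right factor introduced in \eqref{T1def}--\eqref{T2def}. Since $T$ is obtained from $U$, and hence ultimately from $Y$, by right multiplication by matrix-valued functions of determinant one that are invertible on their domains, uniqueness of $T$ is inherited from that of $Y$. For analyticity, write $\Phi(z) = I \mp z^{-\alpha} e^{n(\lambda_2-\lambda_1)(z)} E_{21}$ for the factor used inside the lens around $\Delta_1$. By Lemma~\ref{lem:analyticity} the function $e^{\lambda_2-\lambda_1}$ is analytic and single-valued off $\Delta_1 \cup \Delta_2$, while $z^{-\alpha}$ is analytic off $\R_-$; since the lens around $\Delta_1$ is taken inside $U_1$ (Lemma~\ref{lem:Relambdajs}) and may be chosen thin, with boundary arcs emanating from $p_+$ away from the negative axis, it meets neither $\Delta_2$ nor $\R_-$ (in Case~2 it touches $\R_-$ only at the single point $0$). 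Hence $\Phi$ is analytic in the two open regions making up the lens, and $T$ is analytic in $\C \setminus (\R \cup \Delta_1^\pm \cup \Delta_2^\pm)$.

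For the jumps, observe first that $\Delta_2$, $\Delta_2^\pm$, $(q,+\infty)$, and $(0,p_+)$ in Case~1 or $(p_-,0)$ in Case~2 all lie outside the lens around $\Delta_1$, so there $T=U$ on both sides and the jump matrices coincide with those of Proposition~\ref{prop:RHforU}. On $\Delta_1^\pm$ the only jump is the one created by crossing the lens boundary: $T$ equals $U$ on one side and $U\Phi$ on the other, hence $j_T = \Phi^{\mp1} = I + z^{-\alpha} e^{n(\lambda_2-\lambda_1)(z)} E_{21}$ by Lemma~\ref{lem:M}(b), with the orientation of Figure~\ref{deform3}. On $\Delta_1$ itself one combines the displayed factorization of the nontrivial $2\times2$ block of \eqref{Ujump3} with $T_\pm = U_\pm\Phi_\pm$: writing $U_+ = U_- j_U$ gives $j_T = \Phi_-^{-1} j_U \Phi_+$, and the factorization, together with $E_{21}^2 = O$ and Lemma~\ref{lem:M}, collapses this to the claimed anti-diagonal block matrix on $(p_+,q)$.

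For the behavior at infinity and at the real branch points $p_+$ and $q$: near these points the factor $\Phi$ and the diagonal conjugation in \eqref{Udef} are bounded with bounded inverse, since $\lambda_1,\lambda_2,\lambda_3$ are bounded near every branch point and $z^{-\alpha}$ is analytic at the positive points $p_+,q$, while $Y$, hence $X$ and $U$, is bounded at the interior points $p_+,q$ of $(0,\infty)$ because the jump data there are analytic and no endpoint condition is imposed. Thus $T=U$ near $\infty$ yields \eqref{Tinfty} directly from \eqref{Uasymptotics}, and $T$ is bounded at $p_+$ and $q$; when $p=p_-<0$ (Case~2) the $\Delta_1$-lens misses $p_-$, so $T=U$ is bounded there too. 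Near the origin, outside every lens having $0$ as an endpoint one has $T=U$, so the estimates are those of $U$ via \eqref{Yedge} and \eqref{Xedge2}; in Case~1 the $\Delta_1$-lens does not reach $0$, so only the dichotomy ``outside versus inside the $\Delta_2$-lens'' remains. The new point is Case~2, where $0=p_+$ is an endpoint of the $\Delta_1$-lens and $T=U\Phi$: since right multiplication by $E_{21}$ replaces the first column of $U$ by its second, the first column of $T$ is that of $U$ corrected by an $\mathcal O(z^{-\alpha})$ multiple of the second column of $U$ (the scalar $e^{n(\lambda_2-\lambda_1)(z)}$ staying bounded at $0$ because $\lambda_1,\lambda_2$ are bounded near the branch point). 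Combining this with the column orders of $U$ --- bounded first and third columns, $\mathcal O(h(z))$ second column with $h$ as in \eqref{Yedge} --- and treating $-1<\alpha<0$, $\alpha=0$ and $\alpha>0$ separately, where the correction $z^{-\alpha}h(z)$ is of order $1$, $\log|z|$, and $|z|^{-\alpha}$ respectively, reproduces the three displayed matrices for the ``inside the $\Delta_1$-lens in Case~2'' situation.

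The step that takes genuine care is this last one: controlling the interaction of the lens-opening factor $z^{-\alpha}$, which is itself unbounded when $\alpha>0$, with the already-singular middle column of $U$ at the hard edge, and checking that each of the three ranges of $\alpha$ produces exactly the claimed $\mathcal O$-matrix with the correct columns degraded. The remaining work --- keeping the orientations of $\Delta_1$ and $\Delta_1^\pm$ and the $\mp$ signs in \eqref{T1def} consistent throughout the jump computation --- is entirely routine.
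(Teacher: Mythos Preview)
Your proposal is correct and follows the same approach as the paper, which simply records that ``all properties follow by straightforward calculations'' and then addresses uniqueness. Your verification of items 1--5 is considerably more explicit than the paper's, and the column-by-column origin analysis in Case~2 is exactly right.

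The one place where the paper is more careful than you is uniqueness. You write that uniqueness of $T$ is inherited from that of $Y$ because the transformation is by invertible right multiplication. The paper flags precisely this step as nontrivial: ``Because of the prescribed behavior at the origin, it is not immediate that the RH problems for $U$ and $T$ are equivalent. Reasoning as in \cite[Lemma 4.1]{KMVAV} we can still show that they are.'' The issue is that if $\widetilde T$ is any solution of the $T$-problem and one defines $\widetilde U = \widetilde T \Phi^{-1}$ inside the lens, one must check that $\widetilde U$ obeys the origin estimates of Proposition~\ref{prop:RHforU}; the singular factor $z^{-\alpha}$ in $\Phi^{-1}$ could in principle worsen the first column, and one needs the structure of the jump at $0$ (as in the cited lemma) to rule this out. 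Your invertibility remark is the right intuition but skips this endpoint check.
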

\begin{proof}
All properties follow by straightforward calculations.

Because of the prescribed behavior at the origin, it is not immediate
that the RH problems for $U$ and $T$ are equivalent. Reasoning as in
\cite[Lemma 4.1]{KMVAV} we can still show that they are.  Thus in
particular the solution of the RH problem for $T$ is unique.
\end{proof}

\section{Model RH problem for the global parametrix}\label{section6}

In view of Lemma \ref{signs} the jump matrices in the RH problem for
$T$ all tend to the identity matrix exponentially
fast as $n\to\infty$, except for the jump matrices on  $\Delta_{1}$ and $\Delta_{2}$.
Thus we expect that the main
contribution to the asymptotic behavior of $T$ is described by a solution
$N_{\alpha}$ of the following model RH problem.
\begin{enumerate}
\item $N_{\alpha}$ is analytic in $\C\setminus(\Delta_{1}\cup\Delta_{2})$.
\item $N_{\alpha}$ has continuous boundary values on $\Delta_1\cup \Delta_2$, satisfying the following jump relations:
\begin{align}
\label{eq:Njump1}
N_{\alpha +}(x) & =N_{\alpha -}(x)
    \begin{pmatrix}
    0            & x^\alpha & 0 \\
        -x^{-\alpha} & 0        & 0 \\
        0            & 0        & 1
    \end{pmatrix}, \quad x\in \Delta_1, \\
\label{eq:Njump2}
N_{\alpha +}(x) & =N_{\alpha -}(x)
\begin{pmatrix}
    1            & 0 & 0 \\
        0 & 0        & -|x|^{-\alpha} \\
        0            & |x|^{\alpha}        & 0
    \end{pmatrix}, \quad x\in \Delta_2.
    \end{align}
\item As $z \to \infty$, $z\in \C\setminus \Delta_2$,
\begin{equation} \label{eq:Nasymptotics}
    N_{\alpha}(z) =
   \left(I + \mathcal{O}\left(\frac{1}{z}\right) \right)
    \begin{pmatrix}
    1 & 0 & 0\\
    0 & z^{1/4} & 0 \\
    0 & 0 &  z^{-1/4}
    \end{pmatrix}
    \begin{pmatrix}
    1 & 0 & 0 \\
    0 & \frac{1}{\sqrt{2}} & \frac{1}{\sqrt{2}} i \\
    0 & \frac{1}{\sqrt{2}}i & \frac{1}{\sqrt{2}}
    \end{pmatrix}
    \begin{pmatrix}
    1 & 0 & 0 \\
    0 & z^{\alpha/2} & 0 \\
    0 & 0 & z^{-\alpha/2}
    \end{pmatrix}.
\end{equation}
\end{enumerate}

The asymptotic condition at infinity looks a bit awkward.
However it is consistent with the jump on $\Delta_2$ since
it may be checked that
\[  B(z) = \begin{pmatrix}
     z^{1/4} & 0 \\
     0 &  z^{-1/4}
    \end{pmatrix}
    \frac{1}{\sqrt{2}} \begin{pmatrix}
     1 & i \\
     i & 1
    \end{pmatrix}
    \begin{pmatrix}
    z^{\alpha/2} & 0 \\
    0 & z^{-\alpha/2}
    \end{pmatrix}, \]
satisfies
\[ B_+(x) = B_-(x) \begin{pmatrix} 0 & -|x|^{-\alpha} \\
    |x|^{\alpha} & 0 \end{pmatrix},
        \quad x \in (-\infty, 0). \]

We solve the RH problem for $N_{\alpha}$ in two steps. First
we solve it for the special value $\alpha = 0$ and then we use this
to solve it for general values of $\alpha$. In both steps we
will use the  mapping function \eqref{RSequation5}
\[ z = \frac{1-k\zeta}{\zeta(1-c\zeta)^2} \]
with
\begin{equation} \label{definitionKandC}
    c = t(1-t) \qquad \text{and} \qquad k = (1-t)(t-a(1-t)),
    \end{equation}
 which gives
a bijection between the Riemann surface $\mathcal R$ and the extended $\zeta$-plane.
The mapping properties are summarized in Figure
\ref{fig:Riemannmapping} for the two cases (Case 1 in the upper part
and Case 2 in the lower part of the figure).

\begin{figure}[h]
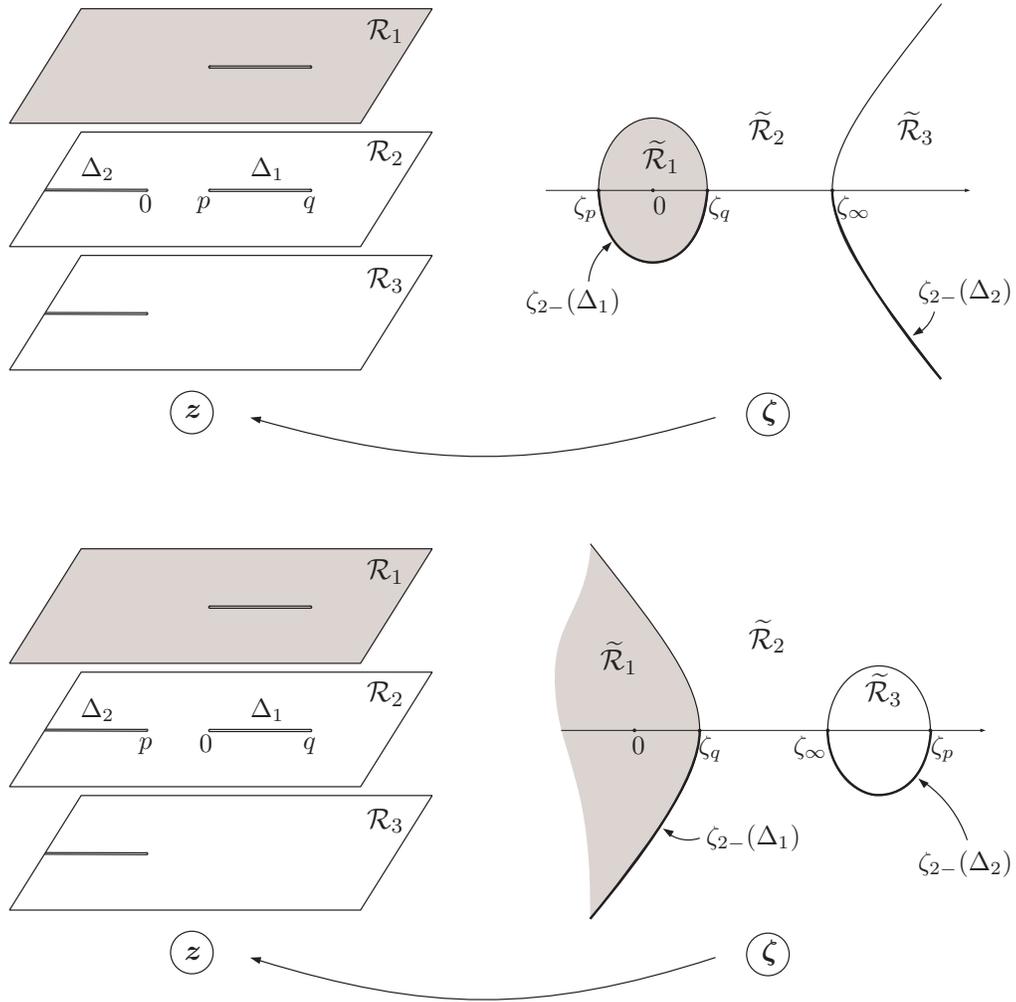

\centering \begin{overpic}[scale=0.9]%
{Riemannmapping}%
\put(72.2,7.6){ $\boldsymbol \zeta $}
\put(72.2,58.6){ $\boldsymbol \zeta $}
\put(17.8,7.6){ $\boldsymbol z $}
\put(17.8,58.7){ $\boldsymbol z $}
\put(35,94.5){ $ \mathcal R_1 $}
\put(35,43.5){ $ \mathcal R_1 $}
\put(35,83){ $ \mathcal R_2 $}
\put(35,32){ $ \mathcal R_2 $}
\put(35,71){ $ \mathcal R_3 $}
\put(35,20){ $ \mathcal R_3 $}
\put(50,69){ \small $ \zeta_{2-}(\Delta_1) $}
\put(87,70){ \small $ \zeta_{2-}(\Delta_2) $}
\put(87,16){ \small $ \zeta_{2-}(\Delta_2) $}
\put(67,18.5){ \small $ \zeta_{2-}(\Delta_1)$}
\put(61,82.3){ $ \widetilde{\mathcal R}_1 $}
\put(71,85){ $ \widetilde{\mathcal R}_2 $}
\put(85,85){ $ \widetilde{\mathcal R}_3 $}
\put(62,78){ \small $ 0 $}
\put(54.5,78){ \small $\zeta_p$}
\put(67.2,78){ \small $\zeta_q$}
\put(79.3,78){ \small $\zeta_{\infty}$}
\put(57,35){ $ \widetilde{\mathcal R}_1 $}
\put(71,37){ $ \widetilde{\mathcal R}_2 $}
\put(82,32){ $ \widetilde{\mathcal R}_3 $}
\put(60,27){ \small $ 0 $}
\put(88.2,27){ \small $\zeta_p$}
\put(66.3,27){ \small $\zeta_q$}
\put(75.2,27){ \small $\zeta_{\infty}$}
\put(13.5,78.2){ \small $ 0 $}
\put(13.5,27.5){ \small $ p $}
\put(19,78.5){ \small $ p $}
\put(19.2,27.2){ \small $ 0 $}
\put(29,78.5){ \small $ q $}
\put(29,27.5){ \small $ q $}
\put(24,81.5){ \small $\Delta_1$}
\put(24,30.5){ \small $\Delta_1$}
\put(8,81.5){ \small $\Delta_2$}
\put(8,30.5){ \small $\Delta_2$}
\end{overpic}
\caption{Bijection \eqref{RSequation5} between the Riemann surface $\mathcal R$ and
the extended $\zeta$-plane in the Case 1 (top) and 2 (bottom).}
\label{fig:Riemannmapping}
\end{figure}

The figure shows the domains
\[ \widetilde{\mathcal R}_j = \zeta_j(\mathcal R_j), \qquad j=1,2,3, \]
where $\mathcal R_j$ is the $j$th sheet of the Riemann surface, and
also the location of the points
\[ \zeta_p = \zeta_2(p), \quad \zeta_q = \zeta_2(q), \quad
    \zeta_{\infty} = \zeta_2(\infty) = \frac{1}{t(1-t)} = \frac{1}{c} \]
for the two cases. We observe that
$\zeta_{2+}(\Delta_1)$ and $\zeta_{2+}(\Delta_2)$ are in the upper half plane,
while $\zeta_{2-}(\Delta_1)$ and $\zeta_{2-}(\Delta_2)$ are in the lower half plane.

To solve the model RH problem for  $\alpha = 0$, we use the polynomial $D(\zeta)$
\begin{equation}
\label{defD}
D(\zeta) = (\zeta-\zeta_p)(\zeta-\zeta_q)(\zeta-\zeta_{\infty}).
\end{equation}
The square root $D (\zeta)^{1/2}$, which branches at these three points,
is defined with a cut on
$\zeta_{2-} (\Delta_1)\cup\zeta_{2-}(\Delta_2)$, which, as noted before,
are the parts of the boundary of $\widetilde{\mathcal R}_2$ that are
in the lower half of the $\zeta$-plane.
We assume that the square root is positive for
large positive $\zeta$.

\begin{proposition}\label{solmrhp1}
A solution of the model RH problem for $N_0$ is given by
\begin{equation}\label{solN0}
    N_0(z)=
    \begin{pmatrix}
    F_1(\zeta_1(z)) & F_1(\zeta_2(z))  & F_1(\zeta_3(z)) \\
        F_2(\zeta_1(z)) & F_2(\zeta_2(z))  & F_2(\zeta_3(z)) \\
        F_3(\zeta_1(z)) & F_3(\zeta_2(z))  & F_3(\zeta_3(z))
    \end{pmatrix}
\end{equation}
where
\begin{equation}
    F_1(\zeta) = K_1 \frac{(\zeta - \zeta_{\infty})^{2}}{D(\zeta)^{1/2}}, \quad
        F_2(\zeta) = K_{2} \frac{\zeta(\zeta-\zeta^*)}{D(\zeta)^{1/2}}, \quad
        F_3(\zeta) =  K_{3}  \frac{\zeta(\zeta - \zeta_{\infty})}{D (\zeta)^{1/2}},
\label{eq:F1F2F3}
\end{equation}
with $D(\zeta)$ given by \eqref{defD}. Furthermore, $\zeta^* \neq \zeta_{\infty}$,
and $K_1$, $K_{2}$, $K_{3}$ are explicitly computable non-zero constants
that depend on $a$ and $t$.
\end{proposition}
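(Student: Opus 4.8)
The plan is to verify the three defining conditions of the model RH problem for $N_0$ directly from the ansatz \eqref{solN0}--\eqref{eq:F1F2F3}, using the jump relations \eqref{jump-zeta} for the branches $\zeta_1,\zeta_2,\zeta_3$ together with the fact that the cut of $D(\zeta)^{1/2}$ is the arc $\zeta_{2-}(\Delta_1)\cup\zeta_{2-}(\Delta_2)$. The key observation is that, by the description of the bijection \eqref{RSequation5} in Figure \ref{fig:Riemannmapping}, the image $\widetilde{\mathcal R}_j=\zeta_j(\mathcal R_j)$ meets this cut only along (part of) its boundary; hence each entry $F_i(\zeta_j(z))$ of $N_0$ is analytic wherever $\zeta_j$ is, i.e.\ in $\C\setminus\Delta_1$ for $j=1$, in $\C\setminus(\Delta_1\cup\Delta_2)$ for $j=2$, and in $\C\setminus\Delta_2$ for $j=3$. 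This already gives analyticity of $N_0$ in $\C\setminus(\Delta_1\cup\Delta_2)$; near the branch points $p_+,q,p_-$ and near the origin one at worst picks up a factor $|z-\cdot|^{-1/4}$ coming from $D^{1/2}$ composed with the local square-root behaviour of $\zeta_2$, which is harmless.

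Next I would check the jump relations. On $\Delta_1$ the function $\zeta_3$ is analytic and its image stays off the cut of $D^{1/2}$, so the third column of $N_0$ extends analytically across $\Delta_1$. By \eqref{jump-zeta}, $\zeta_{1+}=\zeta_{2-}$ and $\zeta_{1-}=\zeta_{2+}$ on $\Delta_1$; the arc $\zeta_{1+}(\Delta_1)=\zeta_{2-}(\Delta_1)$ is precisely (part of) the cut of $D^{1/2}$, and the boundary values $F_i(\zeta_{1+}(x))$ and $F_i(\zeta_{2-}(x))$ are limits of $F_i$ at the same point $\zeta$ from the $\widetilde{\mathcal R}_1$- and the $\widetilde{\mathcal R}_2$-side of that cut, hence differ only through the sign of $D^{1/2}$, so $F_i(\zeta_{1+}(x))=-F_i(\zeta_{2-}(x))$; the arc $\zeta_{1-}(\Delta_1)=\zeta_{2+}(\Delta_1)$ lies in the upper half-plane, off the cut, so $F_i(\zeta_{2+}(x))=F_i(\zeta_{1-}(x))$ with no sign. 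Column by column this is exactly \eqref{eq:Njump1} with $\alpha=0$. The same argument on $\Delta_2$, now using that $\zeta_1$ is analytic across $\Delta_2$ and that $\zeta_{3+}(\Delta_2)=\zeta_{2-}(\Delta_2)$ lies on the cut while $\zeta_{3-}(\Delta_2)=\zeta_{2+}(\Delta_2)$ does not, gives \eqref{eq:Njump2} with $\alpha=0$. I stress that analyticity and the jump relations hold for any triple of functions of the form (quadratic polynomial in $\zeta$)$/D(\zeta)^{1/2}$; the particular numerators in \eqref{eq:F1F2F3} are forced only by the behaviour at infinity.

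Finally I would pin down $K_1,K_2,K_3$ and $\zeta^*$ from \eqref{eq:Nasymptotics}. As $z\to\infty$ one has $\zeta_1(z)\to0$ and $\zeta_2(z),\zeta_3(z)\to\zeta_\infty$, with the expansions \eqref{zeta1}--\eqref{zeta3}, and $\zeta_\infty$ is a branch point of $D^{1/2}$. For the first column, $F_i(\zeta_1(z))=F_i(0)+\mathcal{O}(1/z)$, so one needs $F_1(0)=1$, which fixes $K_1=D(0)^{1/2}/\zeta_\infty^{2}\neq0$, while $F_2(0)=F_3(0)=0$ (and in fact $F_2(\zeta_1(z)),F_3(\zeta_1(z))=\mathcal{O}(1/z)$) because of the factor $\zeta$ in $F_2,F_3$. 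For the other two columns one uses $\zeta_2(z)-\zeta_\infty=-\tfrac{\sqrt a}{t}z^{-1/2}+\mathcal{O}(1/z)$, $\zeta_3(z)-\zeta_\infty=+\tfrac{\sqrt a}{t}z^{-1/2}+\mathcal{O}(1/z)$ and $D(\zeta)^{1/2}\sim c_\infty(\zeta-\zeta_\infty)^{1/2}$ near $\zeta_\infty$ with $c_\infty=[(\zeta_\infty-\zeta_p)(\zeta_\infty-\zeta_q)]^{1/2}\neq0$: the factor $(\zeta-\zeta_\infty)^2$ in $F_1$ makes $F_1(\zeta_{2,3}(z))\sim(\zeta-\zeta_\infty)^{3/2}=\mathcal{O}(z^{-3/4})$; the factor $(\zeta-\zeta_\infty)$ in $F_3$ makes $F_3(\zeta_{2,3}(z))\sim(\zeta-\zeta_\infty)^{1/2}=\mathcal{O}(z^{-1/4})$, whose leading coefficient is required to be $i/\sqrt2$ and $1/\sqrt2$ respectively, fixing $K_3\neq0$; and $F_2(\zeta_2(z))$ has an expansion in powers of $z^{-1/2}$ starting at $z^{1/4}$, from which one must kill the $z^{-1/4}$ term — one equation, linear in $\zeta^*$, which determines it, the solution satisfying $\zeta^*\neq\zeta_\infty$ (taking $\zeta^*=\zeta_\infty$ would turn $F_2$ into a scalar multiple of $F_3$ and destroy the $z^{1/4}$ growth) — and then normalise the leading coefficient to $1/\sqrt2$, fixing $K_2\neq0$. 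Because $\zeta_3(z)$ is obtained from $\zeta_2(z)$ by $z^{1/2}\mapsto-z^{1/2}$ (compare \eqref{zeta2} with \eqref{zeta3}), the third-column conditions are automatically compatible with the second-column ones, and one concludes that $N_0$ satisfies \eqref{eq:Nasymptotics} with $\alpha=0$; the compatibility of this normalisation at infinity with the jump on $\Delta_2$ is exactly the identity for $B(z)$ recorded just after \eqref{eq:Nasymptotics}.

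The main obstacle is the computation in this last step. One must track the branch of $z^{1/4}$ and the phase factors $i$ in the normalising matrix consistently between the second and third columns, verify that the $z^{-1/4}$-coefficient of $F_2(\zeta_2(z))$ is indeed a nontrivial affine function of $\zeta^*$ (so that it can be made to vanish without forcing $\zeta^*=\zeta_\infty$), and confirm that the resulting $K_1,K_2,K_3$ are nonzero and depend only on $a$ and $t$. All of this is a finite, if somewhat lengthy, computation with the expansions \eqref{zeta1}--\eqref{zeta3} and the explicit cubic $D(\zeta)$ of \eqref{defD}.
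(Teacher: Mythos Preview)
Your proposal is correct and follows essentially the same approach as the paper's proof: verify the jumps by tracking how $D(\zeta)^{1/2}$ changes sign across its cut $\zeta_{2-}(\Delta_1)\cup\zeta_{2-}(\Delta_2)$ combined with the gluing relations \eqref{jump-zeta}, and then determine $K_1,K_2,K_3,\zeta^*$ from the asymptotic condition \eqref{eq:Nasymptotics} via the expansions \eqref{zeta1}--\eqref{zeta3}. The paper likewise declares the asymptotic verification ``straightforward but cumbersome'' and only outlines it, so your level of detail is comparable.
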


\begin{proof}
Note that each of the functions $F_j$, $j=1,2,3$,
defined in \eqref{eq:F1F2F3} satisfies
\begin{equation*} 
    \begin{cases}
    F_{j+}(\zeta) = - F_{j-}(\zeta), & \quad \zeta \in
    \partial \widetilde{\mathcal R}_2 \cap \{ \Im \zeta < 0 \}, \\
    F_{j+}(\zeta) = F_{j-}(\zeta), & \quad \zeta \in
    \partial \widetilde{\mathcal R}_2 \cap \{ \Im \zeta > 0 \},
    \end{cases}
\end{equation*}
because of the choice of the branch cut for $D(\zeta)^{1/2}$.
From this it follows that the $j$th row $(N_{j1}, N_{j2}, N_{j3})$ of $N_0$
given in \eqref{solN0} has the following jumps on $\Delta_{1}$:
\begin{equation*}  
   \left\{ \begin{array}{lll}
   (N_{j1})_+(z) & = & - (N_{j2})_-(z), \\
   (N_{j2})_+(z) & = & (N_{j1})_-(z), \\
   (N_{j3})_+(z) & = &  (N_{j3})_-(z),
   \end{array} \right. \quad z \in \Delta_{1},
\end{equation*}
and the following jumps on $\Delta_{2}$:
\begin{equation*}  
    \left\{ \begin{array}{lll}
     (N_{j1})_+(z) & = & (N_{j1})_-(z), \\
     (N_{j2})_+(z) & = & (N_{j2})_-(z), \\
     (N_{j3})_+(z) & = &  - (N_{j2})_-(z),
     \end{array} \right. \quad z \in \Delta_{2}.
\end{equation*}
These are exactly the jumps required by \eqref{eq:Njump1} and \eqref{eq:Njump2}
when $\alpha = 0$.

It remains to verify the asymptotic condition \eqref{eq:Nasymptotics} with $\alpha = 0$.
Since the computations are straightforward but cumbersome, we give here the outline of the argument.
Observe that
\begin{equation}\label{correspondence}
    \zeta_1(\infty) =0\,, \quad \zeta _2(\infty) = \zeta _3(\infty) = \zeta_\infty\,.
\end{equation}
Function $F_1$ verifies
\begin{align*}
F_1(\zeta ) & = K_1\, \frac{\zeta _\infty^2}{D(0)^{1/2}}+\mathcal O (\zeta)\,, \quad \zeta \to 0\,, \qquad
F_1(\zeta )   = \mathcal O \left( (\zeta -\zeta _\infty)^{3/2}\right)\,, \quad \zeta \to \zeta _\infty\,.
\end{align*}
Taking into account \eqref{correspondence} and \eqref{zeta1}--\eqref{zeta3}, we get that as $z\to \infty$,
\begin{align*}
N_{11}(z) = F_1(\zeta_1(z) ) & = K_1\, \frac{\zeta _\infty^2}{D(0)^{1/2}}+\mathcal O (1/z)\,, \\
N_{12}(z) = F_1(\zeta_2(z) ) & = \mathcal O \left(z^{-3/4} \right)\,, \qquad
N_{13}(z) = F_1(\zeta_3(z) )   =  \mathcal O \left(z^{-3/4} \right) \,.
\end{align*}
With
$ K_1 = \zeta _\infty^{-2}\, D(0)^{1/2} $ it yields
\begin{equation*} 
N_{11}(z) = 1 + \mathcal{O}(1/z), \qquad N_{12}(z) = \mathcal{O}(z^{-3/4}), \qquad N_{13}(z) = \mathcal{O}(z^{-3/4}),
\end{equation*}
as $z \to \infty$, which matches the asymptotic condition for the first row of $N_0$ in \eqref{eq:Nasymptotics}.

Analogously,
\begin{align*}
F_2(\zeta ) & = \mathcal O (\zeta)\,, \quad \zeta \to 0\,, \\
F_2(\zeta )  & =   \beta _1 (\zeta -\zeta _\infty)^{-1/2}+ \beta _2 (\zeta -\zeta _\infty)^{1/2}+
    \mathcal O \left( (\zeta -\zeta _\infty)^{3/2}\right)  \,, \quad \zeta \to \zeta _\infty\,,
\end{align*}
where $\beta _1$ and $\beta _2$ are explicitly computable in terms of $K_2$, $\zeta ^*$ and
the rest of the parameters of $\mathcal R$. By \eqref{zeta1}--\eqref{zeta3}, and taking
into account the second relation in \eqref{zetaconj}, we have
\begin{align*}
N_{21}(z) = F_2(\zeta_1(z) ) & =\mathcal O (1/z)\,, \quad z\to \infty\,, \\
N_{22}(z) = F_2(\zeta_2(z) ) & =    z^{1/4}\, \left( \widetilde \beta _1 +
    \widetilde \beta _2 z^{-1/2}+ \widetilde \beta _3 z^{-1}+  \mathcal O \left(z^{-3/2}
    \right)\right)\,, \quad z\to \infty\,, \\
N_{23}(z) = F_2(\zeta_3(z) ) & = i \, z^{1/4}\, \left( \widetilde \beta _1 -
    \widetilde \beta _2 z^{-1/2}+ \widetilde \beta _3 z^{-1}+  \mathcal O \left(z^{-3/2}
    \right)\right)\,, \quad z\to \infty\,,
\end{align*}
where again $\widetilde \beta _j$'s are explicit. Imposing the condition that $
\widetilde \beta_1=1/\sqrt{2}$ and $\widetilde \beta_2=0$, which determines $K_2$ and $\zeta ^*$,
we obtain that for a certain constant $a_2$,
\begin{align*} 
    N_{21}(z) & = \mathcal{O}(1/z),  \\
    N_{22}(z) & = \frac{1}{\sqrt{2}} z^{1/4}\left(1+ \frac{a_2}{z} + \mathcal{O}(z^{-3/2})\right), \\
    N_{23}(z) & =  \frac{i}{\sqrt{2}} z^{1/4}\left(1+ \frac{a_2}{z} + \mathcal{O}(z^{-3/2})\right),
\end{align*}
matching the asymptotic condition for the second row of $N_0$ in \eqref{eq:Nasymptotics}.

Finally,
\begin{align*}
F_3(\zeta ) & = \mathcal O (\zeta)\,, \quad \zeta \to 0\,, \\
F_3(\zeta )  & =   \gamma _1 (\zeta -\zeta _\infty)^{1/2} + \mathcal O \left( (\zeta -\zeta _\infty)^{3/2}\right)
    \,, \quad \zeta \to \zeta _\infty\,,
\end{align*}
where $\gamma _1$ is explicitly computable in terms of $K_3 $  and the rest of the parameters of $\mathcal R$.
By \eqref{zeta1}--\eqref{zeta3}, and taking again into account the second relation in \eqref{zetaconj}, we have
\begin{align*}
N_{31}(z) = F_3(\zeta_1(z) ) & =\mathcal O (1/z)\,, \quad z\to \infty\,, \\
N_{32}(z) = F_3(\zeta_2(z) ) & =    z^{-1/4}\, \left( \widetilde \gamma _1 + \widetilde \gamma _2 z^{-1/2}+
    \mathcal O \left(z^{-1} \right)\right)\,, \quad z\to \infty\,, \\
N_{33}(z) = F_3(\zeta_3(z) ) & = -i \, z^{1/4}\, \left( \widetilde \gamma _1 - \widetilde \gamma _2 z^{-1/2} +
    \mathcal O \left(z^{-1} \right)\right)\,, \quad z\to \infty\,,
\end{align*}
where again $\widetilde \gamma _j$'s are explicit. Imposing the condition that $
\widetilde \gamma_1=i/\sqrt{2}$, which determines $K_3$, we obtain that for a certain constant $a_3$,
\begin{align*} 
    N_{31}(z) & = \mathcal{O}(1/z), \\
    N_{32}(z) & = \frac{i}{\sqrt{2}} z^{-1/4}\left(1+ \frac{a_3}{z^{1/2}} + \mathcal{O}(z^{-1})\right), \\
    N_{33}(z) & = \frac{1}{\sqrt{2}} z^{-1/4}\left(1-  \frac{a_3}{z^{1/2}} + \mathcal{O}(z^{-1})\right),
\end{align*}
as $z \to \infty$. This is precisely the asymptotic condition for the third row of $N_0$ in
\eqref{eq:Nasymptotics}. This concludes the proof.
\end{proof}

To construct the solution for general $\alpha$, we use functions
\begin{equation} \label{r123def}
    \begin{aligned}
    r_1(\zeta) & = \log(1-c\zeta), & \zeta \in \widetilde{\mathcal R}_1, \\
    r_2(\zeta) & = \log(1-k\zeta) - \log \zeta - \log(1-c\zeta), & \zeta \in \widetilde{\mathcal R}_2, \\
    r_3(\zeta) & = \log(1-c\zeta) + i \pi, & \zeta \in \widetilde{\mathcal R}_3.
    \end{aligned}
\end{equation}
where $c$ and $k$ are as in \eqref{definitionKandC}. The branches
of the logarithm are defined as follows.
\begin{itemize}
\item $\log(1-c\zeta)$ vanishes for $\zeta = 0$, and has a branch
cut along $\zeta_{2-}(\Delta_2)$ in Case 1, and along
$\zeta_{2-}(\Delta_2) \cup [\zeta_p, +\infty)$ in Case 2.
\item $\log(1-k\zeta)$ vanishes for $\zeta = 0$, and has a branch
cut along $(-\infty,1/k]$ in case 1 (when $k < 0$), and
along $[1/k,+\infty)$ in Case 2 (when $k > 0$).
\item $\log \zeta$ is the principal branch of the logarithm, i.e.,
with a cut along $(-\infty, 0]$.
\end{itemize}
With these definitions of the branches we have that $r_j$ is defined and analytic on
$\widetilde{\mathcal R}_j$ for $j=1,2,3$. To see this for $j=2$, it is important
to note that $1/k$ is in  $\widetilde{\mathcal R}_1$ in Case 1,
and $1/k$ is in $\widetilde{\mathcal R}_3$ in Case 2.

\begin{proposition} \label{solmrhp2}
 A solution of the model RH problem for general $\alpha$ is given
by
\begin{equation} \label{Nalpha}
    N_{\alpha}(z) = C_{\alpha} N_0(z) \begin{pmatrix}
    e^{\alpha G_1(z)} & 0 & 0 \\
    0 & e^{\alpha G_2(z)} & 0 \\
    0 & 0 & e^{\alpha G_3(z)}
    \end{pmatrix} \end{equation}
where $N_0$ is given by \eqref{solN0},
\begin{equation} \label{G1G2G3def}
    G_j(z) = r_j(\zeta_j(z)), \qquad j=1,2,3, \quad z \in \mathcal R_j,
    \end{equation}
with $r_1$, $r_2$, $r_3$ defined
in \eqref{r123def}, and $C_{\alpha}$ is a constant matrix
given explicitly in \eqref{Calpha} below.
\end{proposition}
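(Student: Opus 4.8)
The plan is to verify directly that the matrix in \eqref{Nalpha} satisfies the three conditions of the model RH problem for $N_\alpha$, exploiting the fact that $C_\alpha$ is a \emph{constant} matrix acting on $N_0$ from the left. Consequently $C_\alpha$ is irrelevant for analyticity and for the jump conditions, and is needed only to normalize the leading term of the expansion at $\infty$. So first I would note that, by the branch prescriptions listed after \eqref{r123def}, each $r_j$ is analytic on $\widetilde{\mathcal R}_j$; since $\zeta_1$ is analytic in $\C\setminus\Delta_1$, $\zeta_2$ in $\C\setminus(\Delta_1\cup\Delta_2)$, and $\zeta_3$ in $\C\setminus\Delta_2$, the functions $G_j=r_j\circ\zeta_j$ of \eqref{G1G2G3def} are analytic and single-valued in those same domains. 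Together with Proposition \ref{solmrhp1} this gives analyticity of $N_\alpha$ in $\C\setminus(\Delta_1\cup\Delta_2)$, provided one also checks that $G_1$ continues analytically across $\Delta_2$ and $G_3$ across $\Delta_1$ (so no spurious jump is created); this holds because $\zeta_1$, respectively $\zeta_3$, is real there and the relevant cut of $r_1$, respectively $r_3$, avoids that real segment, as read off from Figure \ref{fig:Riemannmapping}.

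For the jump relations, write $D(z)=\diag\!\big(e^{\alpha G_1(z)},e^{\alpha G_2(z)},e^{\alpha G_3(z)}\big)$, so the jump of $N_\alpha$ on $\Delta_1\cup\Delta_2$ equals $D_-^{-1}J_0\,D_+$, where $J_0$ is the $\alpha=0$ jump matrix of $N_0$. On $\Delta_1$, using $\zeta_{1\pm}=\zeta_{2\mp}$ from \eqref{jump-zeta} and the analyticity of $\zeta_3$, matching $D_-^{-1}J_0 D_+$ with \eqref{eq:Njump1} reduces to the scalar identities $G_{2+}-G_{1-}=\log x$, $G_{1+}-G_{2-}=-\log x$ and $G_{3+}=G_{3-}$; on $\Delta_2$, using $\zeta_{2\pm}=\zeta_{3\mp}$ and the analyticity of $\zeta_1$, matching with \eqref{eq:Njump2} reduces to $G_{1+}=G_{1-}$, $G_{2+}-G_{3-}=\log|x|$ and $G_{3+}-G_{2-}=-\log|x|$. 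Each of these reduces further, via \eqref{r123def} and the relation $z=\frac{1-k\zeta}{\zeta(1-c\zeta)^2}$ of \eqref{RSequation5} with $c,k$ as in \eqref{definitionKandC}, to an identity among the branches of $\log(1-c\zeta)$, $\log(1-k\zeta)$, $\log\zeta$, and $\log|x|$. The \emph{main obstacle} is precisely verifying that, with the prescribed cuts, the $2\pi i$-ambiguities cancel so that these identities hold exactly; this is the bookkeeping of the sheet structure encoded in Figure \ref{fig:Riemannmapping}, and it must be done separately in Case 1 and Case 2. Everything else is comparatively routine.

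Finally, for the behavior at $\infty$ I would expand $\zeta_j(z)$ by \eqref{zeta1}--\eqref{zeta3}. Since $\zeta_1(\infty)=0$ one gets $G_1(z)=\log(1-c\zeta_1(z))=\mathcal O(1/z)$, while near $\zeta_\infty=1/c$ one has $z\sim\mathrm{const}\,(\zeta-\zeta_\infty)^{-2}$, hence $\log(1-c\zeta)=-\tfrac12\log z+\mathrm{const}+\mathcal O(z^{-1/2})$; combined with \eqref{r123def} this yields $e^{\alpha G_2(z)}=\kappa_2\,z^{\alpha/2}\big(1+\mathcal O(z^{-1/2})\big)$ and $e^{\alpha G_3(z)}=\kappa_3\,z^{-\alpha/2}\big(1+\mathcal O(z^{-1/2})\big)$ for explicit constants $\kappa_2,\kappa_3$ obtained from the subleading terms of \eqref{zeta2}--\eqref{zeta3}. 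Multiplying the known expansion of $N_0$ (Proposition \ref{solmrhp1}) by $D(z)$, the matrix $\diag(1,z^{\alpha/2},z^{-\alpha/2})$ built into $D$ combines with $N_0$ to produce the explicit factor in \eqref{eq:Nasymptotics}, the $z^{-1/2}$ corrections (and their interaction with the $z^{1/4}$ versus $z^{-1/4}$ in that factor) contribute an $\mathcal O(1)$ off-diagonal piece in the $(2,3)$ position, and genuinely decaying remainders land in the $(I+\mathcal O(1/z))$ prefactor. Choosing $C_\alpha$ — which, like $C_1$ and $C_2$ in \eqref{C1def}, \eqref{C2def}, carries a nonzero off-diagonal entry in addition to the constants $\kappa_2,\kappa_3$ — to cancel this remaining constant left factor then gives exactly \eqref{eq:Nasymptotics} and, by reading off the computation, the explicit formula \eqref{Calpha} for $C_\alpha$.
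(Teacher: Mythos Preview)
Your proposal is correct and follows essentially the same route as the paper: reduce the jump conditions for $N_\alpha$ to the scalar identities $G_{2\pm}=G_{1\mp}+\log x$ on $\Delta_1$ and $G_{2\pm}=G_{3\mp}+\log|x|$ on $\Delta_2$, and then expand at infinity to fix $C_\alpha$. One simplification in the paper's argument that you may want to adopt: rather than expanding the individual logarithms in \eqref{r123def}, observe from \eqref{RSequation5} that $e^{G_1(z)}=1-c\zeta_1(z)$, $e^{G_2(z)}=z(1-c\zeta_2(z))$, and $e^{G_3(z)}=c\zeta_3(z)-1$; these closed forms make both the branch bookkeeping and the asymptotic expansion (via \eqref{zeta1}--\eqref{zeta3}) considerably cleaner, and give the explicit constants in \eqref{Calpha} with minimal effort.
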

\begin{proof}
From the definitions \eqref{r123def} with the specified branches of
the logarithm it follows that the functions $r_j$, $j=1,2,3$,
satisfy the following boundary conditions
\begin{equation} \label{r123jumps}
    \begin{aligned}
    r_2(\zeta) & = r_1(\zeta) + \log z, \quad \zeta \in \partial \widetilde{\mathcal R}_1, \\
    r_2(\zeta) & = r_3(\zeta) + \log |z|, \quad \zeta \in \partial \widetilde{\mathcal R}_3,
    \end{aligned}
\end{equation}
where $z = z(\zeta)$ is given by \eqref{RSequation5}.
Then by \eqref{r123jumps} and \eqref{G1G2G3def} we obtain
\begin{equation}
\begin{aligned} \label{G1G2G3jumps}
    G_{2\pm}(x) & = \log x + G_{1\mp}(x), \qquad x \in \Delta_1, \\
    G_{2\pm}(x) & = \log|x| + G_{3\mp}(x), \qquad x \in \Delta_2.
    \end{aligned}
    \end{equation}
Using \eqref{G1G2G3jumps} and the jump propertes \eqref{eq:Njump1}, \eqref{eq:Njump2}
for $\alpha = 0$,
it is then an easy calculation to show that \eqref{Nalpha} satisfies the
jump conditions \eqref{eq:Njump2} and \eqref{eq:Njump1}.

We note that by \eqref{r123def} and \eqref{G1G2G3def},
\[ e^{G_1(z)} = 1- c \zeta_1(z), \quad e^{G_2(z)} = z(1-c\zeta_2(z)),
    \quad e^{G_3(z)} = c\zeta_3(z) - 1, \]
and thus as $z \to \infty$ by \eqref{zeta1}, \eqref{zeta2}, \eqref{zeta3},
\begin{equation*}
\begin{aligned} 
    e^{G_1(z)}  & = 1 + \mathcal O(1/z), \\
    e^{G_2(z)}  & =  z^{1/2} \left(\sqrt{a}(1-t) + \frac{t(1-t)}{2\sqrt{z}} +
        \frac{t(1-t)(t+4a(1-t))}{8\sqrt{a}z} + \mathcal{O}(z^{-3/2}) \right), \\
    e^{G_3(z)}  & = z^{-1/2} \left(\sqrt{a}(1-t) - \frac{t(1-t)}{2\sqrt{z}} +
       \frac{t(1-t)(t+4a(1-t))}{8\sqrt{a}z} + \mathcal{O}(z^{-3/2})\right).
\end{aligned}
\end{equation*}
To obtain \eqref{eq:Nasymptotics} we should then take the constant prefactor $C_{\alpha}$
in \eqref{Nalpha} as
\begin{equation} \label{Calpha}
    C_{\alpha} = \begin{pmatrix} 1 & 0 & 0 \\
    0 & \sqrt{a} (1-t) & \frac{it(1-t)}{2} \\
    0 & 0 & \sqrt{a} (1-t)  \end{pmatrix}^{-\alpha}
    = \begin{pmatrix} 1 & 0 & 0 \\
    0 & (\sqrt{a} (1-t))^{-\alpha} & -\frac{i\alpha t(1-t)}{2} (\sqrt{a} (1-t))^{-\alpha-1} \\
    0 & 0 & (\sqrt{a} (1-t))^{-\alpha} \end{pmatrix}.
    \end{equation}
Then with the choice of \eqref{Calpha},
we indeed have that $N_{\alpha}$ defined in \eqref{Nalpha} satisfies the conditions in the
model RH problem for general $\alpha$.
\end{proof}

\begin{lemma}
\label{lemma:localN}
The solution $N_{\alpha}$ of the model RH problem given
in Proposition \ref{solmrhp2}
has the following behavior near the branch points
\begin{enumerate}
\item[\rm (a)] In Case 1 we have
\begin{equation} \label{asymptoticsPandQ1} N_{\alpha}(z) = \mathcal O \begin{pmatrix}
    |z-q|^{-1/4} & |z-q|^{-1/4} & 1 \\
    |z-q|^{-1/4} & |z-q|^{-1/4} & 1 \\
    |z-q|^{-1/4} & |z-q|^{-1/4} & 1 \end{pmatrix} \quad \textrm{as } z \to q,
\end{equation}
\begin{equation} \label{asymptoticsPandQ2} N_{\alpha}(z) = \mathcal O \begin{pmatrix}
    |z-p|^{-1/4} & |z-p|^{-1/4} & 1 \\
    |z-p|^{-1/4} & |z-p|^{-1/4} & 1 \\
    |z-p|^{-1/4} & |z-p|^{-1/4} & 1
\end{pmatrix} \quad \textrm{as } z \to p,
\end{equation}
and
\begin{equation}\label{asymptoticsPandQ3} N_{\alpha}(z)
\begin{pmatrix} 1 & 0 & 0 \\
    0 & z^{-\alpha/2} & 0 \\ 0 & 0 & z^{\alpha/2}  \end{pmatrix}
    = \mathcal O \begin{pmatrix} 1 & |z|^{-1/4} & |z|^{-1/4} \\
    1 & |z|^{-1/4} & |z|^{-1/4} \\
    1 & |z|^{-1/4} & |z|^{-1/4}
\end{pmatrix} \quad \textrm{as } z \to 0,
\end{equation}
\item[\rm (b)] In Case 2 we have
\begin{equation}\label{asymptoticsPandQ4}
 N_{\alpha}(z) = \mathcal O \begin{pmatrix} |z-q|^{-1/4} & |z-q|^{-1/4} & 1 \\
    |z-q|^{-1/4} & |z-q|^{-1/4} & 1 \\
    |z-q|^{-1/4} & |z-q|^{-1/4} & 1
    \end{pmatrix} \quad \textrm{as } z \to q,
\end{equation}
\begin{equation} N_{\alpha}(z) = \mathcal O \begin{pmatrix} 1 & |z-p|^{-1/4} & |z-p|^{-1/4} \\
    1 & |z-p|^{-1/4} & |z-p|^{-1/4} \\
    1 & |z-p|^{-1/4} & |z-p|^{-1/4}
\end{pmatrix} \quad \textrm{as } z \to p,
\end{equation}
and
\begin{equation} N_{\alpha}(z)
\begin{pmatrix} z^{\alpha/2} & 0 & 0 \\
    0 & z^{-\alpha/2} & 0 \\ 0 & 0 & 1  \end{pmatrix}
    = \mathcal O \begin{pmatrix}
    |z|^{-1/4} & |z|^{-1/4} & 1 \\
    |z|^{-1/4} & |z|^{-1/4} & 1 \\
    |z|^{-1/4} & |z|^{-1/4} & 1
\end{pmatrix}
 \quad \textrm{as } z \to 0.
\end{equation}
\end{enumerate}
\end{lemma}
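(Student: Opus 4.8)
The plan is to read off all these estimates directly from the closed form $N_\alpha = C_\alpha\, N_0 \,\diag\!\left(e^{\alpha G_1},e^{\alpha G_2},e^{\alpha G_3}\right)$ provided by Proposition~\ref{solmrhp2}. Since $C_\alpha$ is a constant invertible matrix it plays no role in any $\mathcal O$-estimate, so the lemma reduces to two separate local analyses: the behavior of $N_0$ near each of the points $p$, $q$, $0$, and the behavior of the diagonal factor $\diag(e^{\alpha G_1},e^{\alpha G_2},e^{\alpha G_3})$ near those points. Throughout I will use that only upper bounds are claimed, so that a possible vanishing of a numerator of $F_1$, $F_2$, $F_3$ at one of the branch points only improves the estimate and may be ignored.

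For $N_0$: from \eqref{eq:F1F2F3} the only source of unboundedness is the factor $D(\zeta)^{1/2}$ in the denominators, which by \eqref{defD} has its three square-root branch points exactly at $\zeta_p$, $\zeta_q$, $\zeta_\infty$; comparing degrees of numerator and denominator also gives $F_j(\zeta)=\mathcal O(\zeta^{1/2})$ as $\zeta\to\infty$. The map $z=z(\zeta)$ of \eqref{RSequation5} has simple branch points at $\zeta_p$ and $\zeta_q$, so $\zeta_j(z)-\zeta_q=\mathcal O(|z-q|^{1/2})$ for the two sheets $\mathcal R_j$ meeting over $q$ (and likewise at $p$), while $z\sim -k/(c^2\zeta^2)$ as $\zeta\to\infty$ places the branch point of $\mathcal R$ over $z=0$ at $\zeta=\infty$, with $\zeta_j(z)=\mathcal O(|z|^{-1/2})$ for the two sheets meeting over $0$ (consistent with \eqref{zeta-origin1}, \eqref{zeta-origin2}). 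Reading off the sheet structure from Figure~\ref{Riemann1} --- sheets $1,2$ meet over $q$ in both cases, over $p$ in Case 1, and over $0$ in Case 2; sheets $2,3$ meet over $p$ in Case 2 and over $0$ in Case 1 --- one concludes that at each branch point exactly the two columns of $N_0$ whose sheets meet there are $\mathcal O$ of (distance)$^{-1/4}$ while the third column stays bounded. This already gives the matrix patterns in \eqref{asymptoticsPandQ1}, \eqref{asymptoticsPandQ2}, \eqref{asymptoticsPandQ4}, and the $|z|^{-1/4}$-pattern of $N_0$ near $z=0$ in both cases.

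For the diagonal factor I will use the identities $e^{G_1}=1-c\zeta_1$, $e^{G_2}=z(1-c\zeta_2)$, $e^{G_3}=c\zeta_3-1$ established in the proof of Proposition~\ref{solmrhp2}. Near $p$ and $q$ the relevant $\zeta_j$ take finite values different from $0$, $1/c$, $1/k$, so each $G_j$ is bounded there and $\diag(e^{\alpha G_j})$ is bounded and invertible; hence $N_\alpha$ inherits the behavior of $N_0$, which is precisely \eqref{asymptoticsPandQ1}, \eqref{asymptoticsPandQ2}, and the $q$-estimate of part (b). Near $z=0$, the local expansions \eqref{zeta-origin1}, \eqref{zeta-origin2} give, up to factors that are bounded and bounded away from $0$: in Case 1, $e^{\alpha G_1}=\mathcal O(1)$, $e^{\alpha G_2}$ of order $z^{\alpha/2}$, $e^{\alpha G_3}$ of order $z^{-\alpha/2}$; in Case 2, $e^{\alpha G_1}$ of order $(-z)^{-\alpha/2}$, $e^{\alpha G_2}$ of order $z^{\alpha/2}$, $e^{\alpha G_3}=\mathcal O(1)$. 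Multiplying $N_\alpha$ on the right by $\diag(1,z^{-\alpha/2},z^{\alpha/2})$ in Case 1, resp.\ $\diag(z^{\alpha/2},z^{-\alpha/2},1)$ in Case 2, exactly cancels those powers, so the product equals $N_0$ times a bounded invertible factor, and the two relevant columns inherit the $\mathcal O(|z|^{-1/4})$ bound from the previous step; this is \eqref{asymptoticsPandQ3} and its Case-2 counterpart.

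The only delicate point is the bookkeeping at $z=0$: there one must combine the $|z|^{-1/4}$ growth of $F_j(\zeta_j(z))$ (from $\zeta_j(z)=\mathcal O(|z|^{-1/2})$ and $F_j(\zeta)=\mathcal O(\zeta^{1/2})$) with the $z^{\pm\alpha/2}$ growth of the $e^{\alpha G_j}$ and with the conjugating diagonal matrix, making sure the fractional powers cancel with the correct branches --- near $\Delta_2$ one has $z<0$, so $(-z)^{1/2}$ is the natural local variable, and one should check that the branch cuts of the logarithms defining $r_1$, $r_2$, $r_3$ in \eqref{r123def} are not crossed as $z\to 0$ within the relevant sector. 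Everything else is a routine comparison of orders.
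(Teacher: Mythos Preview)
Your proposal is correct and follows essentially the same approach as the paper: read off the column-by-column behavior of $N_0$ from the square-root branching of $D(\zeta)^{1/2}$ combined with the square-root behavior of the inverse maps $\zeta_j$, then observe that the diagonal factor $\diag(e^{\alpha G_j})$ is bounded and invertible near $p$ and $q$ and contributes exactly the $z^{\pm\alpha/2}$ powers near $0$. The paper's proof only spells out the $z\to q$ case and waves off the remaining ones as ``similar''; your write-up is in fact more complete, in particular in tracking the $e^{\alpha G_j}$ factors near the origin via the identities $e^{G_1}=1-c\zeta_1$, $e^{G_2}=z(1-c\zeta_2)$, $e^{G_3}=c\zeta_3-1$ and the expansions \eqref{zeta-origin1}--\eqref{zeta-origin2}.
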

\begin{proof}
Observe that for $j=1, 2, 3$,
$F_j(\zeta )=\mathcal O \left( (\zeta -\zeta _q)^{-1/2}\right)$ as $\zeta \to \zeta_q$,
where $F_j$'s are defined in \eqref{eq:F1F2F3}. Furthermore, for the mapping
\eqref{RSequation5}, $\zeta_1^{-1}(\zeta _q)=\zeta_3^{-1}(\zeta _q)=q$, and $\zeta_3^{-1}(\zeta _q)$
is a regular point of $\RR$. Since functions $\zeta _1$ and $\zeta _2$ are bounded and have a square
root branch at $q$, by definition \eqref{solN0} we obtain \eqref{asymptoticsPandQ1} for $N_0$.
Since the transformation in \eqref{Nalpha} does not affect the behavior at $q$, this proves the
first identity of the Lemma. The rest of the conditions is analyzed in a similar fashion, and we omit the details.
\end{proof}

By \eqref{eq:Njump1}--\eqref{eq:Njump2}, we have that $\det N_\alpha $ is analytic in
$\C\setminus \{ 0, p, q\}$, and by Lemma \ref{lemma:localN},
$\det N_\alpha(z)=\mathcal{O}\left(|z-z_0|^{-1/2}\right)$ as $z\to z_0$ where
$z_0$ is any one of the branch points $0$, $p$ and $q$. Hence, $\det N_\alpha$ is entire.
From \eqref{eq:Nasymptotics}, $\lim_{z\to \infty} \det N_\alpha(z)=1$, and we conclude that
\begin{equation}\label{det1}
    \det N_\alpha(z)\equiv 1\,, \quad z\in \C\,.
\end{equation}
Comparing the local behavior in Proposition \ref{prop:RHforT} and Lemma \ref{lemma:localN}, we see that near
the branch points, the matrix $TN_{\alpha}^{-1}$ is not bounded which
means that $N_{\alpha}$ is not a good approximation to $T$. Hence we
need a local analysis around these points.

\section{Parametrices near the branch points $p$ and $q$ 
(soft edges)}\label{section7}

We are going to construct a local parametrix $P$ around $q$. The local parametrix around $p$ can be
built in a similar way, and is not further discussed here. Consider a
small fixed disk $B_\delta $ with radius $\delta >0$ and center at $q$ that does not
contain any other branch point. We look for a $3 \times 3$ matrix valued function $P$ such that
\begin{figure}[htb]
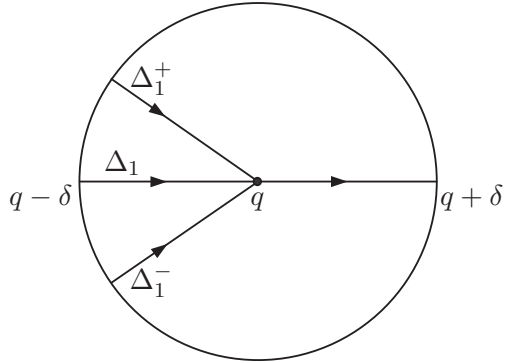

\centering \begin{overpic}[scale=1.4]%
{parametrix1}%
\put(46,44.5){  $ q $}
\put(90,44.5){  $ q+\delta $}
\put(-10,44.5){  $ q-\delta $}
\put(12,52.5){  $  \Delta_1 $}
\put(18,72){  $  \Delta_1^+ $}
\put(18,25){  $  \Delta_1^- $}
\end{overpic}
\caption{Construction of a parametrix around $q$.}
\label{fig:parametrix1}
\end{figure}
\begin{enumerate}
\item $P$ is analytic in $B_\delta \setminus (\R\cup \Delta_1^\pm)$.
\item $P$ has a
jump $P_+(z)=P_-(z)\, j_{T} (z)$ on each of the oriented contours shown in Figure
\ref{fig:parametrix1}, given by the restriction of $j_T$ in Proposition \ref{prop:RHforT} to
these contours. Namely,
{\allowdisplaybreaks
\begin{align*}
j_{T} (x) &=
    \begin{pmatrix}
0 &  x^{\alpha}          & 0    \\
-x^{-\alpha} & 0 & 0 \\
0 &  0 & 1\\
    \end{pmatrix}, \quad  x\in (q-\delta , q)= \Delta_1 \cap B_\delta ,
\\
j_{T}(z)&=
  I + z^{-\alpha} e^{n (\lambda_{2}-\lambda_{1})(z)} E_{21}  , \quad z\in\Delta_{1}^{\pm} \cap B_\delta , \\
j_{T}(x)&=
    I + x^\alpha e^{n (\lambda_{1}-\lambda_{2}) (x)} E_{1 2},
    \quad x\in (q,q+\delta).
\end{align*}}
\item As $n\to \infty$,
\[
P(z)=N_{\alpha}(z) (I+\mathcal{O}(1/n)) \quad \text{uniformly for $z \in \partial B_\delta \setminus
    (\R \cup \Delta_1^\pm)$,}
\]
where $N_{\alpha}$ is the global parametrix built in Section \ref{section6}.
\item $P$ is bounded as $z\to q$, $z\in \R\setminus \Delta_1^\pm$.
\end{enumerate}

The solution of the RH problem 1.--4.) can be built in a standard way using the Airy functions; we follow the
scheme proposed in \cite{De,DKMVZ1,DKMVZ2} and developed, for instance, in \cite{BK2,DKV,KVAW}.
The function
\begin{align} \label{fqdef}
    f(z) =
    \left[\frac{3}{4}(\lambda_2-\lambda_1)(z)\right]^{2/3}
\end{align}
is a biholomorphic (conformal) map of a neighborhood of $q$ onto a
neighborhood of the origin such that $f(z)$ is real and positive
for $z>q$. We may deform the contours $\Delta_1^\pm$ near $q$ in such a way that $f$ maps
$\Delta_1^\pm \cap B_\delta $ to the rays with angles
$\frac{2\pi}{3}$ and $-\frac{2\pi}{3}$, respectively.
We put
\begin{equation*} 
    y_0(s)=\Ai(s), \quad  y_1(s) = \omega \Ai(\omega s),
    \quad y_2(s) = \omega^{ 2 } \Ai(\omega^{ 2 } s), \quad \omega=e^{2\pi i/3}\,,
\end{equation*}
where $\Ai$ is the usual Airy function. Define the matrix $\Psi$ by
\begin{align*}
    \Psi(s) & =
    \begin{pmatrix}
        y_0(s) &  -y_2(s) & 0 \\
        y_0'(s) &  -y_2'(s) & 0 \\
        0 & 0  & 1
    \end{pmatrix},
    \quad \arg s \in (0, 2 \pi/3),
    \\
    \Psi(s) & =
    \begin{pmatrix}
        -y_1(s) &  -y_2(s) & 0 \\
        -y_1'(s) &  -y_2'(s) & 0 \\
        0 & 0   & 1
    \end{pmatrix},
     \quad  \arg s \in (2\pi/3,\pi),
     \\
    \Psi(s) & =
    \begin{pmatrix}
        -y_2(s) &   y_1(s) & 0 \\
        -y_2'(s) &  y_1'(s) & 0 \\
         0 & 0 &   1
    \end{pmatrix},
    \quad   \arg s \in (-\pi,-2\pi/3), 
    \\
    \Psi(s) & =
    \begin{pmatrix}
        y_0(s) & y_1(s) & 0 \\
        y_0'(s) & y_1'(s) & 0 \\
        0 & 0 &   1
    \end{pmatrix},
    \quad   \arg s \in (-2\pi/3,0). 
\end{align*}
Then (see e.g.\ \cite[Section 7.6]{De}), for any analytic prefactor $E$, we have that
\begin{align} \label{Psoft}
    P(z) = E(z)
    \Psi\left(n^{2/3}f (z)\right)\diag\left(z^{-\alpha /2} e^{\frac{n}{2}(\lambda_2-\lambda_1)(z)},
      z^{\alpha /2} e^{-\frac{n}{2}(\lambda_2-\lambda_1)(z)},1\right)
\end{align}
satisfies the parts 1.--3.\ of the RH problem for $P$. The freedom in $E$ can
be used to satisfy also the matching condition (4). The construction of $E$ uses
the asymptotics of the Airy function $\Ai(s)$  as $s \to \infty$, and follows the
scheme, exposed in the literature (see e.g.\ \cite{K}), and we omit the details here.
The result is the following.

\begin{proposition} \label{prop:Psoft}
The matrix-valued function $P$ given in \eqref{Psoft} with
$E$ given by
\begin{multline} \label{eq:defE}
    E(z) =   N_{\alpha}(z)\, \begin{pmatrix}
        z^{\alpha /2} &   0 & 0 \\
        0 &   z^{- \alpha /2} & 0 \\
        0 &   0 & 1
    \end{pmatrix} \\  \times  \begin{pmatrix}
   \sqrt{\pi} & -\sqrt{\pi} & 0 \\
   -i \sqrt{\pi} & -i \sqrt{\pi} & 0 \\
   0 & 0 & 1
   \end{pmatrix} \,
    \begin{pmatrix}
        n^{1/6}f ^{1/4}(z) &   0 & 0 \\
        0 &   n^{-1/6}f ^{-1/4}(z) & 0 \\
        0 &   0 & 1
    \end{pmatrix}\,,
\end{multline}
satisfies all conditions 1.--4. in the RH problem for $P$.
\end{proposition}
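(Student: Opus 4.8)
The plan is to verify conditions 1--4 along the classical Airy-parametrix scheme (see \cite{De,DKMVZ1,DKMVZ2} and, for the organization of the prefactor, \cite{K}), keeping track of the two features peculiar to the present $3\times 3$ setting: the third row and column of every jump near $q$ carry an inert $1$, and, since $q>0$, the only branch cut through $q$ is that of $\lambda_2-\lambda_1$ (equivalently of $f$) and not that of $z^{\alpha}$. First I would record the standard facts about the Airy block $\Psi$: it is analytic off the four rays $\arg s\in\{0,\pm 2\pi/3,\pi\}$; its jumps, which by the connection formula $\Ai(s)+\omega\Ai(\omega s)+\omega^2\Ai(\omega^2 s)=0$ and the Airy Wronskian come out to $I+E_{12}$ on $\arg s=0$, $I+E_{21}$ on $\arg s=\pm 2\pi/3$, and the antidiagonal block $\left(\begin{smallmatrix}0&1\\-1&0\end{smallmatrix}\right)$ on $\arg s=\pi$; and the uniform asymptotics $\Psi(s)=\diag\bigl(s^{-1/4},s^{1/4},1\bigr)\bigl(A+\mathcal O(s^{-3/2})\bigr)e^{-\frac23 s^{3/2}\diag(1,-1,0)}$ as $s\to\infty$, with $A=\frac1{2\sqrt\pi}\left(\begin{smallmatrix}1&i\\-1&i\end{smallmatrix}\right)\oplus 1$ (so $A^{-1}$ equals the $2\times2$ constant matrix appearing in \eqref{eq:defE}, extended by $1$). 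All of this is classical.

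Granting that $E$ is analytic in $B_\delta$ (proved below), condition 1 is immediate, and condition 4 follows at once: $E$ is then bounded at $q$, $\Psi(n^{2/3}f(z))$ is entire in $z$ hence bounded there (the Airy function and its derivative are entire), and the diagonal factor in \eqref{Psoft} stays bounded near $q>0$ because $(\lambda_2-\lambda_1)(z)=\tfrac43 f(z)^{3/2}=\mathcal O\bigl((z-q)^{3/2}\bigr)$. For condition 2 I note that $f$ maps the four rays through $q$ in Figure~\ref{fig:parametrix1} onto the four Airy rays, $E$ contributes no jump, and the third slot is inert, so it remains to conjugate the constant Airy jumps by $D(z):=\diag\bigl(z^{-\alpha/2}e^{\frac n2(\lambda_2-\lambda_1)},z^{\alpha/2}e^{-\frac n2(\lambda_2-\lambda_1)},1\bigr)$. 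Using $n^{2/3}f=\bigl[\tfrac34 n(\lambda_2-\lambda_1)\bigr]^{2/3}$ (so the Airy exponential matches $e^{\mp\frac n2(\lambda_2-\lambda_1)}$) and, on $\Delta_1$, $(\lambda_2-\lambda_1)_+=-(\lambda_2-\lambda_1)_-$ while $z^{\pm\alpha/2}$ is analytic, one finds: on $(q-\delta,q)$ the antidiagonal jump becomes $\left(\begin{smallmatrix}0&z^{\alpha}\\-z^{-\alpha}&0\end{smallmatrix}\right)\oplus 1$, the powers $z^{\pm\alpha}=z^{\pm\alpha/2}\cdot z^{\pm\alpha/2}$ appearing because the swap interchanges the two diagonal entries of $D$; on $\Delta_1^{\pm}$ it becomes $I+z^{-\alpha}e^{n(\lambda_2-\lambda_1)(z)}E_{21}$; and on $(q,q+\delta)$ it becomes $I+z^{\alpha}e^{n(\lambda_1-\lambda_2)(z)}E_{12}$ --- exactly the restrictions of $j_T$ in Proposition~\ref{prop:RHforT}.

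The crux is the analyticity of $E$. The only arc of $B_\delta$ on which $E$ could jump is $(q-\delta,q)=\Delta_1\cap B_\delta$, since on $\Delta_1^{\pm}$ and on $(q,q+\delta)$ each of $N_\alpha$, $z^{\pm\alpha/2}$ and $f^{1/4}$ is analytic (recall $f$ is biholomorphic near $q$ with a simple zero and is negative on $(q-\delta,q)$). On $(q-\delta,q)$, $N_\alpha$ has the antidiagonal jump of \eqref{eq:Njump1}, $z^{\pm\alpha/2}$ is analytic ($q>0$), and $\diag(n^{1/6}f^{1/4},n^{-1/6}f^{-1/4},1)$ jumps by $\diag(i,-i,1)$; the constant matrix in \eqref{eq:defE}, whose columns are the eigenvectors of $\left(\begin{smallmatrix}0&1\\-1&0\end{smallmatrix}\right)$ with eigenvalues $\mp i$, is tailored precisely so that these three jumps cancel, so $E$ continues analytically across $(q-\delta,q)$. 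Thus $E$ is analytic in $B_\delta\setminus\{q\}$ and single-valued there; by Lemma~\ref{lemma:localN} the first two columns of $N_\alpha$ are $\mathcal O(|z-q|^{-1/4})$ and the third is $\mathcal O(1)$, so, since the first column of $E$ gains the factor $f^{1/4}=\mathcal O\bigl((z-q)^{1/4}\bigr)$ and the third is unchanged, $E(z)=\mathcal O(|z-q|^{-1/2})$; a single-valued matrix function with this bound has a removable singularity at $q$, hence $E$ is analytic on all of $B_\delta$, closing the gap left in the previous step.

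Finally, condition 3. Inserting the Airy asymptotics into $\Psi(n^{2/3}f(z))$ and using $\tfrac23\bigl(n^{2/3}f(z)\bigr)^{3/2}=\tfrac n2(\lambda_2-\lambda_1)(z)$, the exponential factor combines with the diagonal factor of \eqref{Psoft} to give $\diag(z^{-\alpha/2},z^{\alpha/2},1)$; since by \eqref{eq:defE} one has $E(z)=N_\alpha(z)\diag(z^{\alpha/2},z^{-\alpha/2},1)A^{-1}\diag\bigl((n^{2/3}f(z))^{1/4},(n^{2/3}f(z))^{-1/4},1\bigr)$, the leading term multiplies out to exactly $N_\alpha(z)$, and the $\mathcal O(s^{-3/2})$ Airy correction, conjugated only by the constant $A$ and by the matrices $\diag(z^{\pm\alpha/2},\cdot,\cdot)$ (bounded on $\partial B_\delta$), yields $P(z)=N_\alpha(z)\bigl(I+\mathcal O(s^{-3/2})\bigr)$ with $s=n^{2/3}f(z)$; as $|f(z)|\ge c\delta^{3/2}$ uniformly on $\partial B_\delta$, $s\to\infty$ uniformly and $\mathcal O(s^{-3/2})=\mathcal O(1/n)$, which is condition 3. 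The main obstacle is exactly the double role played by the constant matrix in \eqref{eq:defE}: it must simultaneously annihilate the $N_\alpha$-jump on $\Delta_1$ (so that $E$ is analytic) and be the inverse of the leading Airy coefficient $A$ (so that the matching holds), together with the removable-singularity argument at $q$, which hinges on the precise $|z-q|^{-1/4}$ bound from Lemma~\ref{lemma:localN}; everything else is the standard scheme, the genuinely new points being only the inert third sheet and the fact that at $q>0$ the relevant branch cut is that of $\lambda_2-\lambda_1$ rather than of $z^{\alpha}$.
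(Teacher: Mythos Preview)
Your proof is correct and follows exactly the standard Airy-parametrix scheme that the paper invokes (citing \cite{De,DKMVZ1,DKMVZ2,K}) without writing out; you have supplied the details the paper omits, in particular the cancellation of jumps showing $E$ is analytic, the removable-singularity argument at $q$ via Lemma~\ref{lemma:localN}, and the matching on $\partial B_\delta$. One cosmetic point: since $f$ has a \emph{simple} zero at $q$, on $\partial B_\delta$ you have $|f(z)|\ge c\delta$ rather than $c\delta^{3/2}$, but this only strengthens your $\mathcal O(1/n)$ estimate.
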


%

\section{Parametrix near the branch point $0$ (hard edge)}\label{section8}

From the local behavior of $T(z)$ as $z\to \infty$, described in Proposition \ref{prop:RHforT},
it follows that the local parametrix $P$ at the origin  will be different from
the parametrices at the other branch points.
Fortunately, this kind of behavior has been analyzed in \cite{KMVAV} (for a $2\times 2$
matrix valued RH problem),
and in \cite{LW} (for a $3 \times 3$ matrix valued RH problem)
and we use the construction from these papers.
There will be a new feature though in the case $-1<\alpha <0$.

\subsection{Case 1.} Let $B_\delta $ be a small fixed disk with radius $\delta >0$, now centered
at the origin, that does not contain any other branch point.
Consider all the jumps matrices $j_T$ of matrix $T$ on curves meeting at $0$,
see item 2.\ in Proposition \ref{prop:RHforT}.
The off-diagonal entry in $j_T$ on $(0,\delta)$ is $x^{\alpha} e^{n(\lambda_1-\lambda_2)(x)}$,
which is exponentially small since $\Re (\lambda_1 - \lambda_2) < -c < 0$ on $[0,\delta)$.
This suggests that we may ignore the jump on $(0,\delta)$ in the construction
of the local parametrix.
Note however, that $x^{\alpha}$ is not bounded as $x \to 0$ in case $-1 < \alpha < 0$, which
explains why we need an extra argument for this case.

\begin{figure}[htb]
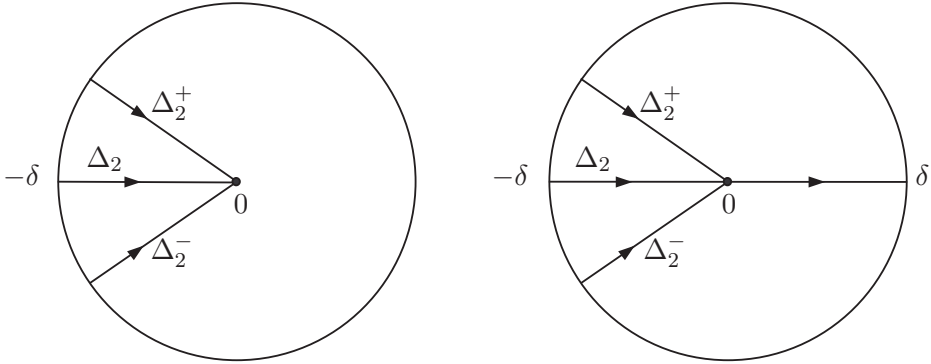

\centering \begin{overpic}[scale=1.4]%
{parametrix2}%
\put(22,20){  $ 0 $}
\put(75,20){  $ 0 $}
\put(-3,23){  $  -\delta $}
\put(50,23){  $  -\delta $}
\put(96,23){  $  \delta $}
\put(59,25){  $  \Delta_2 $}
\put(6,25){  $  \Delta_2 $}
\put(13,31){  $  \Delta_2^+ $}
\put(66,31){  $  \Delta_2^+ $}
\put(13,15){  $  \Delta_2^-$}
\put(66.5,15){  $  \Delta_2^- $}
\end{overpic}
\caption{Contours for the local parametrix around $0$ in the Case 1,
for $\alpha \geq 0$ (left picture) and for $-1< \alpha < 0$ (right picture).}
\label{fig:parametrix2}
\end{figure}

\subsubsection{First part of the construction (which works for $\alpha \geq 0$)}
In this part we simply
disregard the jump matrix on $(0, \delta )$. Taking into account
Proposition \ref{prop:RHforT}, we thus look for a $3 \times 3$ matrix valued function $Q$ such that
\begin{enumerate}
\item $Q$ is analytic in $B_\delta \setminus (\Delta_2 \cup \Delta_2^\pm)$.
\item $Q$ has a
jump $Q_+(z)=Q_-(z)j_{T} (z)$ on $(\Delta_2 \cup \Delta_2^{\pm}) \cap B_{\delta}$,
see the left picture  in Figure \ref{fig:parametrix2}. The jump matrices are given by
{\allowdisplaybreaks
\begin{align*}
j_{T}(x)&=
    \begin{pmatrix}
    1 & 0                  & 0 \\
0 & 0 & -|x|^{-\alpha}\\
0 & |x|^{\alpha} & 0
    \end{pmatrix}, \quad x\in (-\delta , 0)=\Delta_{2} \cap B_\delta , \\
j_{T}(z)&=
    I + e^{\pm \alpha \pi i} z^{-\alpha}e^{n (\lambda_{2}-\lambda_{3})(z)} E_{2 3},
    \quad z\in\Delta_{2}^{\pm} \cap B_\delta . 
\end{align*}}
\item
For $-1<\alpha<0$, $Q(z)$ behaves near the origin like:
\begin{equation}\label{P01}
Q(z)=\mathcal{O}\begin{pmatrix}
            1 & |z|^{\alpha} & 1 \\
            1 & |z|^{\alpha} & 1 \\
            1 & |z|^{\alpha} & 1
            \end{pmatrix}, \quad \text{as }z\to 0.
\end{equation}
For $\alpha=0$, $Q(z)$ behaves near the origin like:
\begin{equation} \label{P02}
Q(z)=
\mathcal{O}\begin{pmatrix}
            1 & \log |z| & \log |z| \\
            1 & \log |z| & \log |z| \\
            1 & \log |z| & \log |z|
            \end{pmatrix}, \quad \text{as }z\to 0.  
\end{equation}
For $\alpha>0$, $Q(z)$ behaves near the origin like:
\begin{align} \label{P03}
Q(z) & =
\mathcal{O}\begin{pmatrix}
            1 & 1 & |z|^{-\alpha} \\
            1 & 1 & |z|^{-\alpha}\\
            1 & 1 & |z|^{-\alpha}
            \end{pmatrix}, \quad \text{as }z\to 0 \text{ in the lens around $\Delta_2$, bounded by $\Delta_2^\pm$},
\\ \label{P03bis}
Q(z) &=\mathcal{O}\begin{pmatrix}
             1 & 1 & 1 \\
             1 & 1 & 1 \\
             1 & 1 & 1
            \end{pmatrix}, \quad \text{as }z\to 0\text{ outside the lens.}
\end{align}
\item As $n\to \infty$,
\begin{equation} \label{P0match}
Q(z)=N_{\alpha}(z) (I+\mathcal{O}(1/n)) \quad \text{uniformly for
$z \in \partial B_\delta \setminus (\Delta_2 \cup \Delta_2^\pm)$,}
\end{equation}
where $N_{\alpha}$ is the parametrix built in Section \ref{section6}.
\end{enumerate}

Consider
\begin{equation} \label{tildeQdefCase1}
    \widetilde{Q} (z)= Q(z)
    \diag \left ( 1,\;  (\pm 1)^n  z^{-\alpha /2} e^{\frac{n}{2}(\lambda _2-\lambda _3)(z)}, \;
    (\pm 1)^n z^{\alpha /2} e^{-\frac{n}{2}(\lambda _2-\lambda _3)(z)}  \right), \quad
    \text{for $\pm \Im z > 0$},
\end{equation}
where $z^{ \alpha /2}$ denotes the principal branch, as usual.
By Lemma \ref{lem:analyticity}, the diagonal factor in \eqref{tildeQdefCase1}
is analytic in
 $B_\delta \setminus (-\delta ,0)$.
It follows that the matrix valued function $\widetilde{Q}$ should satisfy:
\begin{enumerate}
\item $\widetilde{Q}$ is analytic in $B_\delta \setminus (\R\cup \Delta_2^\pm)$.
\item $\widetilde{Q}$ has a jump $\widetilde{Q}_+(z)= \widetilde{Q}_-(z)j_{\widetilde{Q}} (z)$
on each of the oriented contours shown in Figure \ref{fig:parametrix2}, left. They are given by
{\allowdisplaybreaks
\begin{align*}
j_{\widetilde{Q}}(x)&=
    \begin{pmatrix}
    1 & 0                  & 0 \\
    0 & 0 & -1\\
    0 & 1 & 0
    \end{pmatrix}, \quad x\in (-\delta , 0)=\Delta_{2} \cap B_\delta , \\
j_{\widetilde{Q}}(z)&=
    I + e^{\pm \alpha \pi i} E_{2 3}, \quad z\in\Delta_{2}^{\pm} \cap B_\delta ,
\end{align*}}
where we have used the last identity in \eqref{jumplamb}.
\item
For $-1<\alpha<0$, $\widetilde{Q}(z)$ behaves near the origin like:
\begin{equation}\label{P01tilde}
\widetilde Q(z)=\mathcal{O}\begin{pmatrix}
            1 & |z|^{\alpha/2} & |z|^{\alpha/2} \\
            1 & |z|^{\alpha/2} & |z|^{\alpha/2} \\
            1 & |z|^{\alpha/2} & |z|^{\alpha/2}
            \end{pmatrix}, \quad \text{as }z\to 0.
\end{equation}
For $\alpha=0$, $\widetilde Q(z)$ behaves near the origin like:
\begin{equation} \label{P02tilde}
\widetilde Q(z)=
\mathcal{O}\begin{pmatrix}
            1 & \log |z| & \log |z| \\
            1 & \log |z| & \log |z| \\
            1 & \log |z| & \log |z|
            \end{pmatrix}, \quad \text{as }z\to 0.
\end{equation}
For $ \alpha>0$, $\widetilde Q(z)$ behaves near the origin like:
\begin{equation} \label{P03tilde}
\widetilde Q(z)=\mathcal{O}\begin{pmatrix}
             1 & |z|^{\alpha/2} & |z|^{-\alpha/2} \\
             1 & |z|^{\alpha/2} & |z|^{-\alpha/2} \\
             1 & |z|^{\alpha/2} & |z|^{-\alpha/2}
            \end{pmatrix}, \quad \text{as }z\to 0\text{ outside $\Delta_2^\pm$},
\end{equation}
and
\begin{equation} \label{P03bistilde}
\widetilde Q(z)=\mathcal{O}\begin{pmatrix}
            1 & |z|^{-\alpha/2} & |z|^{-\alpha/2} \\
            1 & |z|^{-\alpha/2} & |z|^{-\alpha/2} \\
            1 & |z|^{-\alpha/2} & |z|^{-\alpha/2}
            \end{pmatrix}, \quad \text{as }z\to 0 \text{ inside $\Delta_2^\pm$}.
\end{equation}
\end{enumerate}
Although we have different expressions for $\widetilde{Q}$ for
the cases $n$ even and $n$ odd, there is no distinction between these
two cases in the conditions on $\widetilde{Q}$.

The problem for $\widetilde{Q}$ has a solution in terms of the modified Bessel functions of order $\alpha $
see \cite[Section 6]{KMVAV}.
Namely, with the modified Bessel functions $I_{\alpha}$ and $K_{\alpha}$, and the
Hankel functions $H_{\alpha}^{(1)}$ and $H_{\alpha}^{(2)}$ (see \cite[Chapter 9]{AS}), we define a
$2 \times 2 $ matrix $\Psi(\zeta)$ for $|\arg \zeta| < 2 \pi/3$ as
\begin{equation}\label{RHPPSIsolution1}
    \Psi(\zeta) =
    \begin{pmatrix}
        I_{\alpha} (2 \zeta^{1/2}) & \frac{i}{\pi} K_{\alpha}(2 \zeta^{1/2}) \\[1ex]
        2\pi i \zeta^{1/2} I_{\alpha}'(2\zeta^{1/2}) & -2 \zeta^{1/2} K_{\alpha}'(2\zeta^{1/2})
    \end{pmatrix}.
\end{equation}
For $2\pi/3 < \arg \zeta < \pi$ we define it as
\begin{equation}\label{RHPPSIsolution2}
    \Psi(\zeta) =
    \begin{pmatrix}
        \frac{1}{2} H_{\alpha}^{(1)}(2(-\zeta)^{1/2}) &
        \frac{1}{2} H_{\alpha}^{(2)}(2(-\zeta)^{1/2}) \\[1ex]
        \pi \zeta^{1/2} \left(H_{\alpha}^{(1)}\right)'(2(-\zeta)^{1/2}) &
        \pi \zeta^{1/2} \left(H_{\alpha}^{(2)}\right)'(2(-\zeta)^{1/2})
    \end{pmatrix} e^{\frac{1}{2}\alpha\pi i \sigma_3 }.
\end{equation}
And finally for $- \pi < \arg \zeta  < -2\pi/3$ it is defined as
\begin{equation}\label{RHPPSIsolution3}
    \Psi(\zeta) =
    \begin{pmatrix}
        \frac{1}{2} H_{\alpha}^{(2)}(2(-\zeta)^{1/2}) &
        -\frac{1}{2} H_{\alpha}^{(1)}(2 (-\zeta)^{1/2}) \\[1ex]
        -\pi \zeta^{1/2} \left(H_{\alpha}^{(2)}\right)'(2 (- \zeta)^{1/2}) &
        \pi \zeta^{1/2} \left(H_{\alpha}^{(1)}\right)'(2 (-\zeta)^{1/2})
    \end{pmatrix}
    e^{-\frac{1}{2} \alpha \pi i \sigma_3}.
\end{equation}

Then we define a $3\times 3$ matrix $\widetilde{\Psi}$, given in block form by
\begin{equation} \label{PsiTilde}
    \widetilde{\Psi}(\zeta ) =  \left(\begin{MAT}{c|c}
  1 & 0 \\-
  0 & \sigma_1 \Psi(\zeta) \sigma_1 \\
\end{MAT}
\right)\,, \qquad \sigma_1 = \begin{pmatrix} 0 & 1 \\ 1 & 0 \end{pmatrix}.
\end{equation}
[The conjugation by $\sigma_1$ is needed to interchange the second and third
rows and columns.]
The function
\begin{align*} 
    f(z) =
    \left[\frac{1}{2}(\lambda_2-\lambda_3)(z)\right]^{2} =
    \left[\frac{1}{2}\, \int_0^z (\zeta_2-\zeta_3)(s)\, ds \right]^{2}
\end{align*}
can be continued analytically from $B_\delta \setminus (-\delta, 0]$ to the full neighborhood $B_\delta $,
giving a biholomorphic (conformal) homeomorphism of a neighborhood of the origin onto itself
(see \eqref{zeta-origin1}) such that $f(x)$ is real and positive
for $x \in (0, \delta )$. Again, we may deform the contours $\Delta_2^\pm$ near $0$ in such a way
that $f$ maps $\Delta_2^\pm \cap B_\delta $ to the rays with angles
$\frac{2\pi}{3}$ and $-\frac{2\pi}{3}$, respectively. It follows from \cite{KMVAV} that
for any analytic prefactor $E$, we have that
\[
    \widetilde Q(z) = E(z) \widetilde{\Psi}(n^2 f(z) )
\]
satisfies the conditions (1), (2), (3) needed for $\widetilde{Q}$.
So we complete the construction of $Q$ by defining
\begin{equation} \label{P0def}
Q (z)= E(z)  \widetilde{\Psi}(n^2 f(z) )
    \diag \left ( 1,\;  z^{\alpha /2} e^{-\frac{n}{2}(\lambda _2-\lambda _3)(z)}, \;
     z^{-\alpha /2} e^{\frac{n}{2}(\lambda _2-\lambda _3)(z)}  \right )\,,
\end{equation}
where $E$, analytic in $B_\delta $, is chosen to satisfy the matching condition on $\partial B_\delta $.
Using again the results of \cite{KMVAV}, and taking into account that
we have to interchange the second and third rows and columns, we define
\begin{multline}
    E(z)
      =  N_{\alpha}(z) \diag \left ( 1,\;    z^{ -\alpha /2} , \;   z^{ \alpha /2}   \right )
      \diag \left(1, \frac{1}{\sqrt{2}}
    \begin{pmatrix}
    1 & -i \\
      -i & 1
    \end{pmatrix} \right)   \\
     \times \diag \left( 1, \; \left(2\pi n \right)^{-1/2} f(z)^{-1/4}, \;  \left(2\pi n \right)^{1/2} \, f(z)^{1/4}
    \right)  \label{defEHardEdge2}  \,.
\end{multline}
Here the branch of $f ^{1/4}(z)$ is positive for $z\in (0,  \delta )$.
Observe that $f ^{1/4}(z)=\mathcal O \left( z^{1/4} \right)$ as $z\to 0$, so by \eqref{asymptoticsPandQ3},
\[
E(z)= \mathcal O \begin{pmatrix} 1 & 1 & 1 \\
    1 & z^{-1/2} & 1 \\
    1 & z^{-1/2} & 1
\end{pmatrix} \quad \textrm{as } z \to 0.
\]
It is easy to check that
\[
E_+(x)= E_-(x) \, \begin{pmatrix}
    1 &   0 & 0 \\
     0 &  i (f_+/f_-) ^{-1/4}(z)   &   0  \\
      0 &  0 &     -i (f_+/f_-) ^{1/4}(z)
    \end{pmatrix}\,, \quad x\in (-\delta , 0)\,.
\]
Since $f_+ ^{1/4}(x)= i f_- ^{1/4}(x)$ for $x\in (-\delta , 0)$ and $E$ cannot have a pole at the
origin, we conclude that $E$ is analytic in $B_\delta $.

Finally, the matching condition \eqref{P0match} in condition (4) of the
RH problem for $Q$ is satisfied by results of \cite{KMVAV}. We have thus established
the following.
\begin{proposition} \label{prop:PhardCase1}
The matrix-valued function $Q$ defined by \eqref{P0def}, \eqref{defEHardEdge2},
with $\widetilde{\Psi}$ as in \eqref{PsiTilde}
satisfies the conditions 1.--4. of the RH problem for $Q$.
\end{proposition}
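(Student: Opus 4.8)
The plan is to carry out the chain of reductions that the surrounding text sets up: first transfer the RH problem for $Q$ to one for the conjugated matrix $\widetilde Q$ of \eqref{tildeQdefCase1}, in which every jump is \emph{constant}; then solve the $\widetilde Q$-problem with the Bessel model matrix $\widetilde\Psi$ of \eqref{PsiTilde} pulled back through the conformal map $f$; and finally verify that the explicit prefactor $E$ of \eqref{defEHardEdge2} is analytic in $B_\delta$ and delivers the matching \eqref{P0match}. The RH problem for $Q$ does not impose a jump on $(0,\delta)$, so there is nothing special to do for $-1<\alpha<0$ at this stage beyond tracking the origin behavior.

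First I would check the stated properties of $\widetilde Q$. The diagonal conjugating factor in \eqref{tildeQdefCase1} is analytic in $B_\delta\setminus(-\delta,0)$ by Lemma \ref{lem:analyticity} applied to $e^{\pm\frac{n}{2}(\lambda_2-\lambda_3)}$, together with the principal branch of $z^{\pm\alpha/2}$. Computing $\widetilde Q_+\widetilde Q_-^{-1}$ from the jumps of $Q$ in item 2 of its RH problem and the relations $\lambda_{2-}=\lambda_{3+}$, $\lambda_{2+}=\lambda_{3-}-2\pi i$ on $\Delta_2$ from the last line of \eqref{jumplamb}, the exponential factors absorb the powers $|z|^{\pm\alpha}$ in $j_T$, leaving the constant jumps $\left(\begin{smallmatrix}1&0&0\\0&0&-1\\0&1&0\end{smallmatrix}\right)$ on $\Delta_2\cap B_\delta$ and $I+e^{\pm\alpha\pi i}E_{23}$ on $\Delta_2^\pm\cap B_\delta$. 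The behavior of $\widetilde Q$ at the origin in \eqref{P01tilde}--\eqref{P03bistilde} follows from that of $Q$ — which is the same as that of $T$ in Proposition \ref{prop:RHforT} — multiplied by the $|z|^{\pm\alpha/2}$ factors.

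Next I would recall from \cite[Section 6]{KMVAV} that the $2\times2$ matrix $\Psi$ of \eqref{RHPPSIsolution1}--\eqref{RHPPSIsolution3}, built from $I_\alpha$, $K_\alpha$ and the Hankel functions, solves the standard Bessel model RH problem on the rays $\arg\zeta=\pm2\pi/3$ and the negative real axis; conjugating by $\sigma_1$ and adjoining a trivial first row and column as in \eqref{PsiTilde} (this interchange is needed because the active block of $\widetilde Q$ is the lower right $2\times2$ block) produces a $\widetilde\Psi$ whose jumps are precisely the constant jumps required of $\widetilde Q$. Since $f(z)=\left[\tfrac{1}{2}\int_0^z(\zeta_2-\zeta_3)(s)\,ds\right]^2$ is, by \eqref{zeta-origin1}, a conformal map of a neighborhood of $0$ onto itself with $f(0)=0$ and $f>0$ on $(0,\delta)$, after deforming $\Delta_2^\pm$ so that $f$ sends them to the rays $\arg\zeta=\pm2\pi/3$ the composition $\widetilde\Psi(n^2f(z))$ has exactly the right jumps; the small-argument behavior of the Bessel and Hankel functions (\cite[9.6.7--9.6.9]{AS}) gives the origin behavior \eqref{P01tilde}--\eqref{P03bistilde}. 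Hence $E(z)\widetilde\Psi(n^2f(z))$ satisfies conditions 1--3 for $\widetilde Q$ for every $E$ analytic in $B_\delta$.

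The step I expect to be the main obstacle is proving that the $E$ of \eqref{defEHardEdge2} is analytic in all of $B_\delta$. It is manifestly analytic off $(-\delta,0)$; across $(-\delta,0)$ one multiplies the jump of $N_\alpha$ given by \eqref{eq:Njump2}, the jump of $\diag(1,z^{-\alpha/2},z^{\alpha/2})$, the trivial jump of the constant rotation $\tfrac{1}{\sqrt{2}}\left(\begin{smallmatrix}1&-i\\-i&1\end{smallmatrix}\right)$, and the jump of $\diag(1,f^{-1/4},f^{1/4})$, and — using $f_+^{1/4}(x)=if_-^{1/4}(x)$ on $(-\delta,0)$ — the product collapses to the identity, so $E$ extends analytically across $(-\delta,0)$. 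It then remains to exclude a singularity at $0$: the local bound \eqref{asymptoticsPandQ3} for $N_\alpha$ together with $f^{1/4}(z)=\mathcal O(z^{1/4})$ gives $E(z)=\mathcal O\!\left(\begin{smallmatrix}1&1&1\\1&z^{-1/2}&1\\1&z^{-1/2}&1\end{smallmatrix}\right)$ as $z\to0$, an $o(z^{-1})$ bound entrywise, so the isolated singularity is removable and $E$ is analytic in $B_\delta$. Finally, for $z\in\partial B_\delta$ the argument $n^2f(z)$ grows with $n$, so the large-argument asymptotics of the Bessel and Hankel functions (\cite[Section 9.7]{AS}) give the standard asymptotic form of $\widetilde\Psi(n^2f(z))$; substituting it into \eqref{P0def} with $E$ as in \eqref{defEHardEdge2} produces $Q(z)N_\alpha(z)^{-1}=I+\mathcal O(1/n)$ uniformly on $\partial B_\delta$, exactly as in \cite{KMVAV}, which is condition 4. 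Collecting these verifications establishes conditions 1--4 for $Q$.
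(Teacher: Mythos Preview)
Your proposal is correct and follows essentially the same route as the paper: the paper's argument, laid out in the text preceding the proposition, is exactly the reduction to $\widetilde Q$ via \eqref{tildeQdefCase1}, the solution by $\widetilde\Psi(n^2 f(z))$ from \cite{KMVAV}, the verification that $E$ is analytic in $B_\delta$ by checking that its jump on $(-\delta,0)$ collapses (using $f_+^{1/4}=if_-^{1/4}$) and that the possible singularity at $0$ is removable from the $\mathcal O$-bound coming from \eqref{asymptoticsPandQ3}, and the matching on $\partial B_\delta$ via the large-argument Bessel asymptotics. Your write-up is in fact a bit more explicit than the paper's in spelling out the use of \eqref{jumplamb} for the $\widetilde Q$-jumps and the entrywise $o(z^{-1})$ bound for removability.
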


Taking into account \eqref{det1} and that $\det \widetilde{\Psi}=1$ (see \cite{KMVAV}) we
also conclude that
\begin{equation}\label{det2}
    \det Q(z)\equiv 1\,, \quad z\in B_{\delta}.
\end{equation}
If we would take $Q$ as the local parametrix for $T$, we would define
the final transformation as
\[ R(z) = T(z) Q(z)^{-1}, \quad z \in B_{\delta}. \]
Then $R$ would be analytic in $B_{\delta} \setminus (0,\infty)$
with  the following jump for  $x \in (0,\delta)$,
\begin{align} \nonumber
    R_-(x)^{-1} R_+(x) & =
    Q(x) T_-(x)^{-1} T_+(x) Q(x)^{-1} \\
    \nonumber
    & = Q(x) \left( I + x^{\alpha} e^{n(\lambda_1-\lambda_2)(x)} E_{12} \right) Q(x)^{-1} \\
    & = I + x^{\alpha} e^{n(\lambda_1-\lambda_2)(x)} Q(x) E_{12} Q(x)^{-1}.
    \label{Rjump}
    \end{align}

\begin{lemma} \label{lem:P0bounded}
For $\alpha \geq 0$, the matrix $Q(x) E_{12} Q(x)^{-1}$
is bounded as $x \to 0$, $x > 0$.
\end{lemma}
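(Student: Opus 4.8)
The plan is to read off $Q(x)E_{12}Q(x)^{-1}$ from the explicit formula \eqref{P0def} and to verify that the only potentially unbounded factor, $z^{-\alpha/2}$, is cancelled by the vanishing of the modified Bessel function $I_\alpha$ at the origin.

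First I would write \eqref{P0def} as $Q(z) = E(z)\,\widetilde\Psi\!\left(n^{2}f(z)\right) D(z)$ with
\[
D(z) = \diag\!\left(1,\ z^{\alpha/2}e^{-\frac{n}{2}(\lambda_2-\lambda_3)(z)},\ z^{-\alpha/2}e^{\frac{n}{2}(\lambda_2-\lambda_3)(z)}\right),
\]
and conjugate $E_{12}$ through the three factors in turn. Conjugation by the diagonal matrix $D$ just multiplies $E_{12}$ by $(D)_{11}(D^{-1})_{22}=z^{-\alpha/2}e^{\frac{n}{2}(\lambda_2-\lambda_3)(z)}$. For the middle factor I would use that $\widetilde\Psi$ in \eqref{PsiTilde} is block diagonal with $(1,1)$-entry $1$ and has $\det\widetilde\Psi = \det\Psi = 1$ (see \cite{KMVAV}): then $\widetilde\Psi E_{12} = E_{12}$, the second row of $\widetilde\Psi^{-1}$ is $\left(0,\ \Psi_{11},\ -\Psi_{21}\right)$, and hence $\widetilde\Psi E_{12}\widetilde\Psi^{-1} = \Psi_{11}E_{12} - \Psi_{21}E_{13}$. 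Reading $\Psi_{11}$ and $\Psi_{21}$ off from \eqref{RHPPSIsolution1} (which is the relevant branch, since $\zeta = n^{2}f(x)$ lies on the positive axis for $x>0$), this becomes $I_\alpha(2\zeta^{1/2})E_{12} - 2\pi i\,\zeta^{1/2}I_\alpha'(2\zeta^{1/2})E_{13}$. Putting the three conjugations together,
\[
Q(x)E_{12}Q(x)^{-1} = x^{-\alpha/2}e^{\frac{n}{2}(\lambda_2-\lambda_3)(x)}\Bigl(I_\alpha(2\zeta^{1/2})\,E(x)E_{12}E(x)^{-1} - 2\pi i\,\zeta^{1/2}I_\alpha'(2\zeta^{1/2})\,E(x)E_{13}E(x)^{-1}\Bigr),
\]
with $\zeta = n^{2}f(x)$.

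Then I would collect the facts that finish the estimate: (i) $E$ is analytic in $B_\delta$ (shown just above) and $\det E \equiv 1$ (by \eqref{det1} and a direct computation of the determinants in \eqref{defEHardEdge2}), so $E\,E_{12}E^{-1}$ and $E\,E_{13}E^{-1}$ are analytic, hence bounded, near $0$; (ii) the conformal map $f$ satisfies $f(0)=0$, so $\zeta = n^{2}f(x)=\mathcal{O}(x)$, $\zeta^{1/2}=\mathcal{O}(x^{1/2})$, $\zeta^{\alpha/2}=\mathcal{O}(x^{\alpha/2})$ as $x\to 0^{+}$, while $f(x)/x$ tends to a positive constant, and $e^{\frac{n}{2}(\lambda_2-\lambda_3)(x)}$ is bounded because $(\lambda_2-\lambda_3)(0)=0$ (see \eqref{zeta-origin1}); (iii) for $\alpha\geq 0$ the series \eqref{Ialpha} gives $I_\alpha(2\zeta^{1/2}) = \zeta^{\alpha/2}\bigl(1/\Gamma(\alpha+1)+\mathcal{O}(\zeta)\bigr)$ and $\zeta^{1/2}I_\alpha'(2\zeta^{1/2})=\mathcal{O}(\zeta^{\alpha/2})$, both $\mathcal{O}(x^{\alpha/2})$. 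Multiplying by the prefactor $x^{-\alpha/2}e^{\frac{n}{2}(\lambda_2-\lambda_3)(x)}=\mathcal{O}(x^{-\alpha/2})$ shows the product is $\mathcal{O}(1)$ as $x\to 0$, $x>0$, which is the claim.

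There is no real difficulty beyond bookkeeping; the one point requiring care is the matching of branches, namely that $x^{-\alpha/2}$ and $\zeta^{\alpha/2}=(n^{2}f(x))^{\alpha/2}$ are both the principal branch for $x>0$, so that $x^{-\alpha/2}\zeta^{\alpha/2}=n^{\alpha}(f(x)/x)^{\alpha/2}$ is bounded and bounded away from $0$. The hypothesis $\alpha\geq 0$ enters exactly in step (iii): $I_\alpha$ vanishes at the origin like $\zeta^{\alpha/2}$, which is what absorbs the $x^{-\alpha/2}$. For $-1<\alpha<0$ this mechanism fails and, moreover, one cannot simply discard the jump of $T$ on $(0,\delta)$ when building the parametrix, since the factor $x^{\alpha}$ appearing there is itself unbounded — this is why that range is treated separately in the remainder of the section.
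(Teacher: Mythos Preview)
Your proof is correct. The conjugation of $E_{12}$ through the three factors of $Q$ is carried out accurately (in particular the identification of the second row of $\widetilde\Psi^{-1}$ as $(0,\Psi_{11},-\Psi_{21})$ is right, given $\det\Psi=1$ and the $\sigma_1$-conjugation in \eqref{PsiTilde}), and the cancellation of $x^{-\alpha/2}$ against the $\zeta^{\alpha/2}$-vanishing of $I_\alpha(2\zeta^{1/2})$ and $\zeta^{1/2}I_\alpha'(2\zeta^{1/2})$ is exactly the right mechanism.

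Your route differs from the paper's. For $\alpha>0$ the paper argues softly: by \eqref{P03bis} and $\det Q\equiv 1$, both $Q(x)$ and $Q(x)^{-1}$ are bounded as $x\to 0^+$ outside the lens, and the lemma is immediate. This short argument fails for $\alpha=0$ because \eqref{P02} only gives an $\mathcal O(\log|x|)$ bound, so the paper then inspects the structure of $\widetilde\Psi^{-1}$ near $\zeta=0$ to see that the \emph{second row} of $Q^{-1}$ is in fact bounded, and writes $QE_{12}Q^{-1}$ as the rank-one product of the first column of $Q$ (bounded) and that second row of $Q^{-1}$. Your approach instead computes $QE_{12}Q^{-1}$ explicitly and reads off the Bessel-function vanishing directly; this treats all $\alpha\ge 0$ uniformly and makes transparent why the argument breaks for $-1<\alpha<0$ (no vanishing of $I_\alpha$ at the origin), at the cost of a bit more algebra than the paper's two-line argument for $\alpha>0$.
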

\begin{proof}
If $\alpha > 0$, then it follows from \eqref{P03bis} and \eqref{det2}
that both $Q(x)$ and $Q(x)^{-1}$ are bounded as $x \to 0$, $x > 0$,
and the lemma follows.

For $\alpha = 0$, the above argument, now based on \eqref{P02}
instead of \eqref{P03bis}, does not work, since it would lead
to a bound  $\mathcal O(\log|x|)$ as $x \to 0$. To prove the lemma
for $\alpha = 0$, we look at the precise construction of $Q$.
From \eqref{RHPPSIsolution1} and the known behavior of $I_0(\zeta)$
and $K_0(\zeta)$ as $\zeta \to 0$, we obtain
\[ \Psi(\zeta) = \mathcal O\begin{pmatrix} 1 & \log |\zeta| \\ 1 & \log |\zeta|
    \end{pmatrix} \]
Since $\det \Psi(\zeta) = 1$, it then follows by \eqref{PsiTilde}
that
\[ \widetilde{\Psi}(\zeta)^{-1} = \begin{pmatrix} 1 & 0 & 0 \\
    0 & \mathcal{O}(1) & \mathcal{O}(1) \\
    0 & \mathcal{O}(\log |\zeta|) & \mathcal{O}(\log |\zeta|) \end{pmatrix}
    \qquad \text{ as } \zeta \to 0. \]
Using this in \eqref{P0def} we obtain
\begin{equation} \label{lemPinverse}
    Q(x)^{-1} = \begin{pmatrix} 1 & 0 & 0 \\
    0 & \mathcal{O}(1) & \mathcal{O}(1) \\
    0 & \mathcal{O}(\log |x|) & \mathcal{O}(\log |x|) \end{pmatrix} E^{-1}(x),
    \quad \text{as } x \to 0, x > 0,
    \end{equation}
where $E^{-1}(x)$ is bounded near $x=0$.
Since
\[ Q(x) E_{12} Q(x)^{-1} = Q(x) \begin{pmatrix} 1 \\ 0 \\ 0 \end{pmatrix}
    \begin{pmatrix} 0 & 1 & 0 \end{pmatrix} Q(x)^{-1} \]
and $Q(x) \begin{pmatrix} 1 \\ 0 \\ 0 \end{pmatrix}$ is
bounded by \eqref{P02} and
$ \begin{pmatrix} 0 & 1 & 0 \end{pmatrix} Q(x)^{-1}$ is bounded
by \eqref{lemPinverse}, the lemma follows for $\alpha = 0$ as well.
\end{proof}

From Lemma \ref{lem:P0bounded}
and the fact that $\Re (\lambda_1 - \lambda_2) < -c < 0$, for some $c >0$,
it follows that the jump matrix \eqref{Rjump} is exponentially close to the
identity matrix as $n \to \infty$, uniformly for $x \in (0,\delta)$,
in case $\alpha \geq 0$. We take the parametrix $P = Q$ in case $\alpha \geq 0$.

This does not work if $\alpha < 0$, since then we would get that
$Q(x) E_{12} Q(x)^{-1}$
is of order $x^{\alpha}$ as $x \to 0$. Then for any fixed $x > 0$, the jump matrix
is close to the identity matrix as $n \to \infty$, but it
is not valid uniformly for $x \in (0,\delta)$.


\subsubsection{Second part of the construction, for $-1 < \alpha < 0$}
Let us analyze now the case when $-1<\alpha <0$.
Now we cannot simply ignore the jump matrix of $T$ on
$(0, \delta )$, so we will try to match all four jumps.
Namely, we build a $3 \times 3$
matrix valued function $P$ such that
\begin{enumerate}
\item[1.] $P$ is analytic in $B_\delta \setminus \left(\Delta_2 \cup \Delta_2^\pm \cup (0,\delta) \right)$.
\item[2.] $P$ has a
jump $P_+(z)=P_-(z)\, j_{T} (z)$ on each of the oriented contours shown in
the right picture of Figure \ref{fig:parametrix2}.  The jump matrices are given by
{\allowdisplaybreaks
\begin{align*}
j_{T}(x)&=
    \begin{pmatrix}
    1 & 0                  & 0 \\
0 & 0 & -|x|^{-\alpha}\\
0 & |x|^{\alpha} & 0
    \end{pmatrix}, \quad x\in (-\delta , 0)=\Delta_{2} \cap B_\delta , \\
j_{T}(z)&=
    I + e^{\pm \alpha \pi i} z^{-\alpha}e^{n (\lambda_{2}-\lambda_{3})(z)} E_{2 3},
    \quad z\in\Delta_{2}^{\pm} \cap B_\delta , \\
j_{T} (x)& =
     I+  x^\alpha e^{n (\lambda_{1}-\lambda_{2}) (x)} E_{12} , \quad x\in (0,\delta ).
\end{align*}}
\item[3.]
$P(z)$ behaves near the origin like:
\begin{equation}\label{Q01}
P(z)=\mathcal{O}\begin{pmatrix}
            1 & |z|^{\alpha} & 1 \\
            1 & |z|^{\alpha} & 1 \\
            1 & |z|^{\alpha} & 1
            \end{pmatrix}, \quad \text{as }z\to 0.
\end{equation}
\item[4.] As $n\to \infty$,
\begin{equation} \label{Q0match}
P(z)=N_{\alpha}(z) (I+\mathcal{O}(1/n)) \quad \text{uniformly for
$z \in \partial B_\delta \setminus \left(\Delta_2 \cup \Delta_2^\pm \cup (0,\delta) \right)$,}
\end{equation}
where $N_{\alpha}$ is the parametrix built in Section \ref{section6}.
\end{enumerate}

We use the matrix-valued function $Q$ given by formulas \eqref{P0def}
and \eqref{defEHardEdge2}, that worked as a parametrix for the case $\alpha \geq 0$.
We take $P$ in the form
\begin{equation}\label{defQ1}
 P(z) = Q(z) S(z),
\end{equation}
where $S$ is given in the four components of $B_{\delta} \setminus
(\Delta_2 \cup \Delta_2^{\pm} \cup (0,\delta))$ as follows:
\begin{align} \nonumber
    S(z) & = I + \frac{1}{1-e^{2 \alpha \pi i}} z^{\alpha} e^{n(\lambda_1-\lambda_2)(z)} E_{12},
    \\ & \label{Sdef1}
    \qquad \text{for $z$ in the region bounded by $(0,\delta)$ and $\Delta_2^{+}$}, \\
    S(z) & = I + \frac{e^{2\alpha \pi i}}{1-e^{2 \alpha \pi i}} z^{\alpha}
    e^{n(\lambda_1-\lambda_2)(z)} E_{12}, \nonumber
    \\ & \label{Sdef2}
    \qquad \text{for $z$ in the region bounded by $(0,\delta)$ and $\Delta_2^{-}$}, \\
    S(z) & = I + \frac{1}{1-e^{2 \alpha \pi i}} z^{\alpha} e^{n(\lambda_1-\lambda_2)(z)} E_{12}
    - \frac{e^{\alpha \pi i}}{1-e^{2\alpha \pi i}} e^{n(\lambda_1-\lambda_3)(z)} E_{13}, \nonumber
    \\ & \label{Sdef3}
    \qquad \text{for $z$ in the region bounded by $\Delta_2$ and $\Delta_2^{+}$}, \\
    S(z) & = I + \frac{e^{2\alpha \pi i}}{1-e^{2 \alpha \pi i}} z^{\alpha} e^{n(\lambda_1-\lambda_2)(z)} E_{12}
    + \frac{e^{\alpha \pi i}}{1-e^{2\alpha \pi i}} e^{n(\lambda_1-\lambda_3)(z)} E_{13}, \nonumber
    \\ & \label{Sdef4}
    \qquad \text{for $z$ in the region bounded by $\Delta_2$ and $\Delta_2^{-}$}.
    \end{align}
This construction is actually valid for any non-integer $\alpha$.

It is a straightforward, although somewhat lengthy, calculation to show
that $P$ satisfies all the jump conditions from item 2. in the RH problem for
$P$. To check the jump on $\Delta_2 = (-\delta, 0)$
one has to keep in mind that $\lambda_{2+} = \lambda_{3-} - 2\pi i$ on
$\Delta_2$, see \eqref{jumplamb},
and that $z^{\alpha}$ is defined with a cut on $(-\infty,0]$.
The conditions 1., 3., and 4. in the RH problem for $P$ are easy to verify
from the above definitions and the corresponding conditions in the RH problem for $Q$.
For condition 4. we also need to note that $\Re (\lambda_1  -\lambda_j)(z) < - c < 0$
for $j=2,3$ and $z \in B_{\delta}$.

In order to unify notation for $\alpha \geq 0$ and $-1 < \alpha <0$, we take as the parametrix in
$B_\delta $ in the Case 1 the matrix valued function $P=QS$, where $S=I$ if $\alpha \geq 0$, and
$S$ is given by \eqref{Sdef1}--\eqref{Sdef4}, if $-1 < \alpha <0$.

\subsection{Case 2.}
The construction of the local parametrix $P$  near the origin in Case 2
follows along similar lines as the construction in Case 1.
In Case 2 the geometry of the curves in the RH problem for $T$ is
shown in the right picture of Figure \ref{fig:parametrix3}. Now
the jump matrix on $(-\delta, 0)$ is exponentially close to the
identity matrix if $n$ is large, and in the first step of the
construction we ignore the jump on $(-\delta,0)$, thereby giving
us the contours as in the left picture of Figure \ref{fig:parametrix3}.

\begin{figure}[htb]
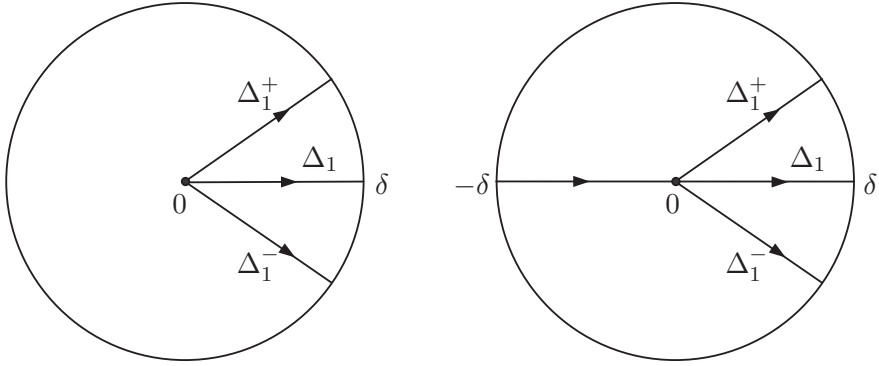

\centering \begin{overpic}[scale=1.4]%
{parametrix3}%
\put(21,20){  $ 0 $}
\put(74.5,20){  $ 0 $}
\put(51.5,22){  $   -\delta $}
\put(43,22){  $   \delta $}
\put(96,22){  $   \delta $}
\put(35,25){  $  \Delta_1 $}
\put(88,25){  $  \Delta_1 $}
\put(28,32){  $  \Delta_1^+ $}
\put(81,32){  $  \Delta_1^+ $}
\put(28,13.5){  $  \Delta_1^- $}
\put(81,13.5){  $  \Delta_1^- $}
\end{overpic}
\caption{Contours for the local parametrix around $0$ in the Case 2,
for $\alpha \geq 0$ (left picture) and for $-1< \alpha < 0$ (right picture).}
\label{fig:parametrix3}
\end{figure}

\subsubsection{Construction for $\alpha \geq 0$}
We start by constructing a solution to the following RH problem (see left picture
of Figure \ref{fig:parametrix3}).
\begin{enumerate}
\item[1.] $Q$ is analytic in $B_\delta \setminus (\Delta_1 \cup \Delta_1^\pm)$.
\item[2.] $Q$ has a
jump $Q_+(z)=Q_-(z)\, j_{T} (z)$ on each of the oriented contours shown in Figure
\ref{fig:parametrix3}. They are given by
{\allowdisplaybreaks
\begin{align*}
    j_{T}(x) &=
    \begin{pmatrix}
    0 & x^{\alpha}                  & 0 \\
    -x^{-\alpha} & 0 & 0\\
    0 & 0 & 1
    \end{pmatrix}, \quad x\in (0,\delta) =\Delta_{1} \cap B_\delta , \\
    j_{T}(z) &=
    I + z^{-\alpha}e^{n (\lambda_{2}-\lambda_{1})(z)} E_{2 1}, \quad z\in\Delta_{1}^{\pm} \cap B_\delta.
\end{align*}}
\item[3.]
For $-1<\alpha<0$, $Q(z)$ behaves near the origin like:
\begin{equation}\label{P01Case2}
Q(z)=\mathcal{O}\begin{pmatrix}
            1 & |z|^{\alpha} & 1 \\
            1 & |z|^{\alpha} & 1 \\
            1 & |z|^{\alpha} & 1
            \end{pmatrix}, \quad \text{as }z\to 0.
\end{equation}
For $\alpha=0$, $Q(z)$ behaves near the origin like:
\begin{equation} \label{P02Case2}
Q(z)=\mathcal{O}\begin{pmatrix}
             \log |z| & \log |z| & 1 \\
             \log |z| & \log |z| & 1 \\
             \log |z| & \log |z| & 1
            \end{pmatrix}, \quad \text{as }z\to 0,
\end{equation}
For $0<\alpha$, $Q(z)$ behaves near the origin like:
\begin{align} \label{P03Case2}
Q(z) & =
\mathcal{O}\begin{pmatrix}
            |z|^{-\alpha} & 1 & 1 \\
            |z|^{-\alpha} & 1 & 1 \\
            |z|^{-\alpha} & 1 & 1
            \end{pmatrix}, \quad \text{as }z\to 0 \text{ in the lens around $\Delta_1$}, \\
 \label{P03bisCase2}
Q(z) &=\mathcal{O}\begin{pmatrix}
             1 & 1 & 1 \\
             1 & 1 & 1 \\
             1 & 1 & 1
            \end{pmatrix}, \quad \text{as }z\to 0\text{ outside the lens}.
\end{align}
\item[4.] As $n\to \infty$,
\begin{equation} \label{P0matchCase2}
Q(z)=N_{\alpha}(z) (I+\mathcal{O}(1/n)) \quad \text{uniformly for
$z \in \partial B_\delta \setminus (\Delta_1 \cup \Delta_1^\pm)$,}
\end{equation}
where $N_{\alpha}$ is the parametrix built in Section \ref{section6}.
\end{enumerate}

With $\Psi$ built in \eqref{RHPPSIsolution1}--\eqref{RHPPSIsolution3} we define
a $3 \times 3$ matrix-valued function
\begin{equation} \label{Psihat}
    \widehat{\Psi}(\zeta ) =  \left(\begin{MAT}{c|c}
    \sigma_3 \Psi(-\zeta) \sigma_3 & 0 \\-
    0 & 1 \\
\end{MAT}
\right)\,, \qquad \sigma_3 = \begin{pmatrix} 1 & 0 \\ 0 & -1 \end{pmatrix},
\end{equation}
where now $\Psi$ is in the upper left block, and
\begin{multline}\label{P0defCase2}
Q (z)=E(z) \, \widehat{\Psi}(n^2 f(z) )
\diag \left((\pm 1)^n (-z)^{- \alpha /2} e^{\frac{n}{2}(\lambda _2-\lambda _1)(z)}, \;
    (\pm 1)^n (-z)^{\alpha /2} e^{-\frac{n}{2}(\lambda _2-\lambda _1)(z)}, \; 1  \right),
\end{multline}
for $\pm \Im z > 0$,
where $(-z)^{\alpha /2}$ is positive for $z \in (-\delta, 0)$ and is defined with a cut
on $(0,+\infty)$.
Here $f$ is the conformal map
\begin{equation} \label{fdefCase2}
f(z)= \left[\frac{1}{2}(\lambda_2-\lambda_1)(z)- \frac{1}{2}(\lambda_2-\lambda_1)(0)\right]^{2} =
\left[\frac{1}{2}\, \int_0^z (\zeta_2-\zeta_1)(s)\, ds \right]^{2},
\end{equation}
and the analytic prefactor $E$ is
\begin{multline} \label{EdefCase2}
    E(z) = N_{\alpha}(z) \diag \left((-z)^{\alpha/2}, (-z)^{-\alpha/2} , 1\right)
    \diag \left(\frac{1}{\sqrt{2}} \begin{pmatrix} 1 & i \\ i & 1 \end{pmatrix}, 1 \right) \\
    \times
     \diag \left( (2\pi n)^{1/2} f(z)^{1/4},\; (2\pi n)^{-1/2} f(z)^{-1/4}, \; 1
    \right).
\end{multline}

Then we find the following analogue of Proposition \ref{prop:PhardCase1}.
\begin{proposition} \label{prop:PhardCase2}
The matrix-valued function $Q$ defined by \eqref{P0defCase2}, \eqref{EdefCase2},
with $\widehat{\Psi}$ as in \eqref{Psihat} and $f$ as in \eqref{fdefCase2},
satisfies the conditions 1.--4. of the RH problem for $Q$.
\end{proposition}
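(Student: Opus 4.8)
The plan is to follow the scheme of the proof of Proposition~\ref{prop:PhardCase1}, now transplanted to Case~2, where the hard edge $0$ is the \emph{left} endpoint of $\Delta_1=[0,q]$ rather than an endpoint of $\Delta_2$. The reflection $\zeta\mapsto-\zeta$ and the conjugation by $\sigma_3$ in the definition \eqref{Psihat} of $\widehat{\Psi}$ are exactly what is needed to put the $2\times2$ Bessel building block of \cite{KMVAV} (whose $3\times3$ incarnation was treated in \cite{LW}) into the geometry at hand; beyond that, all the estimates are the same as in \cite{KMVAV} and in Case~1.

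First I would check that $f$ in \eqref{fdefCase2} is a genuine conformal map. Using the local behaviour \eqref{zeta-origin2} one has $\zeta_2-\zeta_1\sim 2k_1/(-z)^{1/2}$ as $z\to0$, hence $\tfrac12\int_0^z(\zeta_2-\zeta_1)(s)\,ds=-2k_1(-z)^{1/2}\bigl(1+\mathcal O(z^{1/2})\bigr)$, so $f(z)=-4k_1^2 z\bigl(1+o(1)\bigr)$; in particular $f(0)=0$ and $f'(0)\ne0$. Moreover, the segment $(-\delta,0)$ lies off $\Delta_1\cup\Delta_2$ for $\delta$ small and on $(0,\delta)\subset\Delta_1$ one has $(\zeta_2-\zeta_1)_+=-(\zeta_2-\zeta_1)_-$, so after integrating from $0$ and squaring all jumps cancel and $f$ extends holomorphically across both segments, giving a biholomorphism of $B_\delta$. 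One then reads off that $n^2f$ is negative on $(0,\delta)$ and positive on $(-\delta,0)$, so that $-n^2f$ carries $\Delta_1\cap B_\delta$ and the suitably deformed rays $\Delta_1^\pm$ onto the positive axis and the rays of argument $\pm2\pi/3$ — precisely the contour on which the jumps of the Bessel matrix $\Psi$ in \eqref{RHPPSIsolution1}--\eqref{RHPPSIsolution3} are prescribed.

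Next I would verify the jump conditions in item~2 of the RH problem for $Q$. Feeding the jump relations of $\Psi$ (as recorded in \cite{KMVAV}) through the reflection and the $\sigma_3$-conjugation, the matrix $\widehat{\Psi}(n^2f(z))$ has the constant jumps $\bigl(\begin{smallmatrix}0&1\\-1&0\end{smallmatrix}\bigr)\oplus1$ on $(0,\delta)$ and $I+E_{21}$ on $\Delta_1^\pm$; conjugating by the diagonal factor in \eqref{P0defCase2} inserts the powers $(-z)^{\pm\alpha/2}$ and the exponentials $e^{\pm\frac n2(\lambda_2-\lambda_1)(z)}$, and using $\lambda_{1\pm}=\lambda_{2\mp}$ on $\Delta_1$ from \eqref{jumplamb} one recovers exactly the matrices $j_T$ listed in item~2 (the $(\pm1)^n$ factor is constant across the real axis away from $0$, hence harmless). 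The behaviour near the origin in item~3, in all three ranges of $\alpha$, follows from the small-argument expansions of $I_\alpha,K_\alpha,H^{(1)}_\alpha,H^{(2)}_\alpha$ (formulas 9.6.7--9.6.9 of \cite{AS}), which give the growth of $\widehat{\Psi}(\zeta)$ as $\zeta\to0$, together with the $\mathcal O(|z|^{\pm\alpha/2})$ from the diagonal factor and the boundedness of $E$ near $0$; note that the construction \eqref{P0defCase2} makes sense for every non-integer $\alpha$.

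The remaining — and most delicate — point is to show that $E$ in \eqref{EdefCase2} is analytic in all of $B_\delta$ and that the matching condition~4 holds. Off $(-\delta,0]\cup(0,\delta)$ analyticity is clear; across $(0,\delta)$ the jumps coming from $N_\alpha$ (see \eqref{eq:Njump1}), from $(-z)^{\pm\alpha/2}$, and from $f^{\pm1/4}$ cancel by exactly the computation following \eqref{defEHardEdge2}; and $E$ has no pole at $0$ because the mild singularity of $N_\alpha$ there, given by Lemma~\ref{lemma:localN}(b), is compensated by $f(z)^{\pm1/4}=\mathcal O(|z|^{\pm1/4})$ and the $(-z)^{\pm\alpha/2}$ factors, so $E$ is bounded near $0$ and hence analytic there. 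Finally, inserting the large-argument asymptotics of the Airy/Bessel functions into \eqref{P0defCase2} (again as in \cite{KMVAV}) yields $N_\alpha(z)^{-1}Q(z)=I+\mathcal O(1/n)$ uniformly on $\partial B_\delta$, which is \eqref{P0matchCase2}. The main obstacle throughout is the bookkeeping: keeping the branch cuts of $(-z)^{\alpha/2}$ and $f^{1/4}$, the reflection $\zeta\mapsto-\zeta$, the $\sigma_3$-conjugation, and the $2\pi i$-periodicity of $\lambda_1$ across $(-\delta,0)$ mutually consistent, so that every jump in the definition of $Q$ collapses to the prescribed $j_T$ and $E$ turns out to be single-valued and pole-free.
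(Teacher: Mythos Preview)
Your proposal is correct and takes exactly the paper's approach: the paper gives no proof at all beyond the one-line assertion that the construction is ``the following analogue of Proposition~\ref{prop:PhardCase1}'', and you have filled in the standard verification (conformality of $f$, jump check via the Bessel model of \cite{KMVAV}, behaviour at $0$, analyticity of $E$ and matching on $\partial B_\delta$) that the paper leaves implicit. One small slip to watch: you write that $-n^{2}f$ carries $\Delta_{1}\cap B_{\delta}$ onto the \emph{positive} axis and call this ``precisely the contour on which the jumps of the Bessel matrix $\Psi$\ldots\ are prescribed'', but the main jump of $\Psi$ in \eqref{RHPPSIsolution1}--\eqref{RHPPSIsolution3} lies on the \emph{negative} axis; it is the combination of the reflection $\zeta\mapsto-\zeta$ in \eqref{Psihat} with the sign of $f'(0)$ that puts the anti-diagonal jump of $\widehat{\Psi}(n^{2}f(\cdot))$ where it is needed, so you should trace this sign through carefully --- exactly the bookkeeping you flag in your last paragraph.
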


\subsubsection{Construction for $-1 < \alpha < 0$}
The above constructed $Q$ can be used  as a parametrix $P$ for $T$ in case $\alpha \geq 0$.
For $-1 < \alpha < 0$, the parametrix should also have the same jump as $T$  on $(-\delta, 0)$,
and we seek a $3 \times 3$ matrix valued function $P$ such that
\begin{enumerate}
\item[1.] $P$ is analytic in $B_\delta \setminus \left(\Delta_1 \cup \Delta_1^\pm \cup (-\delta,0) \right)$.
\item[2.] $P$ has a
jump $P_+(z)=P_-(z)\, j_{T} (z)$ on each of the oriented contours shown in Figure
\ref{fig:parametrix3}, right. They are given by
{\allowdisplaybreaks
\begin{align*}
    j_{T}(x) &=
    \begin{pmatrix}
    0 & x^{\alpha}                  & 0 \\
    -x^{-\alpha} & 0 & 0\\
    0 & 0 & 1
    \end{pmatrix}, \quad x\in (0,\delta) =\Delta_{1} \cap B_\delta , \\
    j_{T}(z) &=
    I + z^{-\alpha}e^{n (\lambda_{2}-\lambda_{1})(z)} E_{2 1}, \quad z\in\Delta_{2}^{\pm} \cap B_\delta, \\
    j_{T}(x) &= I+ |x|^{\alpha}e^{n (\lambda_{3}-\lambda_{2})  (x)} E_{32} \,, \quad x\in (-\delta, 0) \,.
\end{align*}}
\item[3.]
$P(z)$ behaves near the origin like:
\begin{equation}\label{Q01Case2}
P(z)=\mathcal{O}\begin{pmatrix}
            1 & |z|^{\alpha} & 1 \\
            1 & |z|^{\alpha} & 1 \\
            1 & |z|^{\alpha} & 1
            \end{pmatrix}, \quad \text{as }z\to 0.
\end{equation}
\item[4.] As $n\to \infty$,
\begin{equation} \label{Q0matchCase2}
P(z)=N_{\alpha}(z) (I+\mathcal{O}(1/n)) \quad \text{uniformly for $z \in \partial B_\delta \setminus
\left(\Delta_1 \cup \Delta_1^\pm \cup (-\delta, 0) \right)$,}
\end{equation}
where $N_{\alpha}$ is the parametrix built in Section \ref{section6}.
\end{enumerate}

Just as in Case 1, we build $P$ in the form \eqref{defQ1},
\begin{equation}\label{defQ1Case2}
 P(z) = Q(z) S(z),
\end{equation}
where $Q$ is the matrix valued function constructed by formulas \eqref{P0defCase2}--\eqref{EdefCase2},
and $S$ is now explicitly given in each of the four components of
$B_{\delta} \setminus (\Delta_1 \cup \Delta_1^{\pm} \cup (-\delta,0))$ by
\begin{align} \nonumber
    S(z) & = I - \frac{e^{\alpha \pi i}}{1-e^{2 \alpha \pi i}} z^{\alpha} e^{n(\lambda_3-\lambda_2)(z)} E_{32},
    \\ & \label{Sdef1Case2}
    \qquad \text{for $z$ in the region outside the lens}, \\
    S(z) & = I - \frac{e^{\alpha \pi i}}{1-e^{2 \alpha \pi i}} z^{\alpha}
    e^{n(\lambda_3-\lambda_2)(z)} E_{32}
    + \frac{e^{\alpha \pi i}}{1-e^{2\alpha \pi i}} e^{n(\lambda_3-\lambda_1)(z)} E_{31}, \nonumber
    \\ & \label{Sdef3Case2}
    \qquad \text{for $z$ in the  upper part of the lens around $\Delta_1$}, \\
    S(z) & = I - \frac{e^{\alpha \pi i}}{1-e^{2 \alpha \pi i}} z^{\alpha} e^{n(\lambda_3-\lambda_2)(z)} E_{32}
    - \frac{e^{\alpha \pi i}}{1-e^{2\alpha \pi i}} e^{n(\lambda_3-\lambda_1)(z)} E_{31}, \nonumber
    \\ & \label{Sdef4Case2}
    \qquad \text{for $z$ in the lower part of the lens around $\Delta_1$}.
    \end{align}

Then by straightforward calculations it can again be checked
that all conditions 1.--4. of the RH problem for $P$ are satisfied.

In order to unify notation for $\alpha \geq 0$ and $-1 < \alpha <0$, we take as the parametrix in
$B_\delta $ in the Case 2 the matrix valued function $P=QS$, where $S=I$ if $\alpha \geq 0$, and
$S$ is given by \eqref{Sdef1Case2}--\eqref{Sdef4Case2}, if $-1 < \alpha <0$.

\section{Final transformation}\label{section9}

We denote generically by $B_{\delta}$ the small disks around the branch points $0$, $p$ and $q$,
and by $P$ the local parametrices built in $B_{\delta}$. We define the matrix valued function $R$ as
\begin{equation} \label{Rdef}
R(z) = \begin{cases}
    T(z) P^{-1}(z), & \text{in the  neighborhoods $B_{\delta}$,   }   \\
    T(z) N_{\alpha}^{-1}(z), & \text{elsewhere.}
    \end{cases}
\end{equation}
Then $R$ is defined and analytic outside the real line, the lips $\Delta_{1}^{\pm}$
and $\Delta_{2}^{\pm}$ of the lenses and the circles around the three branch points.
If $\alpha \geq 0$, the jump matrices of $T$ and $N_{\alpha}$ coincide on $\Delta_1$ and $\Delta_2$
and the jump matrices of $T$ and $P$ coincide inside the three disks with the
exception of the interval $(0,\delta)$ in Case 1, and $(-\delta, 0)$ in Case 2.
It follows that $R$ has an analytic continuation to the complex plane
minus the
contours shown in Figure \ref{fig:FinalContours}.

\begin{figure}[htb]
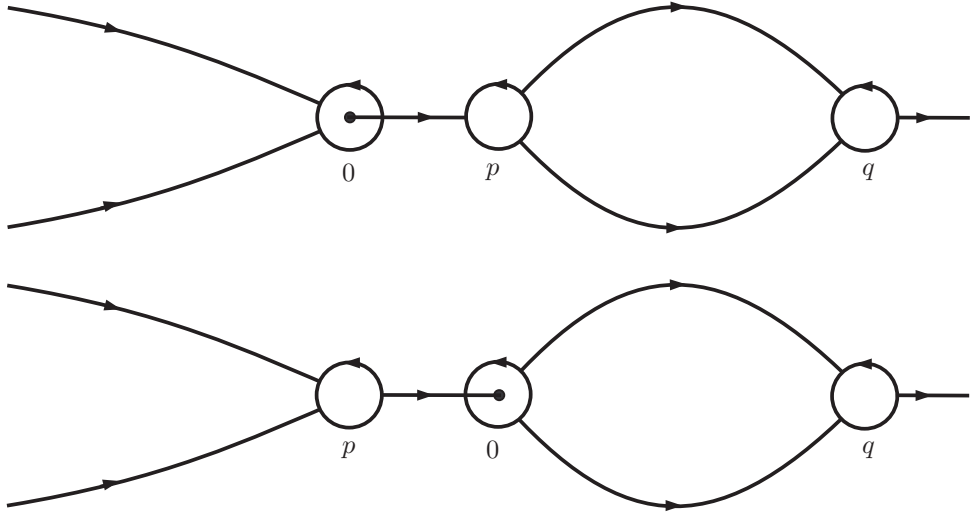

\centering \begin{overpic}[scale=1.4]%
{FinalContours}%
\put(35,39){ \small $ 0 $}
\put(49,39.5){ \small $ p $}
\put(85.5,39.5){ \small $ q $}
\put(35,12.5){ \small $ p $}
\put(49,12){ \small $ 0 $}
\put(85.5,12.5){ \small $ q $}
\end{overpic}
\caption{Jump contours for the RH problem for $R$, when $\alpha \geq 0$: Cases 1 (top) and 2 (bottom).}
\label{fig:FinalContours}
\end{figure}

We find that $R$ satisfies the following RH problem, that we describe explicitly only in the Case 1
(Case 2 is similar):
\begin{enumerate}
\item $R$ is analytic outside of the  contours in Figure \ref{fig:FinalContours}.
\item $R$ has a
jump $R_+(z)=R_-(z)\, j_{R} (z)$ on each of the oriented contours in Figure
\ref{fig:FinalContours}, with jump matrix
{\allowdisplaybreaks
\begin{align}
    j_{R} (z) & = N_{\alpha}(z)\, j_T(z) N_{\alpha}^{-1}(z),
    \quad  z \in \Delta_1^{\pm}  \cup \Delta_2^{\pm} \cup (\delta,p-\delta) \cup (q+\delta,\infty), \label{jumpR1}\\
    j_{R}(z) &= N_{\alpha}(z) P^{-1}(z),
    \quad  z \in \partial B_{\delta}, \label{jumpR2} \\
    j_R(z) & = P(z) \, j_T(z) P^{-1}(z),
    \quad z \in (0,\delta).\label{jumpR3}
    \end{align}}
\item
$R(z) = I + O(1/z)$ as $z \to \infty$.
\end{enumerate}

\begin{figure}[htb]
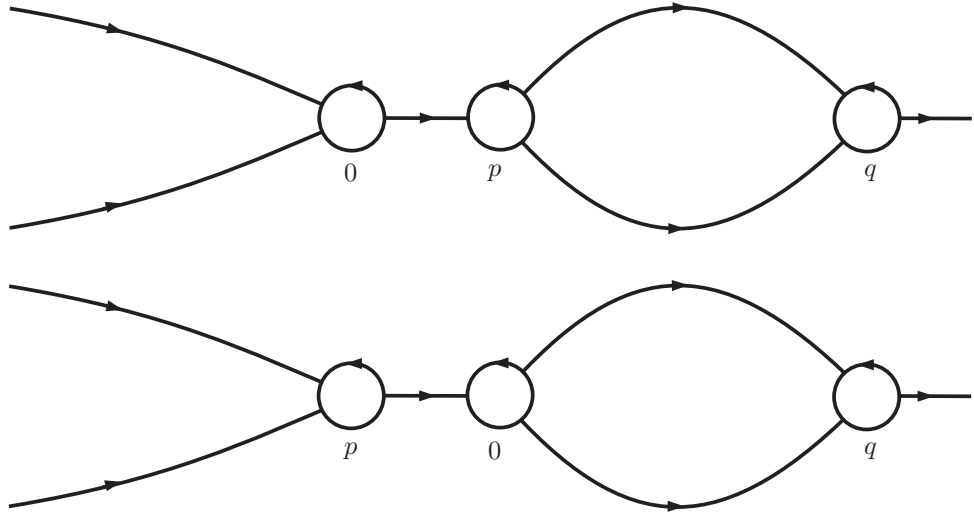

\centering \begin{overpic}[scale=1.4]%
{FinalContours2}%
\put(35,39){ \small $ 0 $}
\put(49,39.5){ \small $ p $}
\put(85.5,39.5){ \small $ q $}
\put(35,12.5){ \small $ p $}
\put(49,12){ \small $ 0 $}
\put(85.5,12.5){ \small $ q $}
\end{overpic}
\caption{Jump contours for the RH problem for $R$, when $-1<\alpha < 0$: Cases 1 (top) and 2 (bottom).}
\label{fig:FinalContours2}
\end{figure}

Note that it is only after this final transformation that the RH problem
is normalized at infinity. Item 3. follows from \eqref{Tinfty}
and \eqref{eq:Nasymptotics} and the definition \eqref{Rdef} of $R$.

If $-1<\alpha <0$, the situation is even simpler, since now $R$ has an analytic continuation to the
complex plane minus the contours shown in Figure \ref{fig:FinalContours2}, so that only
jumps \eqref{jumpR1}--\eqref{jumpR2} remain. By \eqref{Q01} and \eqref{Q01Case2},
$R(z)$ is at most $\mathcal{O}\left(|z|^\alpha \right)$ as $z\to 0$, so that the singularity
at $0$ is removable.

From the matching conditions for the local parametrices it follows that
\[ j_R(z) = I + \mathcal{O}(1/n) \qquad \textrm{ as $n \to \infty$
    uniformly for $z$ on the boundary of the disks.} \]
If $\alpha \geq 0$, for $x$ in the interval $(0,\delta)$ (in Case 1)
or $(-\delta,0)$ (in Case 2), we have for some $c > 0$,
\[ j_R(x) = I + \mathcal{O}(x^{\alpha} e^{-cn}). \]
On the remaining contours we have for some $c > 0$,
\[ j_R(z) = I + \mathcal{O}(e^{-cn|z|}) \qquad \textrm{ as $n \to \infty$} \,.\]

We can use standard arguments (see e.g.\ \cite{BK2})
to conclude that
\begin{equation} \label{RnearI}
R(z)= I + \mathcal{O} \left( \frac{1}{n (|z|+1)}\right)\,,
    \qquad n \to \infty\,,
\end{equation}
uniformly for $z$ in the complex plane outside of these contours.
Then by Cauchy's theorem also
\begin{equation} \label{Rprimenear0}
R'(z)= \mathcal{O} \left( \frac{1}{n (|z|+1)}\right)\,, \qquad n \to \infty\,.
\end{equation}
Thus, we obtain the following estimate which will be useful in the next section
\begin{equation}
\label{RinvR}
R^{-1}(y) R(x) = I + R^{-1}(y)\, (R(x)-R(y))= I + \mathcal{O} \left( \frac{x-y}{n}\right)\,.
\end{equation}

%
%

\section{Proofs of the theorems}\label{section10}

The proofs of Theorems \ref{theo:domain}--\ref{theo:local3}
are based on the asymptotic analysis of the kernel $K_n(x,y)$.
If we use \eqref{kernel} and follow the steps of the RH steepest descent analysis, we
find that for $x, y > 0$ and $x, y \in \Delta_1$,
\begin{align}
    K_n(x,y) & = \frac{1}{2\pi i(x-y)}
    \begin{pmatrix} 0 & w_1(y) & w_2(y) \end{pmatrix}
    Y_+^{-1}(y) Y_+(x) \begin{pmatrix} 1 \\ 0 \\ 0 \end{pmatrix} \nonumber \\
    & = \frac{1}{2\pi i(x-y)}
    \begin{pmatrix} 0 & y^{\alpha} e^{-n \frac{y}{t(1-t)}} & 0 \end{pmatrix}
    X_+^{-1}(y) X_+(x) \begin{pmatrix} 1 \\ 0 \\ 0 \end{pmatrix}  \nonumber \\
    & = \frac{1}{2\pi i(x-y)}
    \begin{pmatrix} 0 & y^{\alpha} e^{-n \lambda_{2,+}(y)} & 0 \end{pmatrix}
    U_+^{-1}(y) U_+(x) \begin{pmatrix} e^{n \lambda_{1,+}(x)} \\ 0 \\ 0 \end{pmatrix}  \nonumber \\
    & = \frac{1}{2\pi i(x-y)}
    \begin{pmatrix} - e^{-n\lambda_{1,+}(y)} & y^{\alpha} e^{-n \lambda_{2,+}(y)} & 0 \end{pmatrix}
    T_+^{-1}(y) T_+(x) \begin{pmatrix} e^{n \lambda_{1,+}(x)} \\ x^{-\alpha} e^{n \lambda_{2,+}(x)} \\ 0 \end{pmatrix}.
    \label{KernelAnalysis}
\end{align}
This will be our basic formula for the kernel.

\begin{proof}[Proof of Theorem \ref{theo:domain}]
We take $x$ and $y$ in the interior of $\Delta_1$, and we may assume that
the circles around the branch points are such that $x$ and $y$ lie outside
of these disks, so that
\[
T(x)= R(x) N_{\alpha}(x)\,, \qquad T(y) = R(y) N_{\alpha}(y)\,.
\]
Thus, by \eqref{RinvR}
\begin{align*}
 T_+^{-1}(y) T_+(x) & = N_{\alpha,+}^{-1}(x) R_+^{-1}(x)  R_+(y) N_{\alpha,+}(y) \\
    & = N_{\alpha,+}^{-1}(x) \left( I + \mathcal{O} \left( \frac{x-y}{n}\right) \right) N_{\alpha,+}(y) \\
    & = I + \mathcal{O} \left( x-y \right) \quad \text{as } y \to x\,,
\end{align*}
and also
\begin{align*}
 \begin{pmatrix} 1 & 0 & 0 \\ 0 & y^{\alpha} & 0 \\ 0 & 0 & 1 \end{pmatrix}
 T_+^{-1}(y) T_+(x) \begin{pmatrix} 1 & 0 & 0 \\ 0 & x^{-\alpha} & 0 \\ 0 & 0 & 1 \end{pmatrix}
 & = I + \mathcal{O} \left( x-y \right) \quad \text{as } y \to x\,.
\end{align*}

Taking into account that on $\Delta_1$ both $\lambda _1$ and $\lambda _2$ are
purely imaginary on $\Delta_1$ and $\lambda _{2+} =\overline{\lambda_{1+}}$ on
$\Delta_1$, we can rewrite \eqref{KernelAnalysis} as
\begin{align}
    K_n(x,y) & =  \frac{1}{2\pi i(x-y)}
    \begin{pmatrix} - e^{-n i \, \Im \lambda_{1,+}(y)} & e^{n i\,  \Im \lambda_{1,+}(y)} & 0 \end{pmatrix}
    \left( I + \mathcal{O} \left( x-y \right) \right) \begin{pmatrix} e^{n i\,  \Im \lambda_{1,+}(x)} \\
        e^{- n i \, \Im \lambda_{1,+}(x)} \\ 0 \end{pmatrix} \nonumber \\
    & = \frac{1}{2\pi i(x-y)} \, \left( e^{ n i\,  \Im ( \lambda_{1,+}(y) - \lambda_{1,+}(x))} -
        e^{-n i \, \Im(\lambda_{1,+}(y) - \lambda_{1,+}(x))}   + \mathcal{O} (x-y) \right) \nonumber \\
    & = \frac{\sin \left( n  \,  \Im ( \lambda_{1,+}(y) - \lambda_{1,+}(x)) \right) }{ \pi (x-y)}
      + \mathcal{O} (1) \,, \quad \text{as } y \to x\,, \label{asymptoticsK}
\end{align}
where $\mathcal{O}(1)$ holds uniformly in $n$. Now we let $y\to x$.
Using \eqref{def-lambda1} and the L'Hopital rule, we get that
\[
    K_n(x,x) = - \frac{n}{\pi}\, \Im \zeta _{1,+}(x) + \mathcal O(1)=
    \frac{n}{\pi}\,\left|  \Im \zeta _{1,+}(x)\right| + \mathcal O(1)\,, \quad n \to \infty\,,
\]
(see e.g.\ \eqref{zeta-origin2}), and
so
\[ \lim_{n \to \infty} \frac{1}{n} K_n(x,x) =
     \frac{1}{\pi} \left| \Im \zeta_{1,+}(x) \right|. \]
If $x\in \R_+\setminus \Delta_1$, then it can be proved analogously that
\[
    \lim_{n\to \infty} \frac{1}{n} K_n(x,x) = 0\,.
\]
This proves that the limiting mean density of paths exists and is
supported on $[p_+,q]$. This proves Theorem \ref{theo:domain}.
\end{proof}

\begin{proof}[Proof of Theorem \ref{theo:local1}]

Let $x^*\in (p_+(t), q(t))$, where $p_+(t)< q(t)$ are the end points of the interval $\Delta_1$,
described in Theorem \ref{theo:domain}.
Then $\rho(x^*) > 0$, where $\rho$ is the density given in \eqref{density}. For
given $x, y \in \mathbb R$, we take
\[ x_n = x^* + \frac{x}{n \rho(x^*)}, \qquad y_n = x^* + \frac{y}{n \rho(x^*)}. \]
Then for $n$ large enough, we have $x_n, y_n \in (p_+(t),q(t))$, so that \eqref{asymptoticsK} holds.
Then by Taylor expansion,
\begin{align*}
    \Im (\lambda_{1,+}(y_n) - \lambda_{1,+}(x_n)) & =  (y_n-x_n) \Im \zeta_{1,+}(x^*) + \mathcal{O}(y_n-x_n)^2 \\
    & = \frac{y-x}{n \rho(x^*)} \cdot (- \pi \rho(x^*)) + \mathcal{O}\left(\frac{1}{n^2}\right) \\
    & = \frac{\pi (x-y)}{n} + \mathcal{O}\left(\frac{1}{n^2}\right),
    \end{align*}
and therefore
\begin{align*}
    \frac{1}{n \rho(x^*)} K_n(x_n,y_n) & =
    \frac{\sin(n \Im (\lambda_{1,+}(y_n) - \lambda_{1,+}(x_n)))}{\pi(x-y)} + \mathcal{O}(1/n) \\
    & = \frac{\sin \pi(x-y)}{\pi(x-y)} + \mathcal{O}(1/n),
\end{align*}
    which proves Theorem \ref{theo:local1}.
\end{proof}

\begin{proof}[Proof of Theorem \ref{theo:local2}]
Take $c = f'(q)$ where $f$ is the conformal map from \eqref{fqdef}.
For $x, y \in \mathbb R$ we put
 $x_n = q + \frac{x}{c n^{2/3}}$ and $y_n = q + \frac{y}{cn^{2/3}}$.
 This implies that
 \[ n^{2/3} f(x_n) \to x, \qquad n^{2/3} f(y_n) \to y. \]
If $x, y < 0$, then we still can apply \eqref{KernelAnalysis}, but now,
for $n$ large enough, $x_n, y_n$ belong to the small disk $B_\delta $ around $q$, so that
\begin{align*}
 T(x_n) & = R(x_n) P(x_n) \\ & = R(x_n) E(x_n)
    \Psi\left(n^{2/3}f (x_n)\right)\diag\left(x_n^{-\alpha /2} e^{\frac{n}{2}(\lambda_2(x_n)-\lambda_1(x_n))},
      x_n^{\alpha /2} e^{-\frac{n}{2}(\lambda_2(x_n)-\lambda_1(x_n))},1\right)\,,
\end{align*}
and similarly for $T(y_n)$.
Therefore,
\begin{align*}
 T_+(x_n)\, \begin{pmatrix} e^{n \lambda_{1,+}(x_n)} \\ x_n^{-\alpha} e^{n \lambda_{2,+}(x_n)} \\ 0 \end{pmatrix}
    & = x_n^{-\alpha /2} e^{\frac{n}{2}(\lambda_{1,+}(x_n)+\lambda_{2,+}(x_n))}  R(x_n) E(x_n)
    \Psi_+\left(n^{2/3}f (x_n)\right) \, \begin{pmatrix} 1 \\ 1 \\ 0 \end{pmatrix}\,,
\end{align*}
and
\begin{multline*}
 \begin{pmatrix} - e^{-n\lambda_{1,+}(y_n)} & y_n^{\alpha} e^{-n \lambda_{2,+}(y_n)} & 0 \end{pmatrix} T_+^{-1}(y_n)
  \\ = y_n^{ \alpha /2} e^{-\frac{n}{2}(\lambda_{1,+}(y_n)+\lambda_{2,+}(y_n))}
  \begin{pmatrix} - 1 & 1 & 0 \end{pmatrix}    \Psi_+^{-1}\left(n^{2/3}f (y_n)\right)
  E^{-1}(y_n) R^{-1}(y_n) \,.
\end{multline*}
As in \cite[Section 9]{BK2}, we can show that
\[ E^{-1}(y_n) R^{-1}(y_n) R(x_n) E(x_n) \to I. \]
Thus,
\begin{align*}
    \lim_{n \to \infty} \frac{1}{cn^{2/3}} K_n(x_n,y_n)
    & = \frac{1}{2\pi i(x-y)} \begin{pmatrix} -1 & 1 & 0 \end{pmatrix}
        \Psi_+^{-1}(x) \Psi_+(y) \begin{pmatrix} 1 \\ 1 \\ 0 \end{pmatrix} \\
    & = \frac{\Ai(x) \Ai'(y) - \Ai'(x) \Ai(y)}{x-y}.
    \end{align*}

Similar calculations give the same result if $x$ and/or $y$ are positive.

The scaling limit near $p$ in case $t < t^*$ follows in a similar way.
\end{proof}

\begin{proof}[Proof of Theorem \ref{theo:local3}]
Now we assume $t > t^*$ so that we are in Case 2.
For  $x$ and $y$ are in the $\delta$-neighborhood $B_{\delta}$ of $0$,
we use the expression \eqref{KernelAnalysis} for $K_n(x,y)$
with $T = RP = RQS$, where $S = I$ in case $\alpha \geq 0$, or $S$
is given by \eqref{Sdef3Case2} in case $-1 < \alpha < 0$.
In both cases it follows that
\[
    \begin{pmatrix} e^{-n\lambda_{1,+}(y)} & y^{\alpha} e^{-n \lambda_{2,+}(y)} \end{pmatrix}   S_+^{-1}(y)
    =
    \begin{pmatrix} e^{-n\lambda_{1,+}(y)} & y^{\alpha} e^{-n \lambda_{2,+}(y)} \end{pmatrix}, \]
\[ S_+(x) \begin{pmatrix} e^{n \lambda_{1,+}(x)} \\ x^{-\alpha} e^{n \lambda_{2,+}(x)} \\ 0 \end{pmatrix}
    = \begin{pmatrix} e^{n \lambda_{1,+}(x)} \\ x^{-\alpha} e^{n \lambda_{2,+}(x)} \\ 0 \end{pmatrix}, \]
so that by \eqref{KernelAnalysis}
\begin{multline}
    K_n(x,y)  = \frac{1}{2\pi i(x-y)}
    \begin{pmatrix} - e^{-n\lambda_{1,+}(y)} & y^{\alpha} e^{-n \lambda_{2,+}(y)} & 0 \end{pmatrix}
    Q_+^{-1}(y) R_+(y)^{-1} \\
    \times R_+(x) Q_+(x) \begin{pmatrix} e^{n \lambda_{1,+}(x)} \\ x^{-\alpha} e^{n \lambda_{2,+}(x)} \\ 0 \end{pmatrix}.
    \label{KernelAnalysis2}
\end{multline}

Let now $x, y> 0$  be arbitrary. Let $c = -f'(0) > 0$ where $f$ is the
conformal map from \eqref{fdefCase2}
and take $x_n = \frac{x}{4cn^2}$, $y_n = \frac{y}{4cn^2}$
so that
\[ n^2 f(x_n) \to -x/4, \qquad n^2 f(y_n) \to -y/4 \]
as $n \to \infty$.
Then for $n$ large enough, we have that $x_n$ and $y_n$ are in the
$\delta$-neighborhood $B_{\delta}$ of $0$, so that we can use
\eqref{KernelAnalysis2} with $x$ and $y$ replaced by $x_n$ and $y_n$.
We then have
\begin{align*} R_+(x_n) Q_+(x_n)
    & = R(x_n) E(x_n) \widehat{\Psi}_+(n^2 f(x_n)) \\
    & \quad \times
        \diag \left(e^{\alpha \pi i/2} x_n^{-\alpha/2} e^{\frac{n}{2}(\lambda_{2,+}(x_n) - \lambda_{1,+}(x_n))},\;
        e^{-\alpha \pi i/2} x_n^{\alpha/2} e^{-\frac{n}{2}(\lambda_{2,+}(x_n) - \lambda_{1,+}(x_n))},\; 1 \right),
        \end{align*}
and similarly for $R_+(y_n) Q_+(y_n)$.
Thus,
\begin{align*}
 R_+(x_n) Q_+(x_n)\, \begin{pmatrix} e^{n \lambda_{1,+}(x_n)} \\ x_n^{-\alpha} e^{n \lambda_{2,+}(x_n)} \\ 0 \end{pmatrix}
    & = x_n^{-\alpha /2} e^{\frac{n}{2}(\lambda_{1,+}(x_n)+\lambda_{2,+}(x_n))}  R(x_n) E(x_n)
    \widehat{\Psi}_+\left(n^2 f (x_n)\right) \,
    \begin{pmatrix} e^{\alpha \pi i/2} \\ e^{-\alpha \pi i/2} \\ 0 \end{pmatrix}\,,
\end{align*}
and
\begin{multline*}
 \begin{pmatrix} - e^{-n\lambda_{1,+}(y_n)} & y_n^{\alpha} e^{-n \lambda_{2,+}(y_n)} & 0 \end{pmatrix}
    Q_+^{-1}(y_n) R_+^{-1}(y_n)
  \\ = y_n^{ \alpha /2} e^{-\frac{n}{2}(\lambda_{1,+}(y_n)+\lambda_{2,+}(y_n))}
  \begin{pmatrix} - e^{-\alpha \pi i/2} & e^{\alpha \pi i/2} & 0 \end{pmatrix}
  \widehat{\Psi}_+^{-1}\left(n^2 f (y_n)\right)
  E^{-1}(y_n) R^{-1}(y_n) \,.
\end{multline*}
Then it may be shown (see \eqref{RinvR} and \cite{BK2}) that
\[ E^{-1}(y_n) R^{-1}(y_n) R(x_n) E(x_n) \to I, \]
and we arrive at
\[ \lim_{n \to \infty} \frac{1}{cn^2} K_n(x_n,y_n)
    = \frac{1}{2\pi i(x-y)} \left(\frac{y}{x}\right)^{\alpha/2}
  \begin{pmatrix} - e^{-\alpha \pi i/2} & e^{\alpha \pi i/2} & 0 \end{pmatrix}
  \widehat{\Psi}_+^{-1}(y/4)
  \widehat{\Psi}_+(x/4)
  \begin{pmatrix} e^{\alpha \pi i/2} \\ e^{-\alpha \pi i/2} \\ 0 \end{pmatrix}.
\]
To evaluate this further, we first note that by definition of $\widehat{\Psi}$,
\begin{align*}
  \begin{pmatrix} - e^{-\alpha \pi i/2} & e^{\alpha \pi i/2} & 0 \end{pmatrix} &
  \widehat{\Psi}_+^{-1}(y/4)
  \widehat{\Psi}_+(x/4)
  \begin{pmatrix} e^{\alpha \pi i/2} \\ e^{-\alpha \pi i/2} \\ 0 \end{pmatrix} \\
  = &
  \begin{pmatrix} - e^{-\alpha \pi i/2} & e^{\alpha \pi i/2} \end{pmatrix}
  \sigma_3 \Psi_-^{-1}(-y/4) \Psi_-(-x/4) \sigma_3
  \begin{pmatrix} e^{\alpha \pi i/2} \\ e^{-\alpha \pi i/2}  \end{pmatrix} \\
  = &
  \begin{pmatrix} e^{-\alpha \pi i/2} & e^{\alpha \pi i/2} \end{pmatrix}
   \Psi_-^{-1}(-y/4) \Psi_-(-x/4)
  \begin{pmatrix} - e^{\alpha \pi i/2} \\ e^{-\alpha \pi i/2}  \end{pmatrix} \\
  = &
  \begin{pmatrix} 1 & 1 \end{pmatrix}
  \begin{pmatrix} \frac{1}{2} H_{\alpha}^{(2)}(\sqrt{y}) & -\frac{1}{2} H_{\alpha}^{(1)}(\sqrt{y}) \\
  \frac{1}{2} \pi i \sqrt{y} \left( H_{\alpha}^{(2)}\right)'(\sqrt{y}) &
  - \frac{1}{2} \pi i \sqrt{y} \left(H_{\alpha}^{(1)}\right)'(\sqrt{y})
  \end{pmatrix}^{-1} \\
  & \times
  \begin{pmatrix} \frac{1}{2} H_{\alpha}^{(2)}(\sqrt{x}) & -\frac{1}{2} H_{\alpha}^{(1)}(\sqrt{x}) \\
  \frac{1}{2} \pi i \sqrt{x} \left( H_{\alpha}^{(2)}\right)'(\sqrt{x}) &
  -\frac{1}{2} \pi i \sqrt{x} \left(H_{\alpha}^{(1)}\right)'(\sqrt{x})
  \end{pmatrix}
  \begin{pmatrix} -1 \\ 1 \end{pmatrix}.
  \end{align*}
where for the last line we used the definition of $\Psi(\zeta)$ in terms of the
Hankel functions that is valid for $-\pi < \arg \zeta < - 2\pi/3$.
Since
\[ \frac{1}{2} \left( H_{\alpha}^{(1)} + H_{\alpha}^{(2)}\right) = J_{\alpha} \]
and since the above matrices with the Hankel functions have determinant one, it
follows that the above expression is equal to
\begin{multline*}
  \begin{pmatrix} - \pi i \sqrt{y} J_{\alpha}'(\sqrt{y}) & J_{\alpha}(\sqrt{y}) \end{pmatrix}
  \begin{pmatrix} - J_{\alpha}( \sqrt{x})  \\
  - \pi i \sqrt{x} J_{\alpha}'(\sqrt{x})
  \end{pmatrix} \\
  = \pi i \left( J_{\alpha}(\sqrt{x}) \sqrt{y} J_{\alpha}'(\sqrt{y}) -
    \sqrt{x} J_{\alpha}'(\sqrt{x}) J_{\alpha}(\sqrt{y}) \right).
    \end{multline*}
Using this in the expression for the scaling limit we obtain the theorem.
\end{proof}

\section{Appendix: approach via equilibrium measures} \label{appendix}

In the appendix we indicate an approach via equilibrium measures.
Our starting point is the RH problem for $X$, see Proposition \ref{prop:RHforX}. Instead of the
$\lambda$-functions that come from the Riemann surface we use the
so-called $g$-functions to make the second transformation of the
RH problem.

As an intermediate step we first define
\begin{equation} \label{Utildedef}
    \widetilde{U}(z) = X(z) \diag (1, e^{2n \sqrt{az}/t}, e^{-2n \sqrt{az}/t})
\end{equation}
with the usual principal branch of the square root function. Then
$\widetilde{U}$ satisfies the following RH problem.

\begin{enumerate}
\item $\widetilde U(z)$ is analytic in $\mathbb{C} \setminus (\R \cup
\Delta_2^{\pm})$.
\item $\widetilde U(z)$ possesses continuous
boundary values on $\R \cup \Delta_2^{\pm}$ denoted by $\widetilde
U_+$ and $\widetilde U_-$, and
\begin{equation}
\label{Utildejump1} \widetilde U_+(x)= \widetilde U_-(x)
    \left( I +
    x^\alpha e^{-n \left(\frac{x}{t(1-t)} - \frac{2 \sqrt{ax}}{t}\right)} E_{12} \right)
\quad x\in\R_+,
\end{equation}
\begin{equation}  \label{Utildejump2}
\widetilde U_+(x) = \widetilde U_-(x)
    \begin{pmatrix}
    1 & 0 & 0\\
        0 & 0                                & -|x|^{-\alpha} \\
        0 & |x|^{\alpha}                     & 0
    \end{pmatrix}, \quad x\in (-\infty,p_-),
\end{equation}
\begin{equation}  \label{Utildejump3}
\widetilde U_+(x)= \widetilde U_-(x)
    \begin{pmatrix}
    1 & 0 & 0\\
        0 & e^{4i n |ax|^{1/2}/t}            & 0 \\
        0 & |x|^{\alpha}                     & e^{-4i n |ax|^{1/2}/t}
    \end{pmatrix}, \quad x \in (p_-,0),
\end{equation}
\begin{equation} \label{Utildejump4}
    \widetilde U_+(z) = \widetilde U_-(z) \left( I + e^{\pm \alpha \pi i} z^{-\alpha} e^{-4 n (az)^{1/2}/t}  E_{23} \right)
\quad z \in \Delta_2^{\pm}.
\end{equation}
\item $\widetilde U(z)$ has the following behavior near infinity:
\begin{multline} \label{Utildeasymptotics}
   \widetilde U(z) =
   \left(I + \mathcal{O}\left(\frac{1}{z}\right) \right)
    \begin{pmatrix}
    1 & 0 & 0\\
    0 & z^{1/4} & 0 \\
    0 & 0 &  z^{-1/4}
    \end{pmatrix}
     \begin{pmatrix}
    1 & 0 & 0 \\
    0 & \frac{1}{\sqrt{2}} & \frac{1}{\sqrt{2}} i \\
    0 & \frac{1}{\sqrt{2}} i & \frac{1}{\sqrt{2}}
    \end{pmatrix}
    \begin{pmatrix}
    1 & 0 & 0 \\
    0 & z^{\alpha/2} & 0 \\
    0 & 0 & z^{-\alpha/2}
    \end{pmatrix} \\
\begin{pmatrix}
            z^{n}  & 0 & 0 \\
            0 & z^{-n/2}  & 0 \\
            0 & 0 & z^{-n/2}
            \end{pmatrix}, \quad z
        \to \infty, \quad z\in \mathbb{C} \setminus (\R \cup \Delta_2^{\pm}),
\end{multline}
\item $\widetilde U(z)$ has the same behavior as $X(z)$ at the origin,
see \eqref{Xedge2}.
\end{enumerate}

Now we consider the following variational problem for two measures
$\mu_1$ and $\mu_2$. Minimize
\begin{multline} \iint \log \frac{1}{|x-y|} d\mu_1(x) d\mu_1(y)
    - \iint \log \frac{1}{|x-y|} d\mu_1(x) d\mu_2(y) \\
    + \iint \log \frac{1}{|x-y|} d\mu_2(x) d\mu_2(y)
    + \int \left(\frac{x}{t(1-t)} - \frac{2\sqrt{ax}}{t}\right) d\mu_1(x)
\end{multline}
over all pairs $(\mu_1, \mu_2)$ such that
\begin{equation} \label{mu1mu2normalization}
\begin{aligned}
    \supp(\mu_1) & \subset [0, \infty), \qquad \int d\mu_1 = 1, \\
    \supp(\mu_2) & \subset (-\infty,0], \qquad \int d\mu_2 = 1/2,
    \end{aligned}
    \end{equation}
and
\begin{equation} \label{mu2constraint}
    \mu_2 \leq \sigma,
\end{equation}
where $\sigma$ is the (unbounded) measure on $(-\infty,0]$ with
density
\begin{equation} \label{sigmadef}
\frac{d\sigma}{dx} = \frac{\sqrt{a}}{\pi t} |x|^{-1/2},
    \quad x \in (-\infty,0].
    \end{equation}

It is possible to show that there is a unique minimizing pair
$(\mu_1, \mu_2)$. The measures are absolutely continuous with
respect to Lebesgue measure and their densities are related to the
functions $\zeta_1$ and $\zeta_3$ coming from the Riemann surface
as follows
\begin{equation}
    \begin{aligned}
    \frac{d \mu_1}{dx} & = - \frac{1}{2\pi i} \left( \zeta_{1+} - \zeta_{1-}\right), \\
    \frac{d \mu_2}{dx} & = \frac{d \sigma}{dx}
        + \frac{1}{2\pi i} \left( \zeta_{3+}(x) - \zeta_{3-}(x) \right).
    \end{aligned}
\end{equation}
Thus
\[ \supp(\mu_1) = \Delta_1, \qquad \supp(\mu_2) = (-\infty,0],
    \qquad \supp(\sigma-\mu_2) = \Delta_2, \]
and the constraint \eqref{mu2constraint} on $\mu_2$ is active only
in Case 2.

The following variational equalities and inequalities hold for
certain Lagrange multipliers $l_1$ and $l_2$:
\begin{equation} \label{variational1}
     2\int \log |x-s| d\mu_1(s) - \int \log |x-s| d\mu_2(s)
    - \frac{x}{t(1-t)} + \frac{2\sqrt{ax}}{t}
    \left\{ \begin{array}{cl} = l_1, & x \in \Delta_1, \\
    < l_1, & x \in \R_+ \setminus \Delta_1,
    \end{array} \right.
\end{equation}
\begin{equation} \label{variational2}
    2\int \log |x-y| d\mu_2(s) - \int \log |x-y| d\mu_1(s)
    \left\{ \begin{array}{cl} = l_2, & x \in \Delta_2, \\
    > l_2, & x \in \R_- \setminus \Delta_2.
    \end{array} \right.
\end{equation}
This is a vector equilibrium for the pair of measures $ \mu _1$ and $\mu_2$, supported on  $\R_+$ and
$\R_-$, respectively, with the matrix of interaction
$$
\begin{pmatrix}
2 & -1 \\ -1 & 2
\end{pmatrix},
$$
characteristic of a Nikishin system \cite{BL, NS} (see \cite{Apt} for a survey),
but with two additional features:
\begin{enumerate}
\item[(i)] there is an external field
$$
\varphi(x) =  \frac{x}{t(1-t)} - \frac{2\sqrt{ax}}{t}
$$
acting on $\R_+$, motivated by the varying character of the orthogonality weights in \eqref{weights2};
\item[(ii)] there is an upper constraint \eqref{mu2constraint} originated in the fact that
$w_2/w_1$ is the Cauchy transform of a discrete measure on $\R_-$, see \eqref{discreteCauchy}.
The upper constraint \eqref{sigmadef} is equal to the limiting distribution of the points
\eqref{Besselzeros} that are related to the positive zeros of the Bessel function $J_\alpha $.
\end{enumerate}

We introduce the $g$-functions
\begin{equation} \label{defgj}
    g_j(z) = \int \log(z-s) d\mu_j(s), \qquad j=1,2,
\end{equation}
and define the transformation
\begin{equation} \label{defUwithg}
    U(z) = C_n \diag \left(e^{-nl_1}, 1, e^{nl_2}\right) \widetilde U(z)
    \diag \left( e^{-n (g_1(z)- l_1)}, e^{n(g_1(z)- g_2(z)}, e^{n (g_2(z)-l_2)}\right)
\end{equation}
where $l_1$ and $l_2$ are the constants from \eqref{variational1}
and \eqref{variational2} and $C_n$ is a constant matrix (see the
first matrix in the right-hand side of \eqref{Udef}). Then $U$
satisfies a RH problem.
\begin{enumerate}
\item $U(z)$ is analytic in $\mathbb{C} \setminus (\R \cup
\Delta_2^{\pm})$. \item $U(z)$ possesses continuous boundary
values on $\R \cup \Delta_2^{\pm}$ denoted by $U_+$ and $U_-$, and
\begin{equation}
\label{Ualtjump1} U_+(x)= U_-(x)
    \begin{pmatrix}
    e^{-n(g_{1+}(x)-g_{1-}(x))} &
    x^\alpha e^{n \left(g_{1+}(x) + g_{1-}(x) - g_2(x)-\frac{x}{t(1-t)} + \frac{2 \sqrt{ax}}{t} - l_1\right)}& 0  \\
            0                    & e^{n(g_{1+}(x)-g_{1-}(x))}& 0 \\
            0                    & 0    & 1
    \end{pmatrix}, \quad x\in\R_+,
\end{equation}
\begin{equation}  \label{Ualtjump2}
U_+(x) = U_-(x)
    \begin{pmatrix}
    1 & 0 & 0\\
        0 & 0                                & -|x|^{-\alpha} \\
        0 & |x|^{\alpha}                     & 0
    \end{pmatrix}, \quad x\in (-\infty,p_-),
\end{equation}
\begin{equation}  \label{Ualtjump3}
U_+(x)= U_-(x)
    \begin{pmatrix}
    1 & 0 & 0\\
        0 & e^{4i n |ax|^{1/2}/t} e^{- n(g_{2+}(x)-g_{2-}(x))}           & 0 \\
        0 & |x|^{\alpha} e^{n(g_{1+}(x) - g_{2+}(x) - g_{2-}(x) + l_2)}  &
        e^{-4i n |ax|^{1/2}/t} e^{ n(g_{2+}(x)-g_{2-}(x))}
    \end{pmatrix}, \quad x \in (p_-,0),
\end{equation}
\begin{equation} \label{Ualtjump4}
    U_+(z) = U_-(z) \left( I + e^{\pm \alpha \pi i} z^{-\alpha} e^{-4 n (az)^{1/2}/t} e^{n(2g_2(z) - g_1(z)-l_2)}  E_{23} \right)
\quad z \in \Delta_2^{\pm}.
\end{equation}
\item $U(z)$ has the following behavior as $z
        \to \infty$, $z\in \mathbb{C} \setminus (\R \cup \Delta_2^{\pm})$:
\begin{multline} \label{Ualtasymptotics}
    U(z) =
   \left(I + \mathcal{O}\left(\frac{1}{z}\right) \right)
    \begin{pmatrix}
    1 & 0 & 0\\
    0 & z^{1/4} & 0 \\
    0 & 0 &  z^{-1/4}
    \end{pmatrix}
     \begin{pmatrix}
    1 & 0 & 0 \\
    0 & \frac{1}{\sqrt{2}} & \frac{1}{\sqrt{2}} i \\
    0 & \frac{1}{\sqrt{2}} i & \frac{1}{\sqrt{2}}
    \end{pmatrix}
    \begin{pmatrix}
    1 & 0 & 0 \\
    0 & z^{\alpha/2} & 0 \\
    0 & 0 & z^{-\alpha/2}
    \end{pmatrix}.
\end{multline}
\item $U(z)$ has the same behavior as $X(z)$ at the origin, see
\eqref{Xedge2}.
\end{enumerate}

Due to the equilibrium conditions we have that the jump
\eqref{Ualtjump1} simplifies on the interval $\Delta_1$ to
\begin{equation}  \label{Ualtjump1a}
U_+(x)= U_-(x)
    \begin{pmatrix}
    e^{-n(g_{1+}(x)-g_{1-}(x))} &
    x^\alpha & 0  \\
            0                    & e^{n(g_{1+}(x)-g_{1-}(x))}& 0 \\
            0                    & 0    & 1
    \end{pmatrix}, \quad x\in\Delta_1.
\end{equation}
A calculation that uses the fact that $\mu_2 = \sigma$ on $(p_-,0)$
shows that the diagonal entries of the jump matrix
\eqref{Ualtjump3} on $(p_-,0)$ are equal to $1$, so that
\begin{equation} \label{Ualtjump3a}
U_+(x)= U_-(x) \left( I + |x|^{\alpha} e^{n(g_{1+}(x) - g_{2+}(x) - g_{2-}(x) + l_2)} E_{32} \right)
\quad x \in (p_-,0),
\end{equation}
with an off-diagonal entry that is tending to $0$ as $n \to
\infty$. Of course the jump \eqref{Ualtjump3a} is only relevant in
Case 2.

We can then go on by opening a lens around $\Delta_1$ as discussed
in the main part of the text.

We conclude this appendix by giving the relation between the
$g$-functions and the $\lambda$-functions coming from the Riemann
surface. We have
\begin{align}
    g_1(z) & = \lambda_1(z) - \ell_1, \\
    g_1(z) - g_2(z) & = - \lambda_2(z) + \frac{z}{t(1-t)} - \frac{2\sqrt{az}}{t} + \ell_2, \\
    g_2(z) & = -\lambda_3(z) + \frac{z}{t(1-t)} + \frac{2 \sqrt{az}}{t} + \ell_3.
\end{align}
with constants $\ell_1$, $\ell_2$, and $\ell_3$ appearing in
\eqref{lambda1}--\eqref{lambda3}.
These relations and \eqref{lambda1}--\eqref{lambda3} show that
\begin{align}
g_1(z) & = \log z  -\frac{(1-t)(t+a(1-t))}{z}+
\mathcal{O} \left(\frac{1}{z^2} \right), \label{g1asymptotics} \\
g_2(z) & =  \frac{1}{2}\log z + \frac{t+4a(1-t)}{4\sqrt{az}}  -
\frac{(1-t)(t+a(1-t))}{2z} +\mathcal{O} \left(\frac{1}{z^{3/2}}
\right),\label{g2asymptotics}
\end{align}
as $z \to \infty$.

\section*{Acknowledgements}

ABJK is supported by FWO-Flanders project G.0455.04,
by K.U. Leuven research grant OT/04/21, by the Belgian Interuniversity
Attraction Pole P06/02, and by the  European Science Foundation Program MISGAM.

AMF is partially supported by Junta de Andaluc\'{\i}a, grants FQM-229, FQM-481, and P06-FQM-01738.

Additionally, ABJK and AMF are partially supported by the Ministry of Education and
Science of Spain, project code MTM2005-08648-C02-01.



\obeylines
\texttt{
A. B. J. Kuijlaars (arno.kuijlaars@wis.kuleuven.be)
Department of Mathematics
Katholieke Universiteit Leuven
Celestijnenlaan 200B
3001 Leuven, BELGIUM
\medskip
A. Mart\'{\i}nez-Finkelshtein (andrei@ual.es)
Department of Statistics and Applied Mathematics
University of Almer\'{\i}a, SPAIN, and
Instituto Carlos I de F\'{\i}sica Te\'{o}rica y Computacional
Granada University, SPAIN
\medskip
F. Wielonsky (Franck.Wielonsky@math.univ-lille1.fr)
Laboratoire de Math\'ematiques P. Painlev\'e
UMR CNRS 8524 - Bat.M2
Universit\'e des Sciences et Technologies Lille
F-59655 Villeneuve d'Ascq Cedex, FRANCE
}

\end{document}